\newcommand{\e}{\varepsilon}
\newcommand{\f}{\frac}
\newcommand{\im}{\implies}
\newcommand{\la}{\lambda}
\newcommand{\al}{\alpha}
\newcommand{\be}{\beta}
\newcommand{\p}{\partial}
\newcommand{\lra}{\longrightarrow}
\newcommand{\w}{\omega}
\newcommand{\Om}{\Omega}
\newcommand{\bu}{\mathbf{u}}
\newcommand{\C}{\mathbb{C}}
\newcommand{\K}{\mathbb{K}}
\newcommand{\bbP}{\mathbb{P}}
\newcommand{\Q}{\mathbb{Q}}
\newcommand{\R}{\mathbb{R}}
\newcommand{\N}{\mathbb{N}}
\newcommand{\cA}{\mathcal{A}}
\newcommand{\cB}{\mathcal{B}}
\newcommand{\cD}{\mathcal{D}}
\newcommand{\cF}{\mathcal{F}}
\newcommand{\cG}{\mathcal{G}}
\newcommand{\cH}{\mathcal{H}}
\newcommand{\cI}{\mathcal{I}}
\newcommand{\cJ}{\mathcal{J}}
\newcommand{\cK}{\mathcal{K}}
\newcommand{\cL}{\mathcal{L}}
\newcommand{\cN}{\mathcal{N}}
\newcommand{\cO}{\mathcal{O}}
\newcommand{\cP}{\mathcal{P}}
\newcommand{\cU}{\mathcal{U}}
\newcommand{\cV}{\mathcal{V}}
\newcommand{\sI}{\mathscr{I}}
\newcommand{\sM}{\mathscr{M}}
\newcommand{\bbR}{\mathbb{R}}
\newcommand{\bbC}{\mathbb{C}}
\newcommand{\fH}{\mathfrak{H}}
\newcommand{\fD}{\mathfrak{D}}
\newcommand{\tr}{\operatorname{tr}}
\newcommand{\T}{\operatorname{T}}
\newcommand{\sgn}{\operatorname{sign}}
\newcommand{\dist}{\operatorname{dist}}
\newcommand{\ran}{\operatorname{ran}}
\newcommand{\spec}{\operatorname{Spec}}
\newcommand{\esspec}{\operatorname{Spec_{\rm ess}}}
\newcommand{\dom}{\operatorname{dom}}
\newcommand{\spn}{\operatorname{span}}
\newcommand{\hatt}{\widehat}
\newcommand{\no}{\nonumber}
\DeclareMathOperator{\Mas}{Mas}
\newcommand{\de}[2]{\frac{d #1}{d #2 }}
\newtheorem{lemma}{Lemma}[section]
\newtheorem{prop}[lemma]{Proposition}
\newtheorem{theorem}[lemma]{Theorem}
\newtheorem{hypo}[lemma]{Hypothesis}
\theoremstyle{definition}
\newtheorem{rem}[lemma]{Remark}
\newlength\myheight  \settoheight\myheight{$\triangle$}
\crefname{rem}{Remark}{Remarks}
\crefname{conj}{Conjecture}{Conjectures}
\crefname{hypo}{Hypothesis}{Hypotheses}
\crefname{enumhypoi}{Hypothesis}{Hypotheses}
\crefname{theorem}{Theorem}{Theorems}
\crefname{cor}{Corollary}{Corollaries}
\crefname{prop}{Proposition}{Propositions}
\crefname{define}{Definition}{Definitions}
\definecolor{mitch}{rgb}{0.98,0.9,1}
\definecolor{skyblue}{rgb}{0.9,0.95,1}
\begin{document}
	\allowdisplaybreaks
	\title[]{Hadamard-type formulas for real eigenvalues of canonically symplectic operators}
	\author[M. Curran]{Mitchell Curran}
	\address{School of Mathematics and Statistics,
		The University of Sydney, Camperdown, NSW 2006, Australia}
	\email{mcur3925@sydney.edu.au}
	\author[S. Sukhtaiev]{Selim Sukhtaiev}
	\address{Department of Mathematics and Statistics,
		Auburn University, Auburn, AL 36849, USA}
	\email{szs0266@auburn.edu}
	\date{\today}
	\keywords{Eigenvalues, Maslov index, crossing forms, star graphs}
	\thanks{S.S. was
		supported in part by NSF grants DMS--2510344, DMS--2418900, Simons Foundation grant
		MP--TSM--00002897, grant no. $2024154$ from the U.S.-Israel Binational Science Foundation Jerusalem, Israel, and by the Office of the Vice President for
		Research \& Economic Development (OVPRED) at Auburn University through
		the Research Support Program grant.}

	\begin{abstract}
		We give first-order asymptotic expansions for the resolvent and Hadamard-type formulas for the eigenvalue curves of one-parameter families of canonically symplectic operators. We allow for parameter dependence in the boundary conditions, bounded perturbations and trace operators associated with each off-diagonal operator, and give formulas for derivatives of eigenvalue curves emanating from the discrete eigenvalue of the unperturbed operator in terms of Maslov crossing forms. We derive the Hadamard-type formulas using two different methods: via a symplectic resolvent difference formula and asymptotic expansions of the resolvent, and using Lyapunov-Schmidt reduction and the implicit function theorem. The latter approach facilitates derivative formulas when the eigenvalue curves are viewed as functions of the spectral parameter. We apply our abstract results to derive a spectral index theorem for the linearised operator associated with a standing wave in the nonlinear Schr\"odinger  equation on a compact star graph. 
	\end{abstract}
		\maketitle

	\parskip=0em
	\tableofcontents
	\setlength{\parskip}{3mm}
	\numberwithin{equation}{section}
	
	\section{Introduction}

Recently many works have used the Maslov index to study nonlinear PDEs. In this paper, we are motivated by two recent works \cite{CCLM23,LS20first}. The first discusses \emph{Hadamard-type} formulas for rates of change of eigenvalues of self-adjoint realizations of elliptic operators on bounded domains and abstract symmetric operators in Hilbert spaces \cite{LS20first}; the second discusses similar formulas for the rates of change of \emph{real} eigenvalues, with respect to perturbations of the domain, of a \emph{canonically symplectic} operator \cite{KKS04} arising in the stability analysis of standing waves in the NLS equation on a compact interval subject to Dirichlet boundary conditions \cite{CCLM23}. Here we extend the results of the latter by using the framework of the former: we study the variation of real eigenvalues of canonically symplectic operators where the off-diagonal blocks are self-adjoint extensions of densely-defined symmetric operators. Our results bridge the celebrated classical Rayleigh-Hadamard-Rellich theory for eigenvalue variation \cite{Rayleigh45,hadamard1908,Rellich69} with the Arnold-Maslov-Keller index theory from symplectic geometry \cite{Maslov65,A67,Keller58} for operators with the canonical symplectic structure. Since such operators are not self-adjoint, most of the monotonicity properties present in the self-adjoint setting are lost, leading to far richer spectral behaviour. Furthermore, as shown in \cite{CCLM23 } these \emph{local} formulas can provide crucial information in determining \emph{global} counts of eigenvalues. Before discussing our abstract setting and main results of the paper, we provide some examples, accompanied with numerical experiments, to motivate our analysis. 

\subsection{Motivating examples} Let $\cG$ be a star graph with $m$ edges, where each edge is assigned a positive length and a direction. Edge $i$ has length $\ell_i\in(0,\infty)$ and is represented by the interval $[0,\ell_i]$, where $0$ corresponds to the central vertex $\ell_i$ the free vertex. The Sobolev spaces of functions on $\cG$ are denoted by
\begin{align}
	&L^2(\cG)\coloneqq\bigoplus_{i=1}^m L^2(0,\ell_i), \qquad  \hatt{H}^k(\cG) \coloneqq \bigoplus_{i=1}^m H^k(0,\ell_i),\ k\in\N\no,
\end{align}  
where $H^k(0,\ell_i)$ is the standard $L^2$ based Sobolev space of order $k\in \N$ on the interval $(0,\ell_i)$. The boundary $\partial\cG$ is defined by
\begin{equation}
	\partial\cG\coloneqq \cup_{1\leq i \leq m} \{a_i,b_i\},
\end{equation}
where $a_i, b_i$ denote the end points of the $i$th edge. It is convenient to treat the $2m$ dimensional vectors representing the values of the functions at the endpoints of each edge as functions on the boundary $\partial\cG$, in particular, $L^2(\partial\cG)\cong \bbC^{2m}.$

On $\cG$ we consider the nonlinear Schr\"odinger equation with power nonlinearity:
\begin{equation}\label{NLS}
	i\Psi_t = \Psi_{xx} + |\Psi|^{2p} \Psi, \qquad x\in\cG, \quad t\in\R, \quad p>0,
\end{equation}
where $\Psi(x,t) = \left (\Psi_1(x,t), \dots, \Psi_m(x,t)\right )^\top\in\C^m$, $|\Psi|^{2p} \Psi \coloneqq\left (|\Psi_1|^{2p} \Psi_1, \dots, |\Psi_m|^{2p} \Psi_m\right )^\top$. A standing wave solution is given by
\begin{equation}\label{standingwave}
	\Phi(x,t) = e^{i\beta t}\phi(x), \qquad \beta \in \R, 
\end{equation}
where the wave profile $\phi(x) \coloneqq (\phi_1(x), \dots, \phi_m(x))^\top\in\R^m$ solves the standing wave equation (defined edge-wise)
\begin{equation}\label{SWE}
	\phi'' + \be \phi + \phi^{2p+1} = 0, \qquad \phi^{2p+1}\coloneqq(\phi_1^{2p+1}, \dots, \phi_m^{2p+1})^{\top}.
\end{equation} 
We let $\Phi$ satisfy Dirichlet conditions at the free vertices and $\delta$\emph{-type} conditions at the central vertex,
\begin{align}\label{vertex_conds_1}
	\begin{cases}
		&\phi_1(\ell_1) = \dots = \phi_m(\ell_m) =0, \\
		& \phi_1(0) = \dots = \phi_m(0), \\
		&\sum_{k=1}^{m} \phi_k'(0)= \al  \phi_1(0), \qquad \al\in\R,
	\end{cases}
\end{align}
and note that when $\al=0$ one has \emph{Neumann-Kirchhoff} conditions corresponding to current conservation at the central vertex. 

Assuming a solution to \eqref{SWE} -- \eqref{vertex_conds_1} exists\footnote{see \cref{rem:existence}}, linearising \eqref{NLS} about such a solution using a complex-valued perturbation that satisfies that same vertex conditions leads to the eigenvalue problem
\begin{equation}\label{1.6}
	(\cN + V)\mathbf{u} = \la  \mathbf{u}, \qquad \mathbf{u}=(u, v)^{\top}\in \dom \cN,
\end{equation}
where $\cN$ and $\cV$, acting in $L^2(\cG) \oplus L^2(\cG)$, are defined as follows
\begin{subequations}\label{Nops}
	\begin{align}
		\cN &= \begin{pmatrix}
			0 & -\cA \\
			\cA & 0
		\end{pmatrix}, \quad \begin{cases}
			\cA = -\p_{xx}, \quad \dom \cA  = \left \{ u \in \hatt{H}^2(\cG): \tr u \in \cL  \right \},  \label{Nop1}\\
			\dom \cN = \dom \cA\times \dom\cA = \{ \mathbf{u}\in \big(\hatt{H}^2(\cG)\big)^2: \T \mathbf{u} \in \cL\oplus \cL \},
		\end{cases} \\
		V &= \begin{pmatrix}
			0 & -F \\ G & 0
		\end{pmatrix}, \qquad
		\begin{cases}
			F = -\phi(x)^{2p} -\be, \\
			G= -(2p+1) \phi(x)^{2p} -\be.
		\end{cases}    \label{Nop2}
	\end{align}
\end{subequations}
In \eqref{Nop1} $\tr$ is a \emph{trace} operator on $\hatt{H}^2(\cG)$ that maps a function to the value of the function and its first derivative on the boundary $\p\cG$, 
\begin{equation}
	\tr : \hatt{H}^2(\cG) \to L^2(\partial\cG) \oplus L^2(\partial\cG), \quad \tr u = ( \Gamma_{0} u, \Gamma_{1} u)^\top,
\end{equation}
where, for $u = (u_{1}, \dots, u_{m})^\top \in  \hatt{H}^2(\cG)$,
\begin{align}
	\begin{split}\label{Gamma_01_NLS}
		\Gamma_0 &: \hatt{H}^2(\cG) \to L^2(\partial\cG), \qquad \Gamma_0 u = (u_{1}(0), \dots, u_{m}(0), u_{1}(\ell_1),\dots, u_{m}(\ell_m) )^\top, \\
		\Gamma_1 &: \hatt{H}^2(\cG) \to L^2(\partial\cG), \qquad \Gamma_1 u = (u_{1}'(0), \dots, u_{m}'(0), -u_{1}'(\ell_1),\dots, -u_{m}'(\ell_m) )^\top.
	\end{split}
\end{align} 
The operator $\T$ in \eqref{Nop1} is then the following trace operator on $\big(\hatt{H}^2(\cG)\big)^2$,
\begin{equation*}
	\T : \big(\hatt{H}^2(\cG)\big)^2 \to \big(L^2(\partial\cG) \big)^4, \quad \T \mathbf{u} \coloneqq \tr u \oplus \tr v = ( \Gamma_{0} u, \Gamma_{1} u, \Gamma_{0} v, \Gamma_{1} v)^\top, \quad \mathbf{u} = \begin{pmatrix}
		u \\ v
	\end{pmatrix}
	\in \big(\hatt{H}^2(\cG)\big)^2,
\end{equation*}
Finally, $\cL$ is a Lagrangian subspace of the symplectic boundary space $\bbC^{2m}\times \bbC^{2m}$ corresponding to the vertex conditions described by \eqref{vertex_conds_1}.

The operator $\cN$ is said to have the \emph{canonical symplectic structure} \cite{KKS04}. Its spectrum, consisting solely of eigenvalues due to the compactness of the domain, thus has four-fold symmetry in the complex plane. To determine the spectral stability of the standing wave, one must therefore determine the existence of any eigenvalues lying off the imaginary axis. While classical results \cite{GSS87,Grill88,GSS90} are relevant in the present context, see also \cite{Pel05,pelinovsky,KKS04,KKS05,KP12art}, here we opt for a symplectic approach, which is related to the study of \emph{Hadamard-type} variational formulas for parameter-dependent eigenvalues in an effort to further recent studies of the latter \cite{CCLM23,LS20first}. Namely, we consider the problem on a family of shrinking subdomains, parameterised by $t\in(0,1]$, and study the spectrum of the problem on the full domain when $t=1$ by analysing the $t$-dependent spectrum when $\la=0$ and $t\in(0,1]$. This approach has been used to study the spectra of ordinary and partial differential operators in many works, for example, \cite{Smale65,DJ11,JLM13,CJLS14,LSHad17}.

To that end, let  $\cG_t$, $t\in(0,1]$, be the star graph with edges $[0,t\ell_i]$. Restricting the eigenvalue problem for the operator \eqref{1.6}, \eqref{Nop1}, \eqref{Nop2} to $\cG_t$ and then rescaling back to $\cG$ yields the following $t$-dependent family of eigenvalue problems on $\cG$,
\begin{equation}\label{Nt_EVP_NLS}
	(\cN_t + V_t) \mathbf{u} = t^2 \la \mathbf{u}, \qquad \mathbf{u} \in \dom \cN_t,
\end{equation}
where
\begin{subequations}\label{Nt_stargraphs}
	\begin{align}
		\cN_t &= \begin{pmatrix}
			0 & -\cA_t \\
			\cA_t & 0
		\end{pmatrix}, \quad \begin{cases}
			\cA_t = -\p_{xx}, \quad \dom \cA_t  = \left \{ u \in \hatt{H}^2(\cG): \tr_t u \in \cL  \right \},  \label{Ntop1}\\
			\dom \cN = \dom \cA\times \dom\cA = \{ \mathbf{u}\in \big(\hatt{H}^2(\cG)\big)^2: \T_t \mathbf{u} \in \cL\oplus \cL \},
		\end{cases} \\
		V_t &= \begin{pmatrix}
			0 & -F_t \\ G_t & 0
		\end{pmatrix}, \qquad
		\begin{cases}
			F_t = -t^2\phi(tx)^{2p} -t^2\be, \\
			G_t = -t^2(2p+1) \phi(tx)^{2p} -t^2\be,
		\end{cases}    \label{Ntop2}
	\end{align}
\end{subequations}
and
\begin{subequations} \label{Tt_trt_NLS}
	\begin{align}
		\tr_t : &\hatt{H}^2(\cG) \to L^2(\partial\cG) \times L^2(\partial\cG), \quad &&\tr_t u \coloneqq \left ( \Gamma_{0} u, \f{1}{t}\Gamma_{1} u\right )^\top,   \\
		\T_t : &\big(\hatt{H}^2(\cG)\big)^2 \to \big(L^2(\partial\cG) \big)^4, \quad &&\T_t \mathbf{u} \coloneqq \tr_t u \oplus \tr_t u_2 = \left ( \Gamma_{0} u_1, \f{1}{t}\Gamma_{1} u_1, \Gamma_{0} u_2, \f{1}{t}\Gamma_{1} u_2\right )^\top. \label{Trace_t_NLS} 
	\end{align}
\end{subequations}

Before stating our main result relating to this example, we note that $\phi \in \ker(\cA +F)$, since by \eqref{SWE} we have
\begin{equation}
	(\cA+F) \phi = \phi''+\phi^{2p+1}+\be \phi =0,
\end{equation}
and because $\phi$ satisfies \eqref{vertex_conds_1} we have $\tr\phi = (\Gamma_0 \phi, \Gamma_1\phi)^{\top} \in\cL$, hence $\phi\in\dom(\cA)$. We make the assumption that zero is a simple eigenvalue of $\cA+F$; in the case of Neumann-Kirchhoff conditions (where $\al=0$), this assumption is generic with respect to the set of edge lengths $\{\ell_i\}$ \cite[Theorem 3.1.7]{berk}. We also assume invertibility of $\cA+G$. 
\begin{hypo}\label{hypo:NLS_simplicity}
	We assume that $\ker(\cA+F) = \spn \{\phi\}$ and $\ker(\cA+G) = \{0\}$.
\end{hypo}
It follows from the previous assumption that zero is a simple eigenvalue of $\cN+V$ with eigenfunction $\pmb{\phi} \coloneqq (0,\phi)^{\top}$. The real spectral index can then be deduced from an analysis of the \emph{eigenvalue curves}. The following theorem extends the main result of \cite{CCLM23} from the setting of a compact interval with Dirichlet boundary conditions to star graphs with $\delta$-type conditions on the central vertex and Dirichlet conditions at the free vertices. 

\begin{theorem}\label{thm:spectral_index_thm}
	Let $\cN_t+V_t$ be the operator defined by \eqref{Nt_stargraphs} for $t\in(0,1]$, associated with the standing wave solution \eqref{standingwave}--\eqref{vertex_conds_1} to \eqref{NLS}. (When $t=1$, we drop the subscript, i.e. $\cN \coloneqq \cN_1, V\coloneqq V_1$.) Under \cref{hypo:NLS_simplicity}, there exists a $C^2$ curve $t(\la)$, defined for $0<|\la|\ll 1$, satisfying
	\begin{equation}
		\la \in \spec\left (\cN_{t(\la)} + V_{t(\la)} \right ),
	\end{equation} 
	and such that $t(0)=1$ and $t'(0)=0$. Furthermore, if the coupling constant $\alpha\geq0$, cf. \eqref{vertex_conds_1}, and $t''(0) \neq 0$, then the number of positive real eigenvalues of $\cN$ is given by
	\begin{equation}\label{pos_eig_count}
		n_+(\cN) \geq | p_c - q_c - \mathfrak{c}|,
	\end{equation}
	where $p_c$ and $q_c$ are counts of \emph{conjugate points} defined as
	\begin{align}\label{pcqc}
		\hspace{3em} p_c \coloneqq \sum_{t_0\in(0,1)} \dim \ker(\cA + G_{t_0}), \qquad q_c \coloneqq \sum_{t_0\in(0,1)}  \dim \ker(\cA + F_{t_0}),
	\end{align}
	and $\mathfrak{c}$ is determined by the concavity of $t(\la)$ at $\la=0$ as follows,
	\begin{equation}\label{c_NLS}
		\mathfrak{c} = \begin{cases}
			+1  &t''(0) < 0, \\
			0  &t''(0) > 0. 
		\end{cases}
	\end{equation}
\end{theorem}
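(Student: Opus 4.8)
The plan is to reduce the spectral index count to a Maslov index calculation, and then to extract the local contribution at $\lambda=0$ from the Hadamard-type formula for the eigenvalue curve $t(\lambda)$ that the abstract part of the paper provides. First I would verify that \cref{hypo:NLS_simplicity} makes zero a simple eigenvalue of $\cN+V$ with eigenfunction $\pmb\phi=(0,\phi)^\top$: since $(\cA+F)\phi=0$ and $\phi\in\dom\cA$, one checks $(\cN+V)\pmb\phi = (-(\cA+F)\phi,\, (\cA+G)\cdot 0)^\top = 0$, and the kernel is exactly one-dimensional because $(\cN+V)(u,v)^\top=0$ forces $(\cA+F)v=0$ and $(\cA+G)u=0$, so $u=0$ and $v\in\spn\{\phi\}$. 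The existence of the $C^2$ curve $t(\lambda)$ with $t(0)=1$ and $t'(0)=0$ should follow from applying the abstract Lyapunov--Schmidt/implicit-function-theorem machinery (the ``latter approach'' advertised in the abstract) to the family \eqref{Nt_EVP_NLS}, viewing $t$ as the unknown and $\lambda$ as the spectral parameter; the vanishing of $t'(0)$ reflects the fact that the leading crossing form at the simple crossing $\lambda=0$ degenerates, which is precisely the canonically-symplectic (non-self-adjoint) phenomenon, so one must pass to second order, giving the $t''(0)$ dichotomy in \eqref{c_NLS}.

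Next I would set up the Maslov-index bookkeeping on the boundary of the parameter square. One tracks the path of Lagrangian planes obtained from solutions of $(\cN_t+V_t)\mathbf u = s\mathbf u$ (with $s$ running along $[0,t^2\lambda]$ or an analogous two-parameter homotopy), restricted against the fixed Lagrangian $\cL\oplus\cL$ encoding the $\delta$-type and Dirichlet vertex conditions. The homotopy invariance of the Maslov index around the closed boundary of the rectangle $\{(t,\lambda)\}$ (or $\{(t,s)\}$) equates the count of conjugate points accumulated along the $t$-axis at $\lambda=0$ — which by \eqref{pcqc} decomposes into the $p_c$ crossings coming from $\ker(\cA+G_{t_0})$ and the $q_c$ crossings coming from $\ker(\cA+F_{t_0})$, with opposite signs because of the off-diagonal structure of $\cN_t$ — with the count of real eigenvalues of $\cN$ picked up along the $t=1$ edge, plus the corner correction $\mathfrak c$ coming from how the curve $t(\lambda)$ leaves the corner $(t,\lambda)=(1,0)$. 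The sign of the monotonicity of the relevant crossing forms for $\cA+G_{t_0}$ versus $\cA+F_{t_0}$ is where the hypothesis $\alpha\ge 0$ enters: it guarantees that the Dirichlet-plus-$\delta$-type Lagrangian interacts with the conjugate-point crossings with a definite sign, so that the crossings are all regular and counted with consistent orientation, turning the signed Maslov identity into the absolute-value inequality \eqref{pos_eig_count}. The inequality (rather than equality) in \eqref{pos_eig_count} accounts for the possibility of additional eigenvalues not detected by this particular homotopy (e.g. complex eigenvalues, or real ones whose curves do not reach $t=1$ within the range of validity).

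The corner analysis is the technical heart. Near $(t,\lambda)=(1,0)$ the eigenvalue problem \eqref{Nt_EVP_NLS} has a one-dimensional kernel, and one does Lyapunov--Schmidt reduction to a scalar equation $g(t,\lambda)=0$ whose zero set is locally the curve $t(\lambda)$. Computing $\partial_t g(1,0)$, $\partial_\lambda g(1,0)$, and the relevant second derivatives in terms of the unperturbed eigenfunction $\pmb\phi$, the trace operators $\T_t$, the perturbations $V_t$, and the Maslov crossing form, one obtains $t'(0)=0$ and an explicit expression for $t''(0)$; its sign then determines whether, as $\lambda$ passes through $0$, the curve enters the region $t<1$ from the $\lambda>0$ side or the $\lambda<0$ side, which is exactly the $\pm 1$ / $0$ alternative in \eqref{c_NLS}. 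This step requires care with the $t$-dependent rescaling of the traces in \eqref{Tt_trt_NLS}: the factor $1/t$ in $\Gamma_1$ means the symplectic form itself is $t$-dependent, and one must differentiate through that, as well as through the $t^2\phi(tx)^{2p}$ dependence in $F_t,G_t$. I expect this corner computation — reconciling the abstract derivative formula, the $t$-dependent symplectic structure, and the sign conventions so that everything assembles into \eqref{pos_eig_count} — to be the main obstacle; the rest (simplicity of the zero eigenvalue, homotopy invariance of the Maslov index, the definition of conjugate points) is comparatively routine given the abstract results established earlier in the paper.
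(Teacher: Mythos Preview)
Your overall architecture matches the paper's: a Maslov-index homotopy on a rectangle in the $(\lambda,t)$-plane, conjugate-point counting on the $\lambda=0$ edge via the explicit $t$-crossing form, a lower bound for $n_+(\cN)$ from the $t=1$ edge, and a corner correction $\mathfrak c$ determined by the concavity of $t(\lambda)$. The Lyapunov--Schmidt part, and the reason $t'(0)=0$ (namely $\mathfrak m_{\lambda_0}=-2\langle u_{t_0},v_{t_0}\rangle$ vanishes because the eigenfunction is $(0,\phi)^\top$), are also correctly identified.

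There is, however, a genuine misidentification that would leave a gap. You attribute the hypothesis $\alpha\ge 0$ to the sign-definiteness of the crossing forms along the $\lambda=0$ edge ($\Gamma_2$). In fact, when you actually compute $\mathfrak m_{t_0}$ for this problem (combining $\langle\tau\dot V_{t_0}\mathbf u_{t_0},\mathbf u_{t_0}\rangle$ with the trace term $\Omega(\T_{t_0}\mathbf u_{t_0},\dot\T_{t_0}\mathbf u_{t_0})$), the $\alpha$-dependent contributions from the central vertex cancel exactly, and the result
\[
\mathfrak m_{t_0}(\mathbf q,\mathbf q)=\tfrac{1}{t_0^2}\sum_{i=1}^m \ell_i\big(v'_{t_0,i}(\ell_i)^2-u'_{t_0,i}(\ell_i)^2\big)
\]
depends only on the Dirichlet data at the free vertices. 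So $\alpha$ plays no role in the $\Gamma_2$ signature computation. Where $\alpha\ge 0$ \emph{is} needed is on the bottom edge $\Gamma_1$ ($t=\varepsilon_0$), which your proposal does not address at all: one must show there are no crossings there, and this uses $\alpha\ge 0$ to conclude $\spec(\cA)\subset(0,\infty)$, hence $\cA+F_{\varepsilon_0}$ and $\cA+G_{\varepsilon_0}$ are strictly positive for $\varepsilon_0$ small, whence $\spec(\cN+V_{\varepsilon_0})\cap\R=\emptyset$. You also do not mention the right edge $\Gamma_4$ ($\lambda=\lambda_\infty$), which is handled by the observation that $\cN_t+V_t$ is a bounded perturbation of a skew-adjoint operator, so its spectrum lies in a vertical strip of width $\|V_t\|$.

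A minor correction on the source of the inequality in \eqref{pos_eig_count}: it is not that some real eigenvalues fail to appear as crossings on $\Gamma_3$ (by definition they all do), but that the $\lambda$-crossing form $\mathfrak m_{\lambda_0}=-2\langle u_{t_0},v_{t_0}\rangle$ on $\Gamma_3$ is not sign-definite, so the Maslov index along $\Gamma_3$ only bounds $n_+(\cN)$ from below in absolute value.
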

\begin{rem}
The nondegeneracy condition $t''(0)\neq0$ is generic with respect to the parameters $\be,p,\phi_1(0), \phi_i'(0)$, $i=1, \dots,m-1$ in \eqref{SWE}--\eqref{vertex_conds_1}. (If $t''(0)=0$ then higher derivatives are needed to determine $\mathfrak{c}$ in \eqref{c_NLS}.) Regarding \eqref{pcqc}, the counts $p_c$ and $q_c$ of conjugate points may alternatively be characterised in terms of nontrivial intersections of Lagrangian planes, i.e.
\begin{equation}
	p_c = \{t\in(0,1) : \K_{0,t} \cap (\{0\} \times \cL) \neq \{0\} \}, \quad q_c = \{t\in(0,1) : \K_{0,t} \cap (\cL \times \{0\})\neq \{0\} \},
\end{equation}
where $\K_{\la,t}$ is the $t$- and $\la$-dependent \emph{Cauchy data plane} (for a precise definition see \cref{sec:Hadamard_crossingforms}).
\end{rem}

\begin{rem}\label{rem:t_greater_than_one}
We note that the $t$-dependent objects in \eqref{Nt_EVP_NLS}--\eqref{Trace_t_NLS} are well-defined for $t>1$ (which corresponds to stretching the graph $\cG$), hence we can extend the allowable $t$ values in \cref{thm:spectral_index_thm} to $(0,1+\e]$ for $\e>0$ small. Thus the eigenvalue curves are indeed well-defined for $1< t\leq 1+\e$ when $t''(0)>0$.
\end{rem}

\begin{rem}\label{corollary:VK}
A corollary of \cref{thm:spectral_index_thm} is the following \emph{Vakhitov-Kolokolov}-type (VK) criterion \cite{VK73,pelinovsky}, which furnishes a convenient numerical tool -- indeed one need only local data at $(\la,t)=(0,1)$ -- to establish the existence of a positive real eigenvalue. {\it	Suppose $p_c=1$ and $q_c=0$. Let $t(\la)$ be the eigenvalue curve, given in \cref{thm:spectral_index_thm}, which passes through $(\la,t)=(0,1)$ and is defined for $0<|\la|\ll 1$. Suppose that $t''(\la)|_{\la=0}\neq 0$. Then $\cN + V$ has a positive real eigenvalue if $t''(0) >0$, while $\spec(\cN + V) \subset i\R$ if $t''(0)<0$.} The first part of this assertion, that is, existence of unstable eigenvalues follows from Theorem \ref{thm:spectral_index_thm}. While the second part can be shown as in \cite{CCLM23,pelinovsky}.
\end{rem}

\begin{figure}
\centering
\hspace*{\fill}
\subcaptionbox{\label{}} 
{\includegraphics[width=0.35\textwidth]{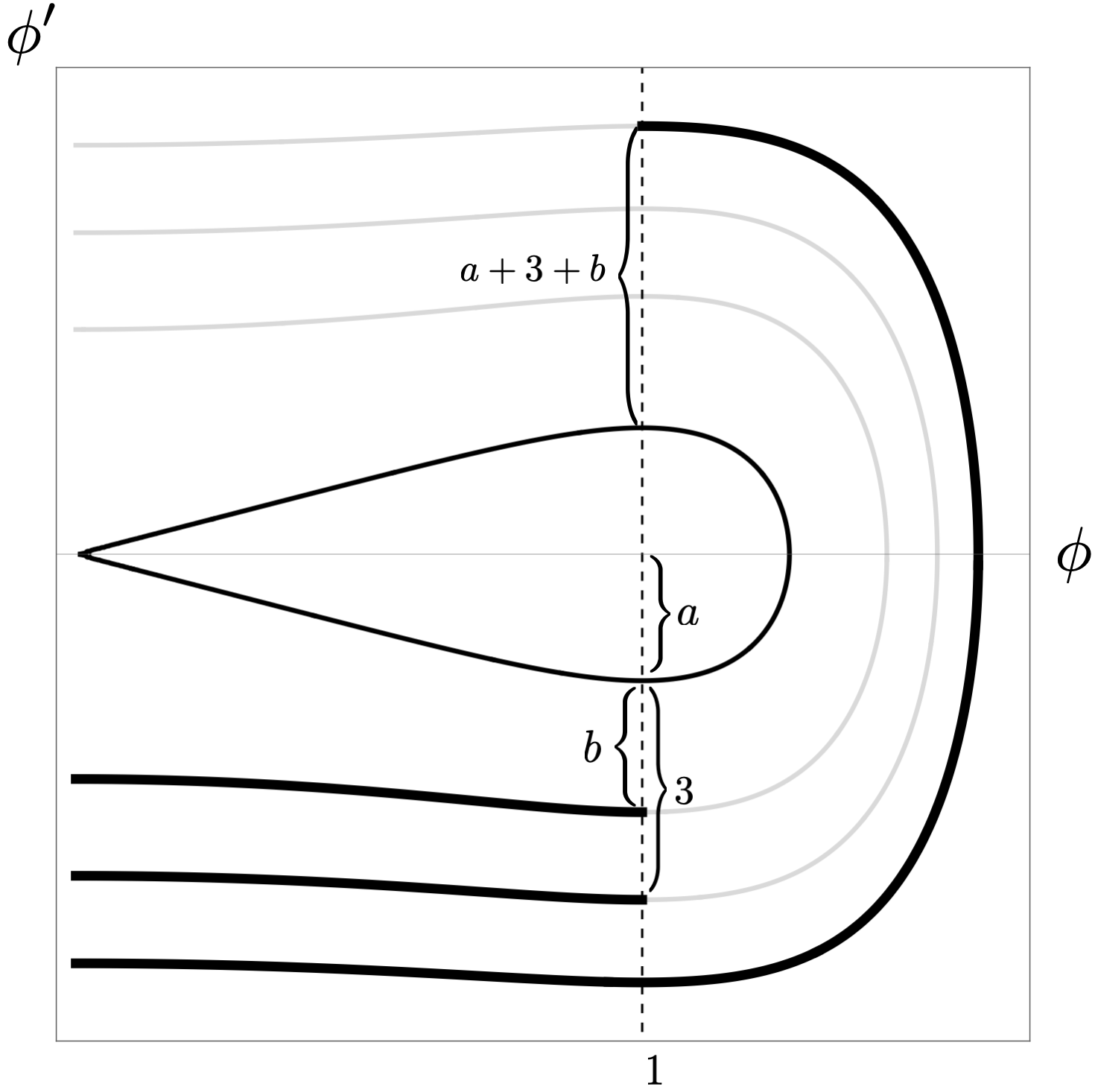}}\hfill 
\subcaptionbox{\label{}}
{\includegraphics[width=0.42\textwidth]{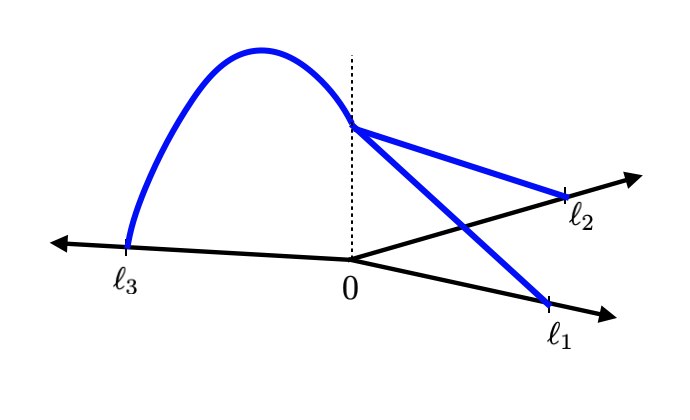}}
\hspace*{\fill} \\ \vspace{2mm}
\hspace*{\fill}
\subcaptionbox{$b=5$ \label{}} 
{\includegraphics[width=0.3\textwidth]{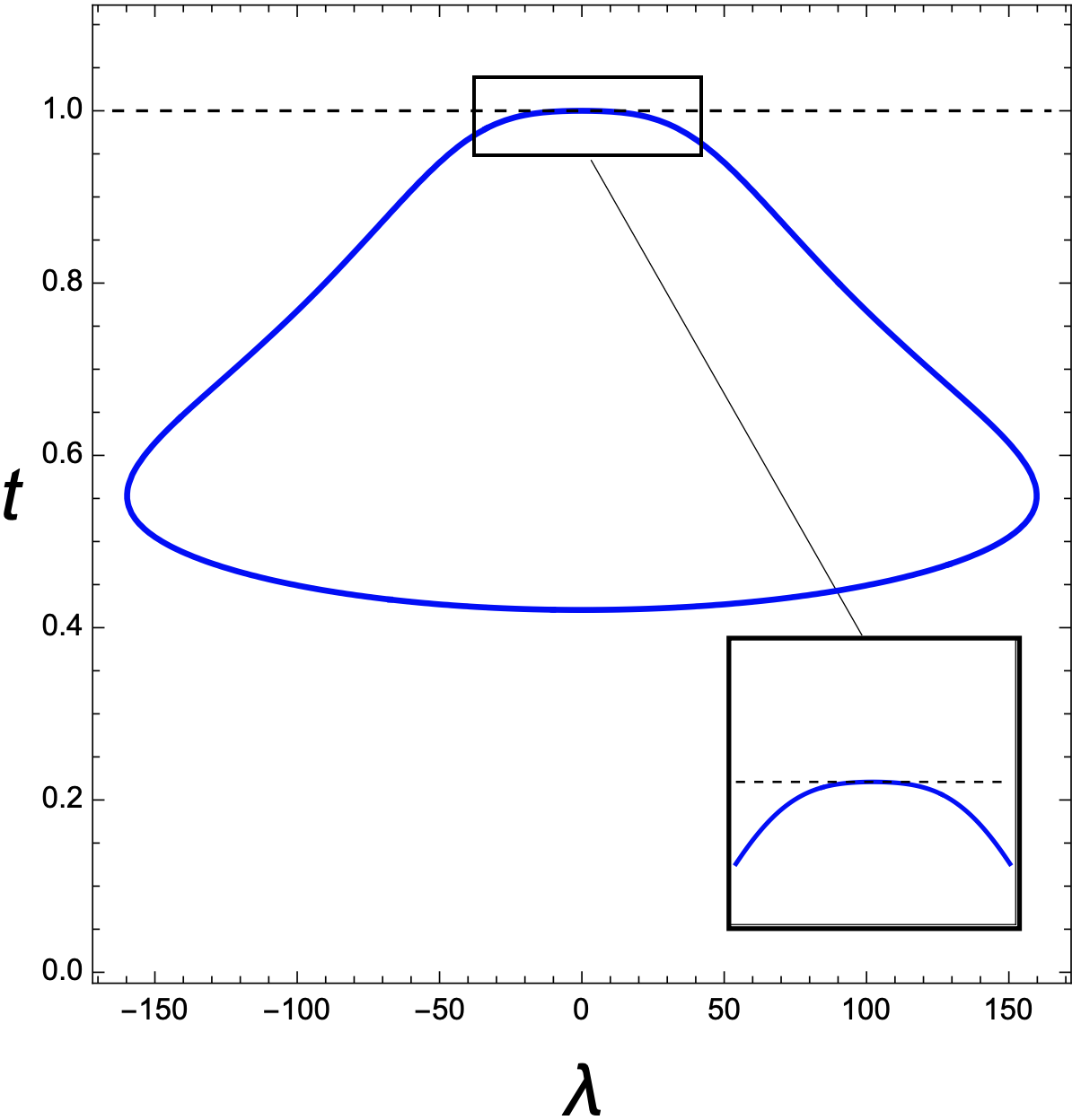}}\hfill 
\subcaptionbox{$b=3$ \label{}}
{\includegraphics[width=0.3\textwidth]{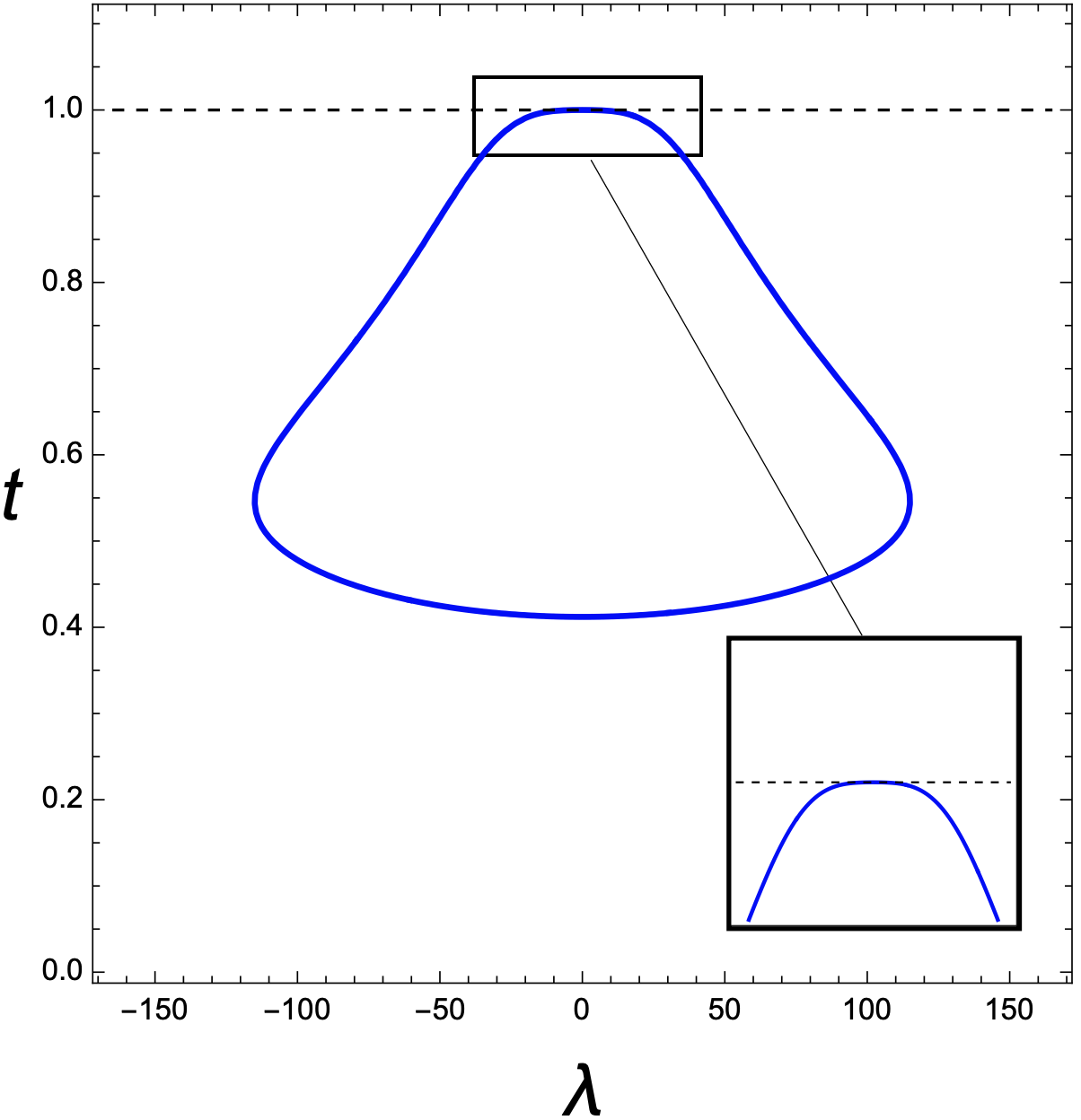}} \hfill
\subcaptionbox{$b=1$ \label{}}
{\includegraphics[width=0.3\textwidth]{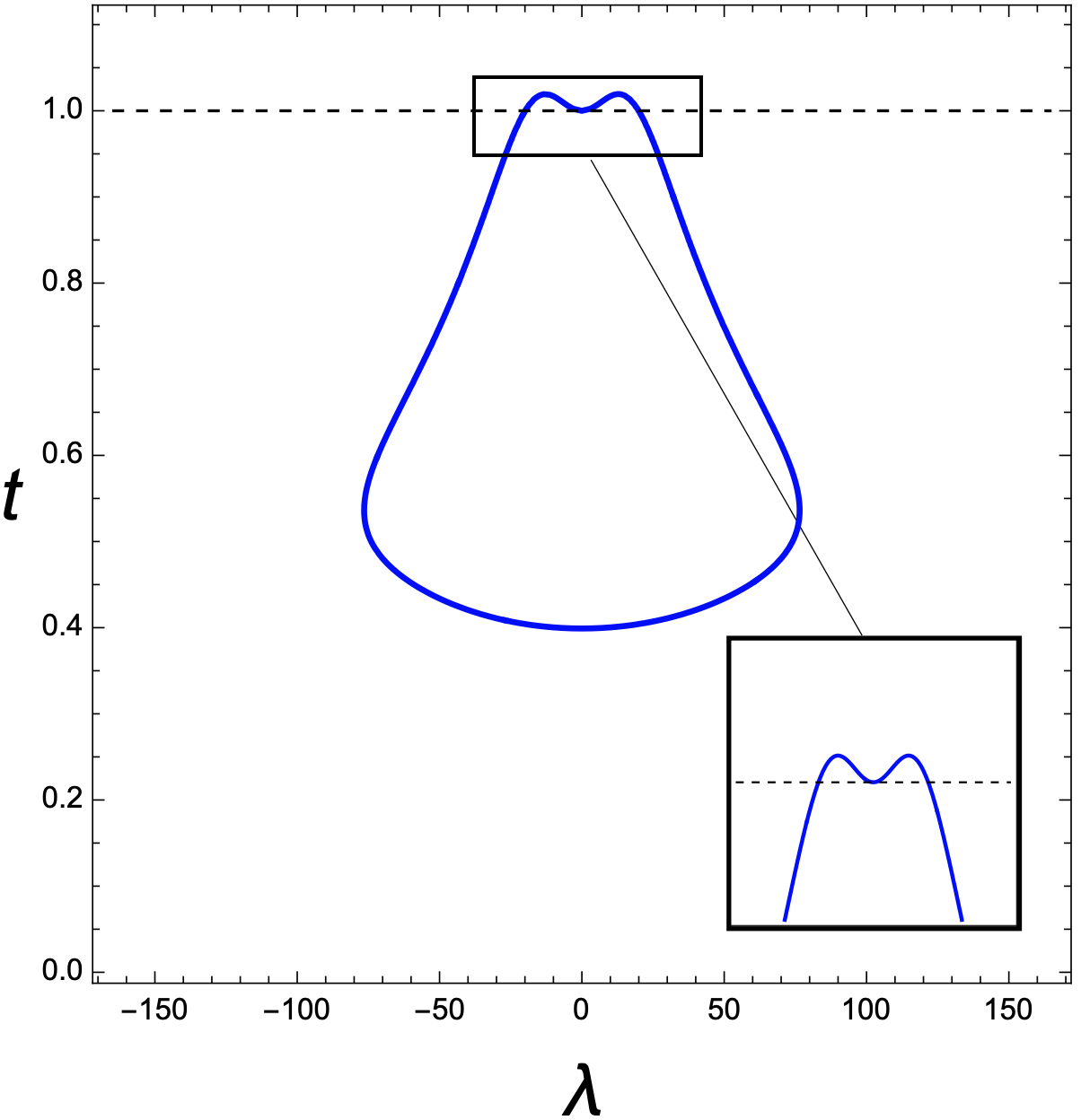}} 
\hspace*{\fill} 
\caption{(a) Phase curves (in bold) in the phase plane for \eqref{SWE} describing a standing wave solution $\phi = (\phi_1, \phi_2, \phi_3)^\top$ on a 3-star, where $\phi_1(0)=\phi_2(0)=\phi_3(0) = 1$ and $\phi_1'(0) = -a-b, \phi_2'(0) = -a-3,\phi_3'(0) = -\phi_1'(0) -\phi_2'(0) = 2a+3+b$, with $a\approx 0.8660$; (b) a schematic of the standing wave. In (c), (d) and (e) we give the eigenvalue curves for the standing wave solution described in (a) with $b$ as indicated. The standing waves all have one bump and two tails, and are non-negative on $\cG$.}
\label{fig:specral_pictures}
\end{figure}

In \cref{fig:specral_pictures} we give some numerical examples that highlight \cref{corollary:VK}. The figures showcase the \emph{real eigenvalue curves} in the $\la t$-plane, representing the pairs $(\la,t)$ such that there exists a nontrivial solution to \eqref{Nt_EVP_NLS}, i.e. so that $t^2\la$ is an eigenvalue of $\cN_t+V_t$, for three standing wave solutions to the power NLS equation \eqref{NLS} with $p=3$ and $\be=-1$ on the star graph with $m=3$ edges. The standing waves satisfy the following conditions at the central vertex:
\begin{align}
\begin{cases}
	\phi_1(0)=\phi_2(0)=\phi_3(0) = 1, \\
	\phi_1'(0) = -a-b, \,\,\phi_2'(0) =-a-3,\,\,\phi_3'(0) = -\phi_1'(0) -\phi_2'(0) = 2a + 3 + b,
\end{cases}
\end{align}
where $a\approx0.8660$ and $b= 5,3,1$ in \cref{fig:specral_pictures} (c), (d) and (e) respectively. These initial conditions are integrated forward on each edge under \eqref{SWE} until the first instance at which the functions are zero. Thus the waves satisfy \eqref{vertex_conds_1} with $\al=0$ (i.e. Neumann-Kirchhoff conditions), are non-negative on $\cG$ and all have one \emph{bump} (non-monotonic profile) and two \emph{tails} (monotonic profile) \cite{PavaGoloshch18,Kairzhan19,DKP22}. Integrating along the orbits in the phase plane associated with \eqref{SWE} shows that in all cases we have $\ell_2 \approx 0.262628$, while
\begin{align*}
&b=5 \im \ell_1 \approx 0.171588, \,\,\,\ell_3 \approx 0.399533, \\
&b=3 \im \ell_1 = \ell_2, \,\,\,\ell_3 \approx 0.467768, \\
&b=1 \im \ell_1 \approx 0.575249, \,\,\,\ell_3 \approx 0.573694,
\end{align*}
in (c), (d) and (e) respectively. The phase curves corresponding to the solutions on each edge, as well as a schematic of the standing wave, are given in (a) and (b), respectively, in \cref{fig:specral_pictures}.

Using a homotopy argument and \cite[Theorem 5.2.8]{berk}, one can show that $q_c = n_-(\cA+F) = 0$. Noting that the number of intersections of the eigenvalue curves with the line $\la=0$ for $t<1$ is the quantity $p_c+q_c$, \cref{fig:specral_pictures} (c), (d) and (e) therefore numerically verify that the three different standing waves described above all satisfy the condition of \cref{corollary:VK}. If the eigenvalue curves are continuous and there are no points of horizontal tangency for $t<1$ away from $\la=0$, it follows that the existence of a positive real eigenvalue on the full domain (given by intersections with $t=1$) may be predicted from the concavity of the eigenvalue curve through the point $(\la,t)=(0,1)$: if $t''(\la)|_{\la=0}>0$ then $\cN+V$ has a positive real eigenvalue, while if $t''(\la)|_{\la=0}<0$ then $\spec(\cN+V)\subset i\R$\footnote{It can be proven that there are no complex eigenvalues under the assumptions of \cref{corollary:VK}, see e.g. \cite{CCLM23,pelinovsky}}. \Cref{fig:specral_pictures} therefore reflects a change in stability of the underlying wave as $b$ decreases (and the edge lengths $\ell_1$ and $\ell_3$ increase): the concavity of the eigenvalue curve changes from negative to positive, indicating that a pair of imaginary eigenvalues have bifurcated onto the real axis.

Following the analysis in \cite{LS20first}, in this paper we also concern ourselves with variations in the boundary conditions and their effect on the spectrum of canonically symplectic operators. To this end, in \cref{fig:t_dep_BCs} we numerically compute the eigenvalue curves for three different operators $\cN_t +V_t$ given by \eqref{Nt_stargraphs}, but where the domain is a compact interval, i.e. $\dom A = H^2_0(0,\ell)$ and $\dom\cA_t = \left \{u\in H^2 : \tr_t u\in \cL_t \right \}$, and:
\begin{enumerate}
	\item $F_t = G_t=0$, $\ell=\pi$ and  
	$\cL_t$ is the Lagrangian plane corresponding to Dirichlet conditions at $x=0$ for all $t\in[0,2]$, and $t$-dependent boundary conditions at $x=\ell$ described by $\spn\left \{  \begin{bmatrix}
		\sin\left (\frac{\pi}{2} t\right ) \vspace{1mm} \\ - \cos\left (  \frac{\pi}{2} t \right )
	\end{bmatrix}  \right\} \subset \R^2$; thus $\cL_t$ corresponds to Dirichlet conditions at $x=0,\pi$ when $t=0,2$ and Dirichlet at $x=0$ and Neumann at $x=\pi$ when $t=1$;
	\item $F_t$ and $G_t$ are as in \eqref{Ntop2} with $p=3$ and $\beta=-2$, where $\phi$ is a $T$-periodic non-negative solution to \eqref{SWE} satisfying Dirichlet boundary conditions and $\ell = T/2\approx 3.28418$, and $\cL_t$ is the Lagrangian plane described in item (1) (with right endpoint $x\approx 3.28418$);
	\item The same as in item (2) but with $p=1$, $\ell = T/2 \approx 1.09868$ and $\cL_t$ now the Lagrangian plane with $t$-dependent boundary conditions at $x=0$ \emph{and} $x=\ell$ both described by $\spn\left \{  \begin{bmatrix}
		\sin\left (\frac{\pi}{2} t\right ) \vspace{1mm} \\ - \cos\left (  \frac{\pi}{2} t \right )
	\end{bmatrix}  \right\} \subset \R^2$; thus $\cL_t$ corresponds to Dirichlet conditions at $x=0,\pi$ when $t=0,2$ and Neumann conditions at $x=0,\pi$ when $t=1$.
\end{enumerate}
 In contrast to the self-adjoint case, the eigenvalue curves are not monotone, and it is therefore much more difficult to predict how many positive real eigenvalues exist when, for example, $t=1$, given the number of positive real eigenvalues when $t=0$. Moreover, such predictions become even more difficult given the following behaviour observed in all numerical examples: as the Lagrangian plane $\cL_t$ approaches the Dirichlet plane as $t\rightarrow 0^+$ or as $t\rightarrow 2^-$, a pair (possibly more) of real eigenvalues diverge to $\pm\infty$. In any case, in this paper it is our goal to understand such eigenvalue curves locally by computing expressions for their derivatives, by allowing for $t$-dependence in the boundary conditions in our abstract setting.

\begin{figure}
\centering
\hspace*{\fill}
\subcaptionbox{\label{}} 
{\includegraphics[width=0.3\textwidth]{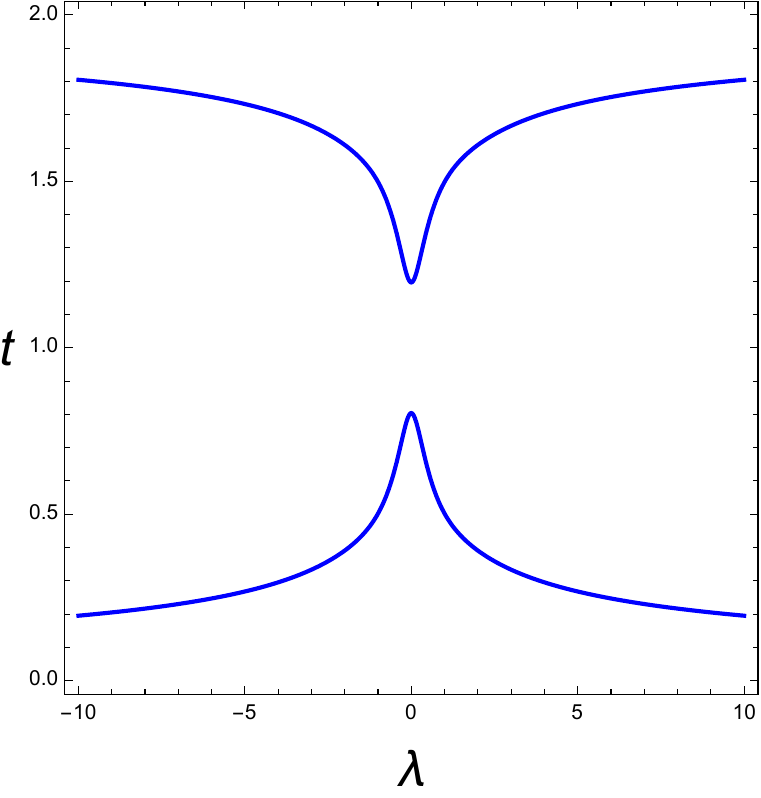}}
\subcaptionbox{\label{}}
{\includegraphics[width=0.3\textwidth]{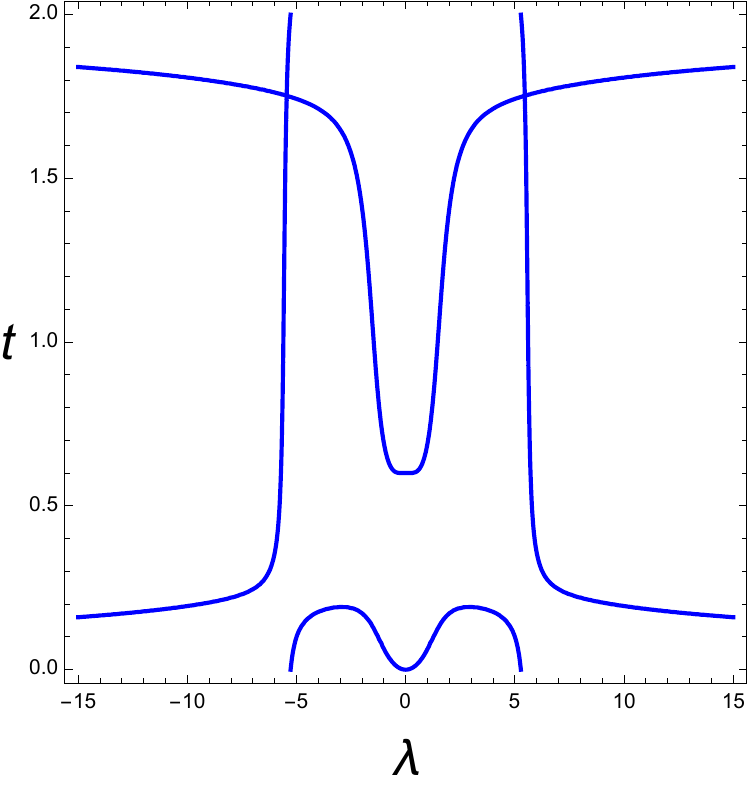}} \hfill
\subcaptionbox{\label{}}
{\includegraphics[width=0.3\textwidth]{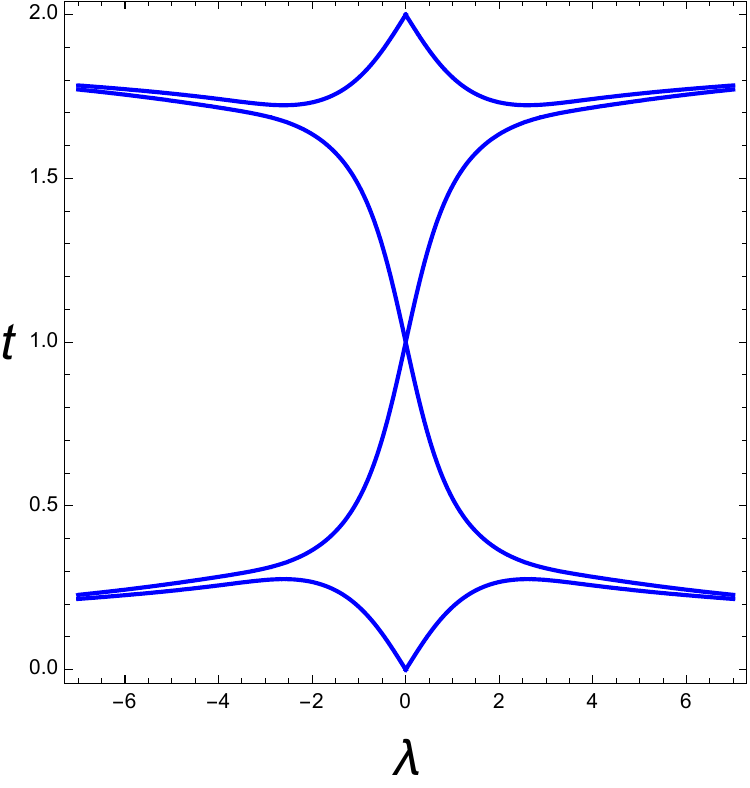}} 
\hspace*{\fill}  \vspace{2mm}
\caption{Eigenvalue curves for $\cN_t+V_t$ where (a), (b) and (c) correspond to items (1), (2) and (3) described in the text.}
\label{fig:t_dep_BCs}
\end{figure}

\subsection{Discussion of abstract setting and main results}

Motivated by the above examples and following the analyses in \cite{LS20first,CCLM23}, in this paper we derive Hadamard formulas for the real eigenvalue curves of canonically symplectic operators, i.e. off-diagonal block operators where the blocks are self-adjoint extensions of densely-defined symmetric operators with finite deficiency indices. Here, as in \cite{LS20first}, we allow for $t$-dependence in the bounded perturbation of the operator, in the trace operator, and in the Lagrangian planes describing the boundary conditions. We adopt two different approaches: on the one hand, following \cite{LS20first} we derive a symplectic resolvent difference formula which hinges on a modified Green's identity for the operators of interest. We use these formulas to obtain a first order asymptotic expansion of the resolvent, which, in turn, we use to derive an Hadamard formula for the eigenvalue curve $\la(t)$. On the other hand, following \cite{CCLM23} we use Lyapunov-Schmidt reduction and the implicit function theorem to arrive at the same Hadamard-type formula for  $\la(t)$, and to derive an additional formula for $t(\la)$. As outlined in the first example in the previous section, such a formula is necessary in order to determine global counts of eigenvalues. On a conceptual level, the main difference from the analysis in \cite{CCLM23} is that we now allow for $t$-dependence in the Lagrangian plane describing the boundary conditions (where such Lagrangian plane was the (fixed) Dirichlet plane in \cite{CCLM23}).

Let us succinctly describe the abstract setting and highlight the corresponding results. First, we fix a densely defined symmetric operator $A$ acting in a Hilbert space $\cH$ (e.g., $A=-\partial_{xx}$, $\cH=L^2(\cG)$ as above) and consider a $2\times 2$ block operator matrix acting in $\cH^2 \coloneqq \cH \oplus \cH$,
\begin{equation}
N : \dom(N) = \dom(A) \times \dom(A)\subseteq \cH^2\to\cH^2, \qquad N = \begin{pmatrix}
	0 & -A \\ A & 0
\end{pmatrix}.
\end{equation}
Given a trace map $\tr:\dom(A^*)\rightarrow \mathfrak H\times\mathfrak H$ corresponding to $A$, (e.g., \eqref{Gamma_01_NLS}) we first derive, see Proposition \ref{prop:T_trace}, a version of the Green's identity for the non-selfadjoint operator $N$, 
\begin{equation}
\left \langle (N^*-\lambda)\tau \mathbf{u}, \mathbf{v}\right \rangle_{\cH^2} - \left \langle \mathbf{u}, (N^*-\lambda)\tau\mathbf{v}\right \rangle_{\cH^2} = \Om(\T\mathbf{u}, \T\mathbf{v}),\ \lambda\in\bbR,
\end{equation}
where $\Omega=\omega\oplus(-\omega)$ with $\omega$ being the standard symplectic form associated with $A$, see \eqref{omega}, $T:=\tr\oplus\tr$, and $\tau:\cH^2\rightarrow\cH^2$ is the involution operator defined by $\tau(u,v):=(v,u)$, see \eqref{tau}. Using this version of Green's identity we first obtain a new resolvent difference formula for a pair of extensions of $N$, 
\begin{align}
R_1(\zeta) - R_2(\zeta) = \tau \left ( \T R_1(\bar{\zeta})\right )^* \Big ((P_1 J P_2)\oplus \left (-Q_1 J Q_2\right )\Big )\T R_2(\zeta), \label{resolvent_diff2_intro}
\end{align}
where for a pair $\cA_i, \cB_i$ of self-adjoint extensions of $A$ and a pair of orthogonal projections $P_i, Q_i\in \cB(\mathfrak H^2, \mathfrak H^2)$ related by  $\tr(\dom(\cA_i))=\ran P_i$, $\tr(\dom(\cB_i))=\ran Q_i$, we denote
\begin{equation}
R_i(\zeta):=(\cN_i-\zeta)^{-1},\ \ \zeta\not\in\spec(\cN_i),\ \   \cN_i := \begin{pmatrix}
	0 & -\cB_i \\ \cA_i & 0
\end{pmatrix},\  i=1,2.
\end{equation}

Our  second main result leverages \eqref{resolvent_diff2_intro} to provide a first-order asymptotic expansion for a one-parameter family of resolvents $t\mapsto R_t(\zeta):=(\cN_t+V_t-\zeta)^{-1}$, where $\cN_t$ is now determined by one-parameter families of self-adjoint extensions $\cA_t, \cB_t$ of $A$, trace maps $\tr_t$, orthogonal projections $P_t, Q_t$ all related via $\tr_t(\dom(\cA_t))=\ran P_t $, $\tr_t(\dom(\cB_t))=\ran Q_t$, and the bounded perturbation
\begin{equation}\label{V_potential}
\qquad V_t = \begin{pmatrix}
	0 & - G_t \\ F_t & 0
\end{pmatrix},
\end{equation}
where $F_t, G_t$ are bounded self-adjoint operators in $\cH$ (cf., e.g., \eqref{Ntop2}). Under natural assumptions on the one-parameter families listed above, see Theorem \ref{thm:resolvent_regularity} for exact statement, we obtain the following formula
\begin{align}
\begin{split}\label{Rt_asymp_exp_intro}
	R_t(\zeta) &\underset{t\to t_0}{=} R_{t_0}(\zeta) + \Big ( - R_{t_0}(\zeta) \dot V_{t_0}  R_{t_0}(\zeta) + \tau\left ( \T_{t_0} R_{t_0}(\bar\zeta)   \right )^*  \big (\dot P_{t_0} J \oplus  -\dot Q_{t_0} J\big ) \T_{t_0} R_{t_0}(\zeta)  \\
	& \qquad \qquad \qquad \qquad +\tau\left ( \T_{t_0} R_{t_0}(\bar\zeta)   \right )^*  \cJ \dot\T_{t_0} R_{t_0}(\zeta)\Big ) (t-t_0) +o(t-t_0) \quad \text {in}\,\, \cB(\cH^2).
\end{split}
\end{align}
which in turn yields the Hadamard-type formula for the eigenvalue curve $t\mapsto \lambda(t)$ bifurcating from a simple eigenvalue $\la_0\in\spec(\cN_{t_0}+V_{t_0})\cap\bbR$
\begin{equation}\label{Had_fmla_via_res_intro}
\la'(t_0) = \f{\langle \tau \dot{V}_{t_0} \mathbf{u}_{t_0}, 	\mathbf{u}_{t_0}\rangle_{\cH^2} + \Om \left (( \dot P_{t_0} \oplus \dot Q_{t_0} )\T_{t_0}\mathbf{u}_{t_0}, \T_{t_0}\mathbf{u}_{t_0}\right ) + \Om \left (\T_{t_0}\mathbf{u}_{t_0}, \dot{\T}_{t_0}\mathbf{u}_{t_0}\right )}{ \langle \tau \mathbf{u}_{t_0}, \mathbf{u}_{t_0}\rangle_{\cH^2}},
\end{equation}
where $\langle \tau \mathbf{u}_{t_0}, \mathbf{u}_{t_0}\rangle_{\cH^2}\neq 0$ follows from $\la_0\in\bbR$ and the structure of the canonically symplectic operators. 

Our third main result establishes the previous formula via a different method using Lyapunov-Schmidt reduction and gives an expression for $t'(\la)$ .  In particular, if $\la_0\in\bbR$ is an isolated eigenvalue of $\cN_{t_0}+V_{t_0}$ of geometric multiplicity $g$, we show in  \cref{prop:M} that the set of points $(\la,t)$ near $(\la_0,t_0)$ such that $\ker(\cN_t+V_t - \la) \neq \{0\}$ is given by the zero set of a $g\times g$ matrix $M(\la,t)$. We then compute Maslov crossing forms in \cref{lemma:cross_forms} for the path of Lagrangian planes $\Upsilon_{\la,t} \coloneqq \K_{\la,t}\oplus \cF_t$ on the intersection $\left (\K_{\la,t}\oplus \cF_t\right ) \cap \fD$, where $\K_{\la,t} \coloneqq \T_t(\ker((N^*+V_t^*-\la)\tau))$, $\cF_t \coloneqq \ran P_t \times \ran Q_t$ and $\fD$ is the diagonal subspace in $\fH^4\times \fH^4$ (cf. \eqref{cKcFcD}),
\begin{subequations}
\begin{align}
	\mathfrak{m}_{t_0}(\mathbf{q},\mathbf{q}) &= \langle \tau \dot{V}_{t_0} \mathbf{u}_{t_0}, \mathbf{u}_{t_0}\rangle_{\cH^2} + \Om \left (( \dot P_{t_0} \oplus \dot Q_{t_0} )\T_{t_0}\mathbf{u}_{t_0}, \T_{t_0}\mathbf{u}_{t_0}\right ) + \Om \left (\T_{t_0}\mathbf{u}_{t_0}, \dot{\T}_{t_0}\mathbf{u}_{t_0}\right ), \label{t_crossing_form_intro}\\
	\mathfrak{m}_{\la_0}(\mathbf{q},\mathbf{q}) &= -\langle \tau \mathbf{u}_{t_0}, \mathbf{u}_{t_0}\rangle_{\cH^2}. \label{la_crossing_form_intro} 
\end{align}	
\end{subequations}
Moreover, we show that in the geometrically simple case (when $g=1$), $t$ and $\la$ derivatives of $M$ are given precisely by 
\begin{align}
\la'(t_0) = - \f{\mathfrak{m}_{t_0}}{\mathfrak{m}_{\la_0}} \qquad \text{and} \qquad  t'(\la_0) = - \f{\mathfrak{m}_{\la_0}}{\mathfrak{m}_{t_0}} \,\,\,\text{if} \,\,\, \mathfrak{m}_{t_0}\neq 0.
\end{align}
We stress that first equation above together with \eqref{t_crossing_form_intro}, \eqref{la_crossing_form_intro} reaffirm the Hadamard-type formula \eqref{Had_fmla_via_res_intro}. 

The paper is organised as follows. In \cref{sec:2_setup}, we describe our abstract set-up and main assumptions, give the Green identity which underpins our analysis, and prove the new symplectic resolvent difference formula for canonically symplectic operators. In \cref{sec:resolvent_formulas_expansions} we use our resolvent difference formula to give a first order expansion of the resolvent, and use this to prove an Hadamard formula for eigenvalue curves $\la(t)$. In \cref{sec:Hadamard_crossingforms} we compute expressions for the Maslov crossing forms, and use Lyapunov--Schmidt reduction and the implicit function theorem to compute Hadamard formulas for the eigenvalue curves (namely for both $\la(t)$ and $t(\la)$). In \cref{sec:applications} we use our expressions for the crossing forms to prove \cref{thm:spectral_index_thm} for the example outlined in the introduction.

	\section{Set-up, symplectic resolvent difference formula and modified Green's identity}\label{sec:2_setup}
	Let $\mathcal{H}, \mathfrak{H}$ be separable Hilbert spaces, and let $A$ be a closed, densely defined, symmetric linear operator acting in $\cH$ with equal and finite defect indices,
	\begin{align}\label{defect_indices}
		\begin{split}
		\dim\ker(A^*+ i) = \dim\ker(A^*- i) < \infty.
		\end{split}
	\end{align}
	We denote the domain of the adjoint $A^*$ by $\cH_+\coloneqq\dom(A^*) \subset \cH$, which is a Hilbert space when equipped with the graph scalar product of $A^*$,
	\begin{equation}
		\langle u, v \rangle_{\cH_+} \coloneqq \langle u, v\rangle_{\cH} + \langle A^* u, A^* v\rangle_{\cH}.
	\end{equation}
	Throughout the rest of the paper we work under the following assumption. 
	\begin{hypo}\label{hypo:2.1_fundamental_hypo}
		We assume that $A$ is a densely defined, closed, symmetric linear operator acting in $\cH$ with equal and finite deficiency indices. Moreover, let   
		\begin{equation*}
			\tr\coloneqq (\Gamma_0,\Gamma_1)^\top :  \cH_+ \lra \mathfrak{H}\times \mathfrak{H},
		\end{equation*}
		be a bounded and surjective linear operator, called the \emph{trace operator}, which satisfies the following abstract Green's identity for all $u, v\in\dom(A^*)$,
		\begin{equation}\label{greens1}
			\langle A^*u, v\rangle_\cH - \langle u, A^*v \rangle_\cH = \langle \Gamma_1 u, \Gamma_0v\rangle_\fH -  \langle \Gamma_0 u, \Gamma_1 v\rangle_\fH.
		\end{equation}
	\end{hypo}
	Thus, under \cref{hypo:2.1_fundamental_hypo}, the triple $(\fH,\Gamma_0, \Gamma_1)$ is a \emph{boundary triplet} for the adjoint operator $A^*$. Consequently, by \cite[Proposition 14.5]{Behrndt_Hassi_deSnoo}, we have 
	\begin{equation}\label{fH_deficiency_indices}
		\dim \fH = \dim\ker(A^*\pm i).
	\end{equation}
	The right hand side of the identity \eqref{greens1} gives rise to a symplectic form $\w: \fH^2 \times \fH^2 \to \C$ defined by
	\begin{align}\label{omega}
		\begin{split}
		\w\left ((f_1, f_2)^\top, (g_1,g_2)^\top\right ) &\coloneqq \langle f_2, g_1\rangle_\fH - \langle f_1, g_2\rangle_\fH, \\
			&= \left\langle  J (f_1,g_1)^\top, (f_2,g_2)^\top   \right \rangle_{\fH\times \fH}, \qquad J = \begin{pmatrix}
				0 & I_\fH \\ -I_\fH & 0
			\end{pmatrix}.
			\end{split}
	\end{align}
	Thus \eqref{greens1} may be written as
	\begin{equation}\label{omegaGreen}
		\langle A^*u, v\rangle_\cH - \langle u, A^*v \rangle_\cH = \w(\tr u, \tr v).
	\end{equation}
	We denote the annihilator of a subspace $\cF \subset \fH \times \fH$ by
	\begin{equation}
		\cF^\circ \coloneqq \{ (f_1, f_2)^\top \in \fH \times \fH : \w\left ((f_1, f_2)^\top,(g_1, g_2)^\top\right )=0 \,\,\text{for all}\,\, (g_1, g_2)^\top\in\fH \times \fH\},
	\end{equation}
	and recall that $\cF$ is called \emph{Lagrangian} if $\cF = \cF^\circ$. We denote by $\Lambda(\fH\times \fH)$ the metric space of all Lagrangian subspaces of $\fH\times \fH$ equipped with the metric
	\begin{equation}
		d(\cF_1, \cF_2) \coloneqq \| Q_1 - Q_2\|_{\cB(\fH\times\fH)}, \quad \cF_1, \cF_2 \in \Lambda(\fH\times \fH),
	\end{equation}
	where $Q_j$ is the orthogonal projection onto $\cF_j$ acting in $\fH\times\fH, j=1,2$. Since the deficiency indices of $A$ are finite, there is a one-to-one correspondence between self-adjoint extensions of $A$ the the Lagrangian planes in $\Lambda(\mathfrak H\times \mathfrak H)$, which will be of great importance in the sequel. 

	We are ready to introduce the operator of interest. Our study takes place on the Hilbert spaces
	\[
	\cH^2 \coloneqq {\cH \times \cH}, \qquad \cH_+^2\coloneqq \cH_+\times \cH_+, \qquad \fH^4\coloneqq (\fH\times \fH) \times (\fH\times\fH).
	\]
	Associated with the minimal symmetric operator $A$ is the following closed, densely-defined minimal operator $N$ acting in $\cH^2$,
	\begin{equation}\label{N}
		N:\dom(N) = \dom(A)\times \dom(A)\subseteq \cH^2 \to \cH^2, \qquad N = \begin{pmatrix}
			0 & -A \\ A & 0
		\end{pmatrix}.
	\end{equation}
	The associated adjoint operator $N^*$ is given by 
	\begin{equation}\label{N*}
			N^*: \dom(N^*) = \cH_+^2 \subseteq \cH^2\to\cH^2, \qquad N^* = \begin{pmatrix}
				0 & A^* \\ -A^* & 0
			\end{pmatrix}.
	\end{equation} 
	As an aside, we note that $\cH_+^2$ is a Hilbert space which we equip the graph scalar product of $N^*$, 
	\begin{align}
		\begin{split}\label{inn_prod_H+}
			\langle \mathbf{u}, \mathbf{v} \rangle_{\cH_+^2} &\coloneqq \langle \mathbf{u}, \mathbf{v} \rangle_{\cH^2} + \langle N^* \mathbf{u}, N^* \mathbf{v}\rangle_{\cH^2}, \\
			&= \langle u_1, v_1\rangle_\cH + \langle u_2, v_2\rangle_\cH + \langle A^* u_2, A^* v_2 \rangle_\cH + \langle A^* u_1, A^* v_1 \rangle_\cH 
			\end{split}
	\end{align}
	where $\mathbf{u}=(u_1, u_2)^\top, \mathbf{v} = (v_1, v_2)^{\top} \in \cH_+^2$. 
	
	Let $\cA, \cB$ be self-adjoint extensions of the minimal symmetric operator $A$, i.e. $A\subset \cA, \cB \subset A^*$. The extensions of $N$ whose real spectra we wish to study will be denoted 
	\begin{equation}\label{cNt}
		\cN : \dom(\cN) = \dom(\cA) \times \dom(\cB)\subseteq \cH^2\to\cH^2, \qquad \cN =  \begin{pmatrix}
			0 & -\cB \\ \cA & 0
		\end{pmatrix}.
	\end{equation}
	On the product space $\fH^4\times \fH^4$ we define the following symplectic form,
	\begin{align}
		\begin{split}\label{Om}
			\Om : \fH^4&\times\fH^4\lra \C, \\
			\Om( (f_1,&f_2,f_3, f_4)^\top,(g_1,g_2,g_3,g_4)^\top) \coloneqq   
			\langle f_2 , g_1 \rangle_\fH - \langle f_1, g_2\rangle_\fH 
			- \langle f_4, g_3\rangle_\fH + \langle f_3, g_4\rangle_\fH, \\
			&= \left \langle \mathcal{J} (f_1,f_2,f_3, f_4)^\top,(g_1,g_2,g_3,g_4)^\top   \right \rangle_{\fH^4},\quad 
			\mathcal{J} =\begin{pmatrix}
				J & 0 \\
				0 & -J 
			\end{pmatrix} = J\oplus (-J),
		\end{split}
	\end{align}
	where $J$ is defined in \eqref{omega}. An important operator in our analysis will be the bounded involution
	\begin{align}\label{tau}
		&\tau: \cH^2 \to \cH^2, \quad \tau = \begin{pmatrix}
			0 & I_{\cH} \\ I_{\cH} & 0
		\end{pmatrix}, \quad \tau \begin{pmatrix}
			u_1 \\ u_2
		\end{pmatrix}= \begin{pmatrix}
			u_2 \\ u_1
		\end{pmatrix}.
	\end{align} 
	The operator
	\begin{equation}
		\tau N:\dom(N) = \dom(A)\times \dom(A)\subseteq \cH^2 \to \cH^2, \qquad \tau N = \begin{pmatrix}
			A & 0  \\   0 & -A
		\end{pmatrix},
	\end{equation}
	is then symmetric since $\tau N = A \oplus (-A) \subset A^*\oplus (-A^*) = (\tau N)^*$, while 
	\begin{equation}\label{tauNV}
		\tau \cN: \dom(\cA)\times \dom(\cB) \subseteq \cH^2 \to \cH^2, \quad \tau \cN = \begin{pmatrix}
			\cA & 0 \\ 0 & -\cB
		\end{pmatrix},
	\end{equation}
	is obviously self-adjoint. We note that taking the adjoint is equivalent to conjugating by $\tau$,
	\begin{equation}\label{N*tau}
		\tau \cN^* \tau = \cN \iff \tau \cN \tau = \cN^*.
	\end{equation} 
	 Under \cref{hypo:2.1_fundamental_hypo}, one can define a trace operator on the product space $\cH_+ \times \cH_+$ as follows. 
	\begin{prop}\label{prop:T_trace}
		Under \cref{hypo:2.1_fundamental_hypo}, the linear operator
		\begin{align}\label{T_define}
			\begin{split}
			&\T\coloneqq \tr \oplus \tr = [\Gamma_{0}, \Gamma_{1}]^\top \oplus [\Gamma_{0}, \Gamma_{1}]^\top : \cH_+^2 \lra \fH^4, \\
			&\T \mathbf{u} = \left [\tr u_1, \tr u_2 \right ]^\top = \left [\Gamma_{0}u_1, \Gamma_{1}u_1,\Gamma_{0}u_2, \Gamma_{1} u_2 \right ]^\top, \quad \mathbf{u} = (u_1, u_2)\in \cH_+^2,
			\end{split}
		\end{align}
		acting from $\cH_+^2$ into $\fH^4$ is bounded and surjective, and satisfies
		\begin{equation}\label{kerT_domN}
			\ker(\T) = \dom(N) = \dom(A) \times \dom(A).
		\end{equation}
		It is a \emph{trace} operator, in the sense that it satisfies the following abstract Green's identity associated with the maximal adjoint operator $N^*$ for functions $\mathbf{u}, \mathbf{v}\in \dom\left ( (\tau N)^*\right ) = \cH_+^2$,
		\begin{equation}\label{modified_greensOm}
			\left \langle (\tau N)^* \mathbf{u}, \mathbf{v}\right \rangle_{\cH^2} - \left \langle \mathbf{u}, (\tau N)^*\mathbf{v}\right \rangle_{\cH^2} = \Om(\T\mathbf{u}, \T\mathbf{v}).
		\end{equation}
		Moreover, for $\la\in\R$ we have
		\begin{equation}\label{modified_greens_real_la}
			\left \langle (N^* - \la) \tau \mathbf{u}, \mathbf{v}\right \rangle_{\cH^2} - \left \langle \mathbf{u}, (N^* - \la) \tau\mathbf{v}\right \rangle_\cH = \Om (\T \mathbf{u}, \T \mathbf{v}).
		\end{equation}

	\end{prop}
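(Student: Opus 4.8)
The plan is to verify the four assertions of \cref{prop:T_trace} in turn, each reducing (after unwinding the definitions) to the boundary-triplet structure of $(\fH,\Gamma_0,\Gamma_1)$ from \cref{hypo:2.1_fundamental_hypo} together with the elementary fact that $\tau N = A\oplus(-A)$ on $\dom(A)\times\dom(A)$, so that $(\tau N)^* = A^*\oplus(-A^*)$ with domain $\cH_+\times\cH_+ = \cH_+^2$. Boundedness and surjectivity of $\T = \tr\oplus\tr$ are inherited at once from those of $\tr$, since $\T$ is an orthogonal direct sum of two copies of $\tr$. For \eqref{kerT_domN}, I would observe that $\T\mathbf{u}=0$ is equivalent to $\tr u_1 = \tr u_2 = 0$; and since $(\fH,\Gamma_0,\Gamma_1)$ is a boundary triplet for $A^*$ one has $\ker\tr = \ker\Gamma_0\cap\ker\Gamma_1 = \dom(A)$ (the minimal operator being recovered as the restriction of $A^*$ to $\ker\tr$), whence $\ker\T = \dom(A)\times\dom(A) = \dom(N)$.

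For the Green identity \eqref{modified_greensOm}, I would expand $\langle(\tau N)^*\mathbf{u},\mathbf{v}\rangle_{\cH^2} - \langle\mathbf{u},(\tau N)^*\mathbf{v}\rangle_{\cH^2}$ componentwise using $(\tau N)^* = A^*\oplus(-A^*)$, and apply the scalar Green identity \eqref{omegaGreen} in each slot; this produces $\w(\tr u_1,\tr v_1) - \w(\tr u_2,\tr v_2)$. It then remains only to recognise this as $\Om(\T\mathbf{u},\T\mathbf{v})$: writing out the definition \eqref{Om} of $\Om$ on $\T\mathbf{u}=(\Gamma_0 u_1,\Gamma_1 u_1,\Gamma_0 u_2,\Gamma_1 u_2)^\top$ and the analogous $\T\mathbf{v}$, the first pair of terms of $\Om$ reproduces $\w(\tr u_1,\tr v_1)$ while the second pair reproduces $-\w(\tr u_2,\tr v_2)$, the minus sign being exactly the effect of the $-J$ block of $\cJ$ in \eqref{Om}. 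This matching is the only genuine computation in the proof and is routine given the way $\T$ and $\Om$ were set up.

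Finally, \eqref{modified_greens_real_la} will follow from \eqref{modified_greensOm}: a direct matrix multiplication using \eqref{N*} and \eqref{tau} gives $N^*\tau = (\tau N)^*$ on $\cH_+^2$, so $(N^*-\la)\tau = (\tau N)^* - \la\tau$; since $\tau$ is a bounded self-adjoint involution we have $\langle\tau\mathbf{u},\mathbf{v}\rangle_{\cH^2} = \langle\mathbf{u},\tau\mathbf{v}\rangle_{\cH^2}$, and because $\la\in\R$ the two copies of $\la\langle\tau\mathbf{u},\mathbf{v}\rangle_{\cH^2}$ cancel when the difference is formed, leaving precisely $\Om(\T\mathbf{u},\T\mathbf{v})$. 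I do not anticipate any real obstacle: the statement is essentially bookkeeping dictated by the definitions, and the only point requiring care is keeping the four boundary components and the sign of the $-J$ block of $\cJ$ straight when identifying $\Om(\T\cdot,\T\cdot)$ with $\w(\tr\cdot,\tr\cdot)-\w(\tr\cdot,\tr\cdot)$.
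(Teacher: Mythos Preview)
Your proposal is correct and follows essentially the same approach as the paper's proof, which is very terse: the paper simply notes that \eqref{modified_greensOm} follows from \eqref{omegaGreen}, that \eqref{kerT_domN} then follows because $\T$ is the trace map for the symmetric operator $\tau N$, and that \eqref{modified_greens_real_la} follows from \eqref{modified_greensOm} together with $\tau^*=\tau$ and $\la\in\R$. Your writeup is a more detailed unwinding of exactly these steps.
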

	
	\begin{proof}
		The Green's identity \eqref{modified_greensOm} follows from \eqref{omegaGreen}. Consequently \eqref{kerT_domN} follows from the fact that $\T$ is the trace map for symmetric operator $\tau N$ in the Hilbert space $\cH\oplus\cH$. The identity \eqref{modified_greens_real_la} follows from \eqref{modified_greensOm} and $\tau^*=\tau$, $\lambda\in\bbR$.
		\end{proof}
	
	In the classical setting of self-adjoint extensions $\cA$ of a symmetric operator $A$, the Green identities \eqref{greens1} for the adjoint operator $A^*$ facilitates the study of the eigenvalues of the self-adjoint restrictions of $A^*$ in terms of Lagrangian planes. In the present setting, one immediate impediment to such an approach with the canonical symplectic operator $N$ is that $N$ is not symmetric, i.e. a restriction of its adjoint. Nonetheless, ``untwisting" the operator via multiplication by $\tau$ reveals an operator that \emph{is} symmetric, and for that operator one has the modified Green's identity \eqref{modified_greensOm}. The restriction that $\la\in\R$ and the fact that $\tau$ is symmetric allows one to add in the extra terms involving $\la$ in \eqref{modified_greens_real_la}, and it is \emph{this} identity that furnishes a symplectic interpretation of the eigenvalues of the extensions \eqref{cNt} of the minimal operator $N$.

	Next, we write down a symplectic resolvent difference formula for the operator $\cN$. 
	\begin{theorem}\label{lemma:difference1}
		Assume \cref{hypo:2.1_fundamental_hypo}, and let  $\cA_1, \cA_2, \cB_1, \cB_2$ be self-adjoint extensions of $A$ with 
		\begin{equation}
			\tr (\dom \cA_i) = \ran P_i, \qquad  \tr (\dom \cB_i)=\ran Q_i, \qquad i=1,2.
		\end{equation}
		Define $\cN_i$ by	
	\begin{equation}\label{cNi}
		\cN_i : \dom(\cA_i)\times \dom(\cB_i) \subseteq \cH^2\to\cH^2, \qquad \cN_i = \begin{pmatrix}
			0 & -\cB_i \\ \cA_i & 0
		\end{pmatrix}, \qquad i=1,2.
	\end{equation}
	Let $\zeta \notin \spec(\cN_1) \cup \spec(\cN_2)$ and denote $R_i(\zeta) \coloneqq (\cN_i - \zeta)^{-1}, i=1,2$. Then $\bar\zeta \notin \spec(\cN_1) \cup \spec(\cN_2)$ and one has
		\begin{align}\label{resolvent_diff1}
			R_1(\zeta) - R_2(\zeta) = \tau\left ( \T R_1(\bar{\zeta})\right )^* \cJ \T R_2(\zeta),
		\end{align}
		and
		\begin{align}
			R_1(\zeta) - R_2(\zeta) = \tau \left ( \T R_1(\bar{\zeta})\right )^* \Big ((P_1 J P_2)\oplus \left (-Q_1 J Q_2\right )\Big )\T R_2(\zeta). \label{resolvent_diff2}
		\end{align}
	\end{theorem}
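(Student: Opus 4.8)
The plan is to verify both identities by testing the operator $R_1(\zeta)-R_2(\zeta)$ against arbitrary $f,g\in\cH^2$: since every operator appearing ($R_i(\zeta)$, $\T$, $\tau$, the projections) is bounded, it suffices to match the sesquilinear forms $\langle (R_1(\zeta)-R_2(\zeta))f,g\rangle_{\cH^2}$ with those of the claimed right-hand sides. First I would settle the auxiliary claim $\bar\zeta\notin\spec(\cN_1)\cup\spec(\cN_2)$: reading off \eqref{cNi} and using self-adjointness of $\cA_i,\cB_i$ one checks, exactly as in \eqref{N*tau}, that $\cN_i^*=\tau\cN_i\tau$, so $\cN_i$ is similar to $\cN_i^*$; hence $\spec(\cN_i)=\spec(\cN_i^*)=\overline{\spec(\cN_i)}$ is symmetric about $\bbR$, and $\zeta\notin\spec(\cN_i)$ forces $\bar\zeta\notin\spec(\cN_i)$.

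For the main computation, introduce the common test vector $\bv\coloneqq R_1(\bar\zeta)\tau g\in\dom(\cN_1)$ and, for $i=1,2$, $\bu_i\coloneqq R_i(\zeta)f\in\dom(\cN_i)$; all three lie in $\cH_+^2=\dom((\tau N)^*)$, and on $\dom(\cN_i)$ the self-adjoint operator $\tau\cN_i=\cA_i\oplus(-\cB_i)$ (cf.~\eqref{tauNV}) is the restriction of $(\tau N)^*$. From the resolvent equations one has $g=\tau(\cN_1-\bar\zeta)\bv=(\tau\cN_1)\bv-\bar\zeta\tau\bv$ and $(\tau\cN_i)\bu_i=\tau f+\zeta\tau\bu_i$; feeding these into the modified Green identity \eqref{modified_greensOm} of \cref{prop:T_trace} gives, after the $\zeta$-terms cancel by virtue of $\tau^*=\tau$ (so that $\langle\tau\bu_i,\bv\rangle=\langle\bu_i,\tau\bv\rangle$),
\[
\langle \bu_i,g\rangle_{\cH^2}=\langle\tau f,\bv\rangle_{\cH^2}-\Om(\T\bu_i,\T\bv),\qquad i=1,2.
\]
For $i=1$ the symplectic term vanishes: $\T\bu_1$ and $\T\bv$ both lie in $\ran P_1\times\ran Q_1$, and since $\cA_1,\cB_1$ are self-adjoint the planes $\ran P_1=\tr(\dom\cA_1)$ and $\ran Q_1=\tr(\dom\cB_1)$ are Lagrangian for $\w$, so $\Om=\w\oplus(-\w)$ annihilates that product. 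Subtracting the $i=1$ line from the $i=2$ line leaves $\langle(R_1(\zeta)-R_2(\zeta))f,g\rangle_{\cH^2}=\Om(\T R_2(\zeta)f,\T R_1(\bar\zeta)\tau g)$; rewriting $\Om(\cdot,\cdot)=\langle\cJ\cdot,\cdot\rangle_{\fH^4}$ and moving $\T R_1(\bar\zeta)$ and $\tau$ across the inner product (using $\tau^*=\tau$ once more) produces \eqref{resolvent_diff1}. Identity \eqref{resolvent_diff2} then follows by inserting projections: $\ran(\T R_2(\zeta))\subseteq\ran(P_2\oplus Q_2)$ and, dually, $(\T R_1(\bar\zeta))^*(P_1\oplus Q_1)=(\T R_1(\bar\zeta))^*$, together with $(P_1\oplus Q_1)\cJ(P_2\oplus Q_2)=(P_1JP_2)\oplus(-Q_1JQ_2)$ since $\cJ=J\oplus(-J)$.

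I expect the main obstacle to be organizational rather than conceptual: keeping track of which Lagrangian plane ($\ran P_i$ versus $\ran Q_i$, and for which $i$) each component of $\T\bu_i$ and $\T\bv$ lands in, checking the precise cancellation of the $\zeta$-dependent contributions — this is where the canonical symplectic structure, encoded by $\tau$, is genuinely used — and recognizing that $\Om(\T\bu_i,\T\bv)$ survives only for the ``mismatched'' pairing $i=2$, which it must, since that term is the entire content of the formula. Given the Green identity of \cref{prop:T_trace}, the remaining steps are careful but routine algebra.
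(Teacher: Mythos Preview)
Your proof is correct and follows essentially the same route as the paper: both arguments test the operator identity against arbitrary vectors in $\cH^2$, invoke the modified Green identity \eqref{modified_greensOm} for $(\tau N)^*$, and exploit $\tau^*=\tau$ together with $\tau\cN_i\subset(\tau N)^*$ so that the $\zeta$-dependent contributions cancel, leaving only the symplectic boundary term. The only organizational difference is that you apply Green's identity symmetrically to both $\bu_1$ and $\bu_2$ and kill the $i=1$ term via the Lagrangian property of $\ran P_1\times\ran Q_1$, whereas the paper handles the $R_1$-term directly via self-adjointness of $\tau\cN_1$ and reserves the Lagrangian property for \eqref{resolvent_diff2}; both variants are equally valid.
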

	\begin{proof}
		First, we note that $\bar\zeta \notin \spec(\cN_1) \cup \spec(\cN_2)$ follows from the four fold symmetry of the spectrum of $\cN_i$. Letting $\mathbf{u},\mathbf{v}\in\cH^2$ be arbitrary, we have, using that $\tau \cN_1$ is self-adjoint,
		\begin{align*}
			&\left \langle \tau R_1(\zeta)\mathbf{u} - \tau R_2(\zeta)\mathbf{u}, \mathbf{v} \right \rangle_{\cH^2} = \left \langle R_1(\zeta)\mathbf{u} - R_2(\zeta)\mathbf{u}, \tau (\cN_1 - \bar{\zeta}) R_1(\bar{\zeta})\mathbf{v} \right \rangle_{\cH^2}, \\
			&\qquad = \left \langle \tau (\cN_t  - {\zeta}) R_1(\zeta)\mathbf{u}, R_1(\bar{\zeta})\mathbf{v} \right \rangle_{\cH^2} 
			- \left \langle R_2(\zeta)\mathbf{u}, \tau \cN_t R_1(\bar{\zeta})\mathbf{v} \right \rangle_{\cH^2}
			+ \left \langle  \zeta \tau R_2(\zeta)\mathbf{u}, R_1(\bar{\zeta})\mathbf{v} \right \rangle_{\cH^2}, \\
			&\qquad = \left \langle \tau \mathbf{u}, R_1(\bar{\zeta})\mathbf{v} \right \rangle_{\cH^2}
			- \left \langle R_2(\zeta)\mathbf{u}, (\tau N)^* R_1(\bar{\zeta})\mathbf{v} \right \rangle_{\cH^2}
			+ \left \langle  \zeta \tau R_2(\zeta)\mathbf{u}, R_1(\bar{\zeta})\mathbf{v} \right \rangle_{\cH^2}, \\
			&\qquad = \left \langle \tau\mathbf{u}, R_1(\bar{\zeta})\mathbf{v} \right \rangle_{\cH^2}  
			- \left \langle (\tau  N)^* R_2(\zeta)\mathbf{u}, R_1(\bar{\zeta})\mathbf{v} \right \rangle_{\cH^2} 
			+ \left \langle \zeta \tau R_2(\zeta)\mathbf{u}, R_1(\bar{\zeta})\mathbf{v} \right \rangle_{\cH^2}
			+ \\
			& \qquad \qquad \qquad \qquad \Om(\T R_2(\zeta)\mathbf{u}, \T R_1(\bar{\zeta})\mathbf{v}), \\
			&\qquad = \left \langle \tau\mathbf{u}, R_1(\bar{\zeta})\mathbf{v} \right \rangle_{\cH^2}  
			- \left \langle \tau \left ( \cN_2 - {\zeta}\right ) R_2(\zeta)\mathbf{u}, R_1(\bar{\zeta})\mathbf{v} \right \rangle_{\cH^2} 
			+\left \langle \cJ \T R_2(\zeta)\mathbf{u}, \T R_1(\bar{\zeta})\mathbf{v} \right \rangle_{\fH^4},\\
			&\qquad =  \left \langle \left (\T R_1(\bar{\zeta})\right )^* \cJ \T R_2(\zeta)\mathbf{u}, \mathbf{v} \right \rangle_{\cH^2},
		\end{align*}
		where we used the modified Green's identity \eqref{modified_greensOm} in the fourth line, and that 
		\begin{equation}\label{}
			\tau\cN_1 R_1({\zeta}) = N^*\tau R_1({\zeta})
		\end{equation}
		because $\tau\cN_1 \subset (\tau N)^*$ and $\ran(R_1({\zeta})) \subset \dom \tau\cN_1$.  This yields \eqref{resolvent_diff1}. Next, since $R_1(\zeta) \mathbf{u} \in \dom(\cA_1)\times \dom(\cB_1)$, we have
		\begin{equation}\label{fact3}
			\T R_1(\zeta) \mathbf{u} = \left ( P_1 \oplus Q_1\right ) \T R_1(\zeta) \mathbf{u},
		\end{equation}
		where we have denoted
		\begin{equation}
			P_1 \oplus Q_1 = \begin{bmatrix}
				P_1 & 0 \\ 0 & Q_1
			\end{bmatrix}.
		\end{equation}
		Thus
		\begin{align*}
			&\left \langle \left (\T R_1(\bar{\zeta})\right )^* \cJ \T R_2(\zeta)\mathbf{u}, \mathbf{v} \right \rangle_{\cH^2} =  \left \langle \left (\left ( P_1 \oplus Q_1 \right )\T R_1(\bar{\zeta})\right )^* \cJ \left ( P_2 \oplus Q_2 \right ) \T R_2(\zeta)\mathbf{u}, \mathbf{v} \right \rangle_{\cH^2}, \\
			& \qquad \qquad \qquad = \left \langle \left (\T R_1(\bar{\zeta})\right )^* \left ( P_1 \oplus Q_1 \right )\cJ \left ( P_2 \oplus Q_2 \right ) \T R_2(\zeta)\mathbf{u}, \mathbf{v} \right \rangle_{\cH^2}, \\
			&\qquad \qquad \qquad = \left \langle \left (\T R_1(\bar{\zeta})\right )^* \Big( \left (P_1 J P_2\right ) \oplus  \left (-Q_1 J Q_2\right ) \Big) \T R_2(\zeta)\mathbf{u}, \mathbf{v} \right \rangle_{\cH^2},
		\end{align*}
		from which \eqref{resolvent_diff2} follows. 
	\end{proof}

	\section{First order asymptotic perturbation theory}\label{sec:resolvent_formulas_expansions}
	
	In this section, we introduce $t$-dependence of the various operators, working under the following assumptions.
	\begin{hypo}\label{hypo:QPGH_families}
		Let 
		\[
		\tr : [0,1] \to \cB(\cH_+,\fH\times \fH) : t\mapsto \tr_t \coloneqq[\Gamma_{0,t}, \Gamma_{1,t}]^\top
		\]
		be a one-parameter family of trace operators, such that $(\fH, \Gamma_{0,t},\Gamma_{1,t})$ is a boundary triplet for each $t\in[0,1]$ satisfying \cref{hypo:2.1_fundamental_hypo}. Let $P:[0,1] \to \cB(\fH^2), t\mapsto P_t$ and $Q:[0,1] \to \cB(\fH^2), t\mapsto Q_t$ be one-parameter families of orthogonal projections, such that $\ran Q_t, \ran P_t \in \Lambda(\fH^2)$ are Lagrangian planes for each $t\in[0,1]$. Let $A\subset \cA_t, \cB_t \subset A^*$ be families of self-adjoint extensions of $A$ satisfying 
		\begin{equation}\label{projectionsQP}
			\tr_t (\dom \cA_t) = \ran P_t, \qquad  \tr_t (\dom \cB_t)=\ran Q_t.
		\end{equation}
		Define the minimal canonical symplectic operator $N$ as in \eqref{N}, and the following one-parameter family of extensions of $N$,
		\begin{equation}
			\cN_t : \dom(\cN_t) = \dom(\cA_t) \times \dom(\cB_t)\subseteq \cH^2\to\cH^2, \qquad \cN_t = \begin{pmatrix}
				0 & -\cB_t \\ \cA_t & 0
			\end{pmatrix}.
		\end{equation}
		For 
		$
		\T\coloneqq \tr \oplus \tr : [0,1] \to \cB(\cH_+^2, \fH^4) : t\mapsto \T_t = \tr_t \oplus \tr_t = [\Gamma_{0,t}, \Gamma_{1,t}]^\top \oplus [\Gamma_{0,t}, \Gamma_{1,t}]^\top, 
		$ 
		we note note
		\begin{equation}\label{TtdomNt}
			\T_t(\dom \cN_t  ) = \T_t(\dom \cA_t \times\dom \cB_t ) = \ran P_t \times \ran Q_t. 
		\end{equation}
		Next, let $F:[0,1] \to \cB(\cH), t\mapsto F_t$ and $G:[0,1] \to \cB(\cH), t\mapsto G_t$ be families of bounded, self-adjoint operators acting in $\cH$. On the product space $\cH^2$, the mapping
		\begin{equation}
			V:[0,1] \to \cB(\cH^2), \qquad V_t = \begin{pmatrix}
				0 & - G_t \\ F_t & 0
			\end{pmatrix},
		\end{equation}
		is then a one-parameter family of bounded operators, which, similar to \eqref{N*tau} satisfies 
		\begin{equation}\label{V*tau}
			V_t^* = \tau V_t \tau \iff V_t = \tau V_t^* \tau.
		\end{equation}
		For each $t\in[0,1]$, we define the resolvent $R_t(\zeta) \coloneqq (\cN_t + V_t - \zeta)^{-1}\in\cB(\cH^2)$ for all $\zeta\notin \spec(\cN_t + V_t)$.
	\end{hypo}

	Our first task is to write down a resolvent difference formula for the resolvent
	\begin{equation}
		R_t(\zeta) = (\cN_t + V_t - \zeta)^{-1}, \qquad  \zeta \notin \spec(\cN_t + V_t),
	\end{equation}
	now incorporating the bounded potential $V_t$. 
	
		\begin{lemma}\label{lemma:difference2}
		Let $t,s,r\in[0,1]$ and $\zeta \notin \spec(\cN_t + V_t) \cup \spec(\cN_s + V_s)$. Then for $R_t(\zeta) \coloneqq (\cN_t + V_t -\zeta)^{-1}$ we have
		\begin{align}
			R_t(\zeta) &- R_s(\zeta) =  R_t({\zeta})(V_s -V_t ) R_s(\zeta)+ \tau \left ( \T_r R_t(\bar{\zeta})\right )^*\cJ \T_r R_s(\zeta),  \label{resolvent_diff1new}\\
			\begin{split}\label{resolvent_diff2new}
			&= R_t({\zeta})  (V_s -V_t ) R_s(\zeta) + \tau\left ( \T_t R_t(\bar{\zeta})\right )^* \Big ((P_t-P_s)J\oplus \left (-(Q_t-Q_s) J \right )\Big )\T_s R_s(\zeta) \\
			& \qquad \qquad \qquad \qquad \qquad \qquad \qquad +\tau \left ( \T_t R_t(\bar{\zeta})\right )^* \cJ(\T_t-\T_s) R_s(\zeta).
			\end{split}
		\end{align}
	\end{lemma}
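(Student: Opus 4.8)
The plan is to mirror the strategy used for Theorem \ref{lemma:difference1}, but now carrying along the bounded perturbations $V_t$ and allowing the trace operator, projections and extensions all to vary with the parameter. First I would establish \eqref{resolvent_diff1new}. The key identity we want to exploit is the modified Green's identity \eqref{modified_greensOm}, which holds for the $\tau$-untwisted operator $\tau N$ (independently of $t$, since $N$ is the fixed minimal operator). The idea is to pair $\tau R_t(\zeta)\mathbf{u} - \tau R_s(\zeta)\mathbf{u}$ against an arbitrary $\mathbf{v}\in\cH^2$, rewrite $\mathbf{v} = (\cN_t + V_t - \bar\zeta) R_t(\bar\zeta)\mathbf{v}$, and then shuffle the operators across the inner product. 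The new feature compared to \eqref{resolvent_diff1} is that $\cN_t + V_t$ is \emph{not} self-adjoint after twisting by $\tau$: we have $\tau(\cN_t + V_t)\tau = \cN_t^* + V_t^*$ using \eqref{N*tau} and \eqref{V*tau}, which is the right-hand endpoint of the resolvent rather than its adjoint-free form. So when I move $\tau(\cN_t+V_t)$ off $R_t(\zeta)\mathbf{u}$ and onto $R_s(\zeta)$-type terms, the potential contributes an extra term $\langle \tau(V_t - V_s)R_s(\zeta)\mathbf{u}, R_t(\bar\zeta)\mathbf{v}\rangle$, which after taking adjoints becomes the $R_t(\zeta)(V_s-V_t)R_s(\zeta)$ summand; the symplectic boundary term $\Om(\T_r R_s(\zeta)\mathbf{u}, \T_r R_t(\bar\zeta)\mathbf{v})$ appears exactly as before via \eqref{modified_greensOm}, since $\T_r$ is a valid trace operator for $\tau N$ for any fixed $r$ by Proposition \ref{prop:T_trace} (the index $r$ is free precisely because \eqref{modified_greensOm} does not see the boundary conditions). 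Rewriting $\Om(\cdot,\cdot) = \langle \cJ \cdot, \cdot\rangle_{\fH^4}$ and peeling off the adjoint of $\T_r R_t(\bar\zeta)$ gives \eqref{resolvent_diff1new}.

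For \eqref{resolvent_diff2new} I would specialise $r$ and insert the projections. Since $R_t(\zeta)\mathbf{u}\in\dom(\cA_t)\times\dom(\cB_t)$, the analogue of \eqref{fact3} gives $\T_t R_t(\zeta)\mathbf{u} = (P_t\oplus Q_t)\T_t R_t(\zeta)\mathbf{u}$, and similarly with $s$. The subtlety is that now the two factors carry \emph{different} trace operators $\T_t$ and $\T_s$, so I cannot directly write the boundary term using a single $\T_r$ and simultaneously project on both sides. The fix is to take $r=t$ in \eqref{resolvent_diff1new}, so the left factor is $(\T_t R_t(\bar\zeta))^*$, and then add and subtract: write $\cJ \T_t R_s(\zeta) = \cJ(\T_t - \T_s)R_s(\zeta) + \cJ\T_s R_s(\zeta)$. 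The first piece yields the last term of \eqref{resolvent_diff2new}. For the second piece, $\T_t R_t(\bar\zeta)\mathbf{v} = (P_t\oplus Q_t)\T_t R_t(\bar\zeta)\mathbf{v}$ and $\T_s R_s(\zeta)\mathbf{u} = (P_s\oplus Q_s)\T_s R_s(\zeta)\mathbf{u}$, so $(\T_t R_t(\bar\zeta))^*\cJ\T_s R_s(\zeta) = (\T_t R_t(\bar\zeta))^*(P_t\oplus Q_t)\cJ(P_s\oplus Q_s)\T_s R_s(\zeta)$, and using the block structure $\cJ = J\oplus(-J)$ this becomes $(\T_t R_t(\bar\zeta))^*\big((P_t J P_s)\oplus(-Q_t J Q_s)\big)\T_s R_s(\zeta)$. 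Finally, because each $\ran P_t$ is Lagrangian, $P_t J P_t = 0$ (and similarly for $Q$), so I can replace $P_t J P_s$ by $P_t J P_s - P_t J P_t = P_t J(P_s - P_t) = -P_t J(P_t - P_s)$; more symmetrically, one writes $P_t J P_s = (P_t - P_s)J P_s$ using $P_s J P_s = 0$, and this lets the term be absorbed — but to match the stated form $(P_t - P_s)J$ one applies the Lagrangian identity on the \emph{left} factor instead, noting $(\T_t R_t(\bar\zeta))^* P_t J P_s = (\T_t R_t(\bar\zeta))^* P_t(P_t - P_s)J\cdots$; I would just carefully track which of the two Lagrangian relations $P_tJP_t=0$ or $P_sJP_s=0$ to invoke so that the surviving factor is $(P_t - P_s)J$ acting on $\T_s R_s(\zeta)$, as written.

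The main obstacle I anticipate is purely bookkeeping: the twisting by $\tau$, the non-self-adjointness of $\cN_t + V_t$ (so one must be careful about which resolvent is evaluated at $\zeta$ versus $\bar\zeta$, and about distinguishing $\cN_t$ from $\cN_t^*$), and the three independent parameters $t,s,r$ make it easy to misplace a term or a sign. There is also a small point to check at the outset, as in Theorem \ref{lemma:difference1}: that $\bar\zeta\notin\spec(\cN_t+V_t)\cup\spec(\cN_s+V_s)$, which again follows from the fourfold symmetry of the spectrum of a canonically symplectic operator (here $\cN_t + V_t$ retains the structure because $V_t$ has the off-diagonal form \eqref{V_potential} with self-adjoint blocks, so $\tau(\cN_t+V_t)\tau = (\cN_t+V_t)^*$). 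Once the correct Green's identity manipulation in the first part is in place, the passage to \eqref{resolvent_diff2new} is essentially algebra with block-diagonal projections and the Lagrangian annihilation identities $P_tJP_t = Q_tJQ_t = 0$.
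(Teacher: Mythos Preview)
Your proposal is correct and follows essentially the same route as the paper's proof: compute $\langle \tau R_t(\zeta)\mathbf{u} - \tau R_s(\zeta)\mathbf{u}, \mathbf{v}\rangle$ using that $\tau(\cN_t+V_t)$ is \emph{self-adjoint} (your phrasing ``not self-adjoint after twisting by $\tau$'' is backwards---the identity $\tau(\cN_t+V_t)\tau=(\cN_t+V_t)^*$ is precisely the statement that $\tau(\cN_t+V_t)$ is self-adjoint, and this is what lets you move the operator across the inner product), apply \eqref{modified_greensOm} with the free trace $\T_r$, and then for \eqref{resolvent_diff2new} set $r=t$, split $\T_t=\T_s+(\T_t-\T_s)$, and insert the projections. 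For the last algebraic step your first instinct was right: use $P_sJP_s=0$ to write $P_tJP_s=(P_t-P_s)JP_s$, and then the trailing $P_s$ is absorbed by $\T_sR_s(\zeta)=(P_s\oplus Q_s)\T_sR_s(\zeta)$, giving $(P_t-P_s)J\,\T_sR_s(\zeta)$ directly---no need to switch to the left factor.
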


	\begin{proof}
		Let $\mathbf{u},\mathbf{v}\in\cH^2$ be arbitrary. We have, using that $\tau(\cN_t+V_t)$ is self-adjoint,
		\begin{align*}
			&\left \langle \tau R_t(\zeta)\mathbf{u} - \tau R_s(\zeta)\mathbf{u}, \mathbf{v} \right \rangle_{\cH^2} = \left \langle R_t(\zeta)\mathbf{u} - R_s(\zeta)\mathbf{u}, \tau (\cN_t + V_t - \bar{\zeta}) R_t(\bar{\zeta})\mathbf{v} \right \rangle_{\cH^2}, \\
			&\qquad = \left \langle \tau (\cN_t + V_t - {\zeta}) R_t(\zeta)\mathbf{u}, R_t(\bar{\zeta})\mathbf{v} \right \rangle_{\cH^2}
			- \left \langle R_s(\zeta)\mathbf{u}, \tau (\cN_t + V_t - \bar{\zeta}) R_t(\bar{\zeta})\mathbf{v} \right \rangle_{\cH^2}, \\
			&\qquad= \left \langle \tau\mathbf{u}, R_t(\bar{\zeta})\mathbf{v} \right \rangle_{\cH^2} 
			+ \left \langle R_s(\zeta)\mathbf{u},  \tau(V_s -V_t ) R_t(\bar{\zeta})\mathbf{v} \right \rangle_{\cH^2} 
			- \left \langle R_s(\zeta)\mathbf{u}, ((\tau N + \tau V_s)^* - \tau\bar{\zeta}) R_t(\bar{\zeta})\mathbf{v} \right \rangle_{\cH^2}, \\
			&\qquad= \left \langle \tau\mathbf{u}, R_t(\bar{\zeta})\mathbf{v} \right \rangle_{\cH^2} 
			+ \left \langle R_s(\zeta)\mathbf{u},  \tau (V_s -V_t ) R_t(\bar{\zeta})\mathbf{v} \right \rangle_{\cH^2} \\ 
			&\qquad \qquad - \left \langle ((\tau N + \tau V_s)^* - \tau{\zeta}) R_s(\zeta)\mathbf{u}, R_t(\bar{\zeta})\mathbf{v} \right \rangle_{\cH^2} 
			+ \Om(\T_r R_s(\zeta)\mathbf{u}, \T_r R_t(\bar{\zeta})\mathbf{v}),  \\
			&\qquad= \left \langle \tau\mathbf{u}, R_t(\bar{\zeta})\mathbf{v} \right \rangle_{\cH^2} 
			+ \left \langle R_s(\zeta)\mathbf{u},  \tau (V_s -V_t ) \tau^2 R_t(\bar{\zeta})\mathbf{v} \right \rangle_{\cH^2}, \\ 
			&\qquad \qquad - \left \langle \tau ( \cN_s + V_s - {\zeta}) R_s(\zeta)\mathbf{u}, R_t(\bar{\zeta})\mathbf{v} \right \rangle_{\cH^2} 
			+ \left \langle \cJ \T_r R_s(\zeta)\mathbf{u}, \T_r R_t(\bar{\zeta})\mathbf{v} \right \rangle_{\fH^4},  \\
			&\qquad =  \left \langle \tau R_t(\zeta) \tau (V_s -V_t )^* \tau R_s(\zeta)\mathbf{u}, \mathbf{v} \right \rangle_{\cH^2}
			+ \left \langle \left (\T_r R_t(\bar{\zeta})\right )^*\cJ \T_r R_s(\zeta)\mathbf{u}, \mathbf{v} \right \rangle_{\cH^2},
		\end{align*}
		which, using \eqref{V*tau}, proves \eqref{resolvent_diff1new}. (Note that we used \eqref{modified_greensOm} with trace operator $\T_r$.) 
		
		Next, using \eqref{fact3} and that $P_s J P_s = Q_s J Q_s = 0$ since $\ran P_s$ and $\ran Q_s$ are Lagrangian, we have, 
		\begin{align*}
			&\left (\T_t R_t(\bar{\zeta})\right )^*\cJ \T_t R_s(\zeta)= \left (\T_t R_t(\bar{\zeta})\right )^*\cJ \T_s R_s(\zeta) + \left (\T_t R_t(\bar{\zeta})\right )^*\cJ (\T_t-\T_s) R_s(\zeta), \\
			&\qquad = \left (\T_t R_t(\bar{\zeta})\right )^*(P_t\oplus Q_t)\cJ (P_s\oplus Q_s) \T_s R_s(\zeta) + \left (\T_t R_t(\bar{\zeta})\right )^*\cJ (\T_t-\T_s) R_s(\zeta), \\
			&\qquad = \left (\T_t R_t(\bar{\zeta})\right )^*(P_tJP_s \oplus (-Q_tJ Q_s) \T_s R_s(\zeta) + \left (\T_t R_t(\bar{\zeta})\right )^*\cJ (\T_t-\T_s) R_s(\zeta), \\
			&\qquad= \left (\T_t R_t(\bar{\zeta})\right )^*\left ( (P_t-P_s)J \oplus (-( Q_t - Q_s)J)\right ) \T_s R_s(\zeta) + \left (\T_t R_t(\bar{\zeta})\right )^*\cJ (\T_t-\T_s) R_s(\zeta),
		\end{align*}
		which yields \eqref{resolvent_diff2new} upon substituting $r=t$ into \eqref{resolvent_diff1new}.
	\end{proof}

	\subsection{Asymptotic expansions and a Hadamard formula}
	
	In this section, we give first order asymptotic expansions for the resolvent, and for an operator to which $P(t)(\cN_t + V_t)P(t)$, where $P(t)$ is the spectral projection onto the finite dimensional kernel $\cN_t+V_t-\la$, is similar. Using the latter, we give a Hadamard-type variational formula for the eigenvalue curve $\la(t)$.

	\begin{hypo}\label{hyp:non_empty_res_set}
		Let us fix $t_0\in(0,1)$ and suppose that there exists $z\in\bbC$ such that $z\not\in\spec(\cN_t+V_t)$ for $t$ near $t_0$.
		\end{hypo}
	
	Next, we give two auxiliary statements regarding boundedness and continuity of the mapping $t\mapsto (\cN_t+V_t-z)^{-1}$. The first of the following statements will be required when applying the trace operator $\T_t$ to the resolvent $R_t(\zeta)$, where $\dom(\T_t) = \cH_+^2$ (recall \cref{prop:T_trace}). In the proof of \cref{thm:resolvent_regularity}, we will also need to control the operator norm of the resolvent, viewing it as a bounded operator from $\cH^2$ into $\cH_+^2$. 	Second, we prove the continuity of the resolvent operator in $t$, viewed as a bounded operator from $\cH^2$ into $\cH_+^2$.
	
	\begin{prop}\label{prop:resolvent_bounded_continuous}
		Assume \cref{hypo:QPGH_families} and \cref{hyp:non_empty_res_set}. Then the following assertions hold. 
		\begin{enumerate}
			\item For  $t$ near $t_0$, the resolvent $(\cN_t +V_t- z)^{-1}$ can be viewed as a bounded operator from $\cH^2$ into $\cH_+^2$. Consequently, $\T_t (\cN_t +V_t- z)^{-1}\in \cB(\cH^2, \fH^4)$. 
			\item If the mappings $t\mapsto P_t, t\mapsto Q_t, t\mapsto \T_t$ are continuous at $t_0$, then so is the mapping $t\mapsto(\cN_t+V_t - z)^{-1}\in\cB(\cH^2, \cH^2_+)$, that is, 
			\begin{equation}\label{resolvent_continuous_H_H+}
				\| (\cN_t+V_t - z)^{-1} - (\cN_{t_0}+V_{t_0} - z)^{-1} \|_{\cB(\cH^2,\cH_+^2)} = o(1), \,\,\, t \to t_0.
			\end{equation}
		\end{enumerate} 
	\end{prop}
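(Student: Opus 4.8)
The plan is to deduce part (1) from a direct graph-norm bound on the resolvent, and then to obtain part (2) by combining the resolvent difference formula of \cref{lemma:difference2} with an absorption argument and a second application of that bound. For part (1), I would fix $t$ near $t_0$, so that $z\notin\spec(\cN_t+V_t)$ and hence (as $\cN_t+V_t$ is closed) $(\cN_t+V_t-z)^{-1}\in\cB(\cH^2)$, and for $\mathbf{f}\in\cH^2$ set $\mathbf{u}\coloneqq(\cN_t+V_t-z)^{-1}\mathbf{f}\in\dom(\cN_t)=\dom(\cA_t)\times\dom(\cB_t)\subset\cH_+^2$. The key point is that $N^*\mathbf{u}=-\cN_t\mathbf{u}$ on $\dom(\cN_t)$ because $\cA_t,\cB_t\subset A^*$, so $\|N^*\mathbf{u}\|_{\cH^2}=\|\cN_t\mathbf{u}\|_{\cH^2}=\|\mathbf{f}+(z-V_t)\mathbf{u}\|_{\cH^2}$, and combining this with $\|\mathbf{w}\|_{\cH_+^2}\le\|\mathbf{w}\|_{\cH^2}+\|N^*\mathbf{w}\|_{\cH^2}$ ($\mathbf{w}\in\cH_+^2$) gives $\|\mathbf{u}\|_{\cH_+^2}\le\big(1+(1+|z|+\|V_t\|_{\cB(\cH^2)})\,\|(\cN_t+V_t-z)^{-1}\|_{\cB(\cH^2)}\big)\|\mathbf{f}\|_{\cH^2}$, i.e. $(\cN_t+V_t-z)^{-1}\in\cB(\cH^2,\cH_+^2)$. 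Composing on the left with $\T_t\in\cB(\cH_+^2,\fH^4)$, which is bounded by \cref{hypo:2.1_fundamental_hypo} and \cref{prop:T_trace}, then yields $\T_t(\cN_t+V_t-z)^{-1}\in\cB(\cH^2,\fH^4)$.

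For part (2) I would argue in two steps. \emph{Step 1 (continuity in $\cB(\cH^2)$).} Apply \eqref{resolvent_diff2new} with $s=t_0$ and $\zeta=z$, and also with $\zeta=\bar z$; here $\bar z\notin\spec(\cN_t+V_t)$ for $t$ near $t_0$, since $\tau$ conjugates $\cN_t+V_t$ to its adjoint (cf. \eqref{N*tau}, \eqref{V*tau}) so the spectrum is symmetric under complex conjugation, whence $(\cN_t+V_t-z)^{-1}$ and $(\cN_t+V_t-\bar z)^{-1}$ are both bounded near $t_0$. Each term on the right-hand side of \eqref{resolvent_diff2new} carries one of the differences $V_t-V_{t_0}$, $P_t-P_{t_0}$, $Q_t-Q_{t_0}$, $\T_t-\T_{t_0}$, all of which tend to $0$ in operator norm as $t\to t_0$, together with a factor $\T_t(\cN_t+V_t-\bar z)^{-1}$ whose norm, by part (1) and the local boundedness of $\|\T_t\|$ and $\|V_t\|$ near $t_0$, is at most $c\,(1+\|(\cN_t+V_t-\bar z)^{-1}\|_{\cB(\cH^2)})$ for some $c=c(t_0,z)$. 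Letting $\delta(t)$ denote the sum of the $\cB(\cH^2)$-distances of $(\cN_t+V_t-z)^{-1}$ and $(\cN_t+V_t-\bar z)^{-1}$ to their values at $t_0$, and $\e(t)$ the sum of the four small norms, I would obtain an estimate of the form $\delta(t)\le C\,(1+\delta(t))\,\e(t)$ with $C=C(t_0,z)$; since $\delta(t)<\infty$ for $t$ near $t_0$ and $\e(t)\to0$, on a small enough neighbourhood where $C\e(t)\le\tfrac12$ one absorbs the $\delta(t)$-term to conclude $\delta(t)=O(\e(t))\to0$, so that in particular $\|(\cN_t+V_t-z)^{-1}\|_{\cB(\cH^2)}$ stays bounded near $t_0$. \emph{Step 2 (upgrade to $\cB(\cH^2,\cH_+^2)$).} For $\mathbf{f}\in\cH^2$, with $\mathbf{u}_t\coloneqq(\cN_t+V_t-z)^{-1}\mathbf{f}$ and $\mathbf{u}_0\coloneqq(\cN_{t_0}+V_{t_0}-z)^{-1}\mathbf{f}$, one has $N^*(\mathbf{u}_t-\mathbf{u}_0)=(z-V_t)(\mathbf{u}_t-\mathbf{u}_0)-(V_t-V_{t_0})\mathbf{u}_0$, so $\|\mathbf{u}_t-\mathbf{u}_0\|_{\cH_+^2}\le\|\mathbf{u}_t-\mathbf{u}_0\|_{\cH^2}+\|N^*(\mathbf{u}_t-\mathbf{u}_0)\|_{\cH^2}=o(1)\,\|\mathbf{f}\|_{\cH^2}$ by Step 1, the local boundedness of $\|V_t\|$, and the continuity of $t\mapsto V_t$ at $t_0$; this is exactly \eqref{resolvent_continuous_H_H+}.

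The step I expect to be the main obstacle is Step 1 of part (2): the difference formula \eqref{resolvent_diff2new} unavoidably reintroduces a resolvent — here $(\cN_t+V_t-\bar z)^{-1}$ — on its own right-hand side, whose $\cB(\cH^2)$-norm near $t_0$ is not controlled a priori, so continuity cannot simply be read off by estimating the formula term by term. The resolution rests on (i) the a priori \emph{finiteness} of $\delta(t)$ for $t$ near $t_0$, which is precisely where it matters that $\bar z$ (and not just $z$) lies in the resolvent set of $\cN_t+V_t$ — a free consequence of the conjugation symmetry of the spectrum noted above — and (ii) the absorption of the $\delta(t)$-term once $C\e(t)\le\tfrac12$. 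I also flag that, although part (2) is phrased using continuity of $P$, $Q$, $\T$ at $t_0$ only, the argument (and hence the hypothesis, consistently with \cref{hypo:QPGH_families}) also requires $t\mapsto V_t$ to be continuous, hence locally bounded, at $t_0$.
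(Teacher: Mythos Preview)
Your proposal is correct and follows essentially the same route as the paper: the graph-norm bound in part~(1) is exactly the paper's \eqref{eq:3.11.2} (your identity $N^*\mathbf{u}=-\cN_t\mathbf{u}$ on $\dom(\cN_t)$ is equivalent to the paper's use of $\tau\cN_t\subset(\tau N)^*$), and in part~(2) both you and the paper feed \eqref{resolvent_diff2new} into an absorption argument to get uniform boundedness of $\|R_t(z)\|_{\cB(\cH^2)}$, then upgrade to $\cB(\cH^2,\cH_+^2)$ via the same graph-norm computation. The only cosmetic difference is that you track $R_t(z)$ and $R_t(\bar z)$ jointly via $\delta(t)$, whereas the paper implicitly uses $\|R_t(\bar z)\|_{\cB(\cH^2)}=\|R_t(z)\|_{\cB(\cH^2)}$ (a consequence of $R_t(\bar z)=\tau R_t(z)^*\tau$) to reduce to a single quantity; your remark that continuity of $t\mapsto V_t$ is also needed is well taken and is indeed used in the paper's argument.
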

	\begin{proof}

		For (1), recall that $\cH_+^2$ is equipped with the norm \eqref{inn_prod_H+}. Then for all $\mathbf{u}\in\cH^2$, using the Cauchy-Schwartz inequality and that $\tau \cN_t \subset (\tau N)^*$, we have
		\begin{align*}
			 &\| R_t(z)\mathbf{u} \|^2_{\cH_+^2} =  \| \tau R_t(z)\mathbf{u}\|^2_{\cH_+^2} = \| \tau R_t(z)\mathbf{u}\|^2_{\cH^2} + \|N^* \tau  R_t(z)\mathbf{u}\|^2_{\cH^2}, \\
			 &\quad =\| \tau R_t(z)\mathbf{u}\|^2_{\cH^2} + \|(\tau N)^*  R_t(z)\mathbf{u}\|^2_{\cH^2}, \\
			 &\quad \leq \| \tau R_t(z)\mathbf{u}\|^2_{\cH^2} + \left (\| \tau (\cN_t +V_t- z)  R_t(z)\mathbf{u}\|_{\cH^2} + (\|\tau V_t\|_{\cB(\cH^2)}+|z|) \| \tau  R_t(z)\mathbf{u}\|_{\cH^2}\right )^2, \\
			 &\quad=  \|  R_t(z)\mathbf{u}\|^2_{\cH^2} + \left (\|  \mathbf{u}\|_{\cH^2} + (\|\tau V_t\|_{\cB(\cH^2)}+|z|) \|   R_t(z)\mathbf{u}\|_{\cH^2}\right )^2,
		\end{align*}
		it follows that
		\begin{equation}\label{eq:3.11}
			\|  R_t(z) \|^2_{\cB(\cH^2,\cH_+^2)} \leq \| R_t(z)\|^2_{\cB(\cH^2)} + \left (1 +  (\|\tau V_t\|_{\cB(\cH^2)}+|z|) \| R_t(z)\|_{\cB(\cH^2)} \right )^2,
		\end{equation}
	and, hence,
		\begin{equation}\label{eq:3.11.2}
		\|  R_t(z) \|_{\cB(\cH^2,\cH_+^2)} \leq 1 + (\|\tau V_t\|_{\cB(\cH^2)}+|z|+1) \|R_t(z)\|_{\cB(\cH^2)} .
	\end{equation}
		Regarding statement (2), we first claim that 
		\begin{equation}\label{norm_bound_prop33}
			\left \|R_t(z)- R_{t_0}(z) \right \|_{\cB(\cH^2, \cH_+^2)} \leq \sqrt{1+|z|} \left \|R_t(z)- R_{t_0}(z) \right \|_{\cB(\cH^2)}.
		\end{equation}
		Indeed, again using that $\tau \cN_t \subset (\tau N)^*$, for $\mathbf{u}\in\cH^2$ we have
		\begin{align*}
			&\left \| R_t(z)\mathbf{u}-R_{t_0}(z)\mathbf{u} \right \|^2_{\cH_+^2} = \left \| \tau R_{t}(z)\mathbf{u}  - \tau R_{t_0}(z) \mathbf{u} \right \|^2_{\cH_+^2}, \\
			&\qquad = \left \| \tau R_{t}(z)\mathbf{u}  - \tau R_{t_0}(z) \mathbf{u} \right \|^2_{\cH^2} + \left \| (\tau N)^* R_{t}(z)\mathbf{u} - (\tau N)^* R_{t_0}(z) \mathbf{u} \right \|^2_{\cH^2}, \\
			&\qquad = \left \| \tau R_{t}(z)\mathbf{u}  - \tau R_{t_0}(z) \mathbf{u} \right \|^2_{\cH^2} \\
			& \qquad \quad +\left \| \tau (\cN_t+V_t -z) R_{t}(z)\mathbf{u} - (\tau (\cN_{t_0}+V_{t_0} -z) R_{t_0}(z) \mathbf{u} -\tau ( V_t - z) R_t(z)\mathbf{u} + \tau (V_{t_0}-z) R_{t_0}(z) \mathbf{u} \right \|^2_{\cH^2}, \\
			&\qquad =\left \| R_t(z)\mathbf{u}  - R_{t_0}(z) \mathbf{u} \right \|^2_{\cH^2} +\left \| \tau ( V_t - z) R_t(z)\mathbf{u} - \tau (V_{t_0}-z) R_{t_0}(z) \mathbf{u} \right \|^2_{\cH^2}, \\
			&\qquad \leq \left \| R_t(z)\mathbf{u}  - R_{t_0}(z) \mathbf{u} \right \|^2_{\cH^2} +2\left \| \tau ( V_t - z) \right\|^2_{\cB(\cH^2)}\left\|(R_t(z)-R_{t_0}(z))\mathbf{u} \|^2_{\cH^2} +2\|\tau (V_t-V_{t_0})\|^2_{\cB(\cH^2)}\| R_{t_0}(z) \mathbf{u} \right \|^2_{\cH^2}, \\
			&\qquad\leq \left ( \| R_t(z)\mathbf{u}  - R_{t_0}(z) \mathbf{u}   \|_{\cH^2}\left (1+\sqrt{2}\left \| \tau ( V_t - z) \right\|_{\cB(\cH^2)}\right )+\sqrt{2}\|\tau (V_t-V_{t_0})\|_{\cB(\cH^2)}\left\| R_{t_0}(z) \mathbf{u} \right \|_{\cH^2}\right )^2
		\end{align*}
		Taking the supremum over all $\mathbf{u}\in\cH^2$ such that $\|\mathbf{u}\|_{\cH^2}=1$ we obtain
			\begin{align*}
			&\left \| R_t(z)-R_{t_0}(z) \right \|_{\cB(\cH^2,\cH_+^2)} \\
			&\quad\leq  \| R_t(z)  - R_{t_0}(z) \|_{\cB(\cH^2)}\left (1+\sqrt{2}\left \| \tau ( V_t - z) \right\|_{\cB(\cH^2)}\right )+\sqrt{2}\|\tau (V_t-V_{t_0})\|_{\cB(\cH^2)}\left\| R_{t_0}(z)  \right \|_{\cB(\cH^2)}
		\end{align*}
		
		Thus, it is enough to prove that the right hand side of \eqref{norm_bound_prop33} is $o(1)$ as $t\to t_0$. To this end, we first establish an auxiliary assertions
		\begin{equation}\label{eq:res_bound}
			\|R_t(z)\|_{\cB (\cH^2, \cH_+^2)}=\cO(1), 	\|R_t(z)\|_{\cB (\cH^2)}=\cO(1),\ t\rightarrow t_0.
		\end{equation} 
		Using the resolvent difference formula \eqref{resolvent_diff2new}, we find
		\begin{align}
			\begin{split}\label{eq:res_dif_norm_bound}
		&\left \|R_t(z) - R_{t_0}(z) \right \|_{\cB(\cH^2)} \\
		& \qquad \leq  \|R_t({z})  (V_t -V_{t_0} ) R_{t_0}(z)\|_{\cB(\cH^2)} + \| \tau	\left ( \T_t R_t(\bar{z})\right )^* \Big ((P_t-P_{t_0})\oplus (Q_t-Q_{t_0})\Big )\cJ T_{t_0} R_{t_0}(z) \|_{\cB(\cH^2)}, \\
		&\qquad \leq \ \|R_t({z})\|_{\cB(\cH^2)} \|  (V_t -V_{t_0} )\|_{\cB(\cH^2)} \| R_{t_0}(\zeta)\|_{\cB(\cH^2)} \\
		&\qquad\qquad+ \|\T_t   \|_{\cB(\cH_+^2,\fH^4)}
		   \|R_t(z)  \|_{\cB(\cH^2,\cH_+^2)}
		   \|(P_t-P_{t_0})\oplus (Q_t-Q_{t_0}) \|_{\cB(\fH^4)} \times \\
		& \qquad \qquad \qquad \qquad \qquad \qquad \qquad \qquad \times \left \|\T_{t_0} \right \|_{\cB(\cH_+^2,\fH^4)} \left \|  R_{t_0}(z)\right \|_{\cB(\cH^2,\cH_+^2)}, \\
		&\qquad \leq C(  \|R_t({z})\|_{\cB(\cH^2)} \|  (V_t -V_{t_0} )\|_{\cB(\cH^2)} +	\left  \|R_t(z)\right \|_{\cB(\cH^2,\cH_+^2)} \left  \|(P_t-P_{t_0})\oplus (Q_t-Q_{t_0})\right \|_{\cB(\fH^4)}),
	\end{split}
				\end{align}
		for some $C=C(z)>0$. Further employing \eqref{eq:3.11.2} and the triangle inequality  we arrive at
			\begin{align}
			&\left \|R_t(z) - R_{t_0}(z) \right \|_{\cB(\cH^2)} \\
			&\qquad \leq C(\|R_t({\zeta})\|_{\cB(\cH^2)} \|  (V_t -V_{t_0} )\|_{\cB(\cH^2)} + (1 +  \|R_t(z)\|_{\cB(\cH^2)}) \left  \|(P_t-P_{t_0})\oplus (Q_t-Q_{t_0})\right \|_{\cB(\fH^4)}),
		\end{align}
	for some $C=C(z)>0$. Next, we use the triangle inequality in the right-hand side above and rearrange terms to get
			\begin{align}
			&\left \|R_t(z)\right \|_{\cB(\cH^2)}\left(1 -C( \|  (V_t -V_{t_0} )\|_{\cB(\cH^2)}+\left  \|(P_t-P_{t_0})\oplus (Q_t-Q_{t_0})\right \|_{\cB(\fH^4)})	\right)\\
			&\qquad \leq  \left\|R_{t_0}(z) \right \|_{\cB(\cH^2)}+C \left  \|(P_t-P_{t_0})\oplus (Q_t-Q_{t_0})\right \|_{\cB(\fH^4)},
		\end{align}
		which together with 
		\begin{equation}\label{eq:lag_o1}
		\|  (V_t -V_{t_0} )\|_{\cB(\cH^2)}+\left\|(P_t-P_{t_0})\oplus (Q_t-Q_{t_0})\right \|_{\cB(\fH^4)}=o(1), \ t\rightarrow t_0,
	\end{equation}
		 yields the second equality in \eqref{eq:res_bound}. The first equality in \eqref{eq:res_bound} follows form the second one and \eqref{eq:3.11.2}.
	
	Finally, \eqref{resolvent_continuous_H_H+} follows from \eqref{norm_bound_prop33}, \eqref{eq:res_bound}, \eqref{eq:res_dif_norm_bound}, \eqref{eq:lag_o1}.
	\end{proof}

	In the following theorem we show that the continuity, Lipschitz continuity and differentiability of the mappings $t\mapsto V_t$, $t \mapsto \T_t,t\mapsto Q_t, t\mapsto P_t$ imply the continuity, Lipschitz continuity and differentiability, respectively, of the mapping $t\mapsto R_t(\zeta)$. 
	
	\begin{theorem}\label{thm:resolvent_regularity}
Assume Hypothesis \ref{hyp:non_empty_res_set}. Suppose  that $\zeta_0 \notin \spec(\cN_{t_0}+V_{t_0})$  and define
\begin{equation}
	\cU_\e \coloneqq \{ (t,\zeta) \in (0,1)\times \C : |t-t_0|\leq \e, |\zeta-\zeta_0|\leq \e\}\,\,\,\text{for} \,\,\,\e>0.
\end{equation} 
		\begin{enumerate}
			\item Suppose that the mappings $t\mapsto V_t$, $t \mapsto \T_t,t\mapsto Q_t, t\mapsto P_t$ are continuous at $t_0$. Then there exists an $\e>0$ such that if $(t,\zeta)\in \cU_\e$ then $\zeta \notin \spec(\cN_t + V_t)$, and operator valued function $t\mapsto R_t(\zeta) = (\cN_t+V_t -\zeta)^{-1}\in\cB(\cH^2)$ is continuous at $t_0$ uniformly for $|\zeta-\zeta_0| < \e$.
			\item Suppose  that the mappings $t\mapsto V_t$, $t \mapsto \T_t,t\mapsto Q_t, t\mapsto P_t$ are Lipschitz continuous at $t_0$. Then there exists a constant $c>0$ such that for all $(t,\zeta)\in\cU_\e$, we have
			\begin{equation}\label{resolvent_difference_norm}
				\|R_t(\zeta) - R_{t_0}(\zeta)\|_{\cB(\cH^2)} \leq c|t-t_0|.
			\end{equation} 
			\item Suppose that the mappings $t\mapsto V_t$, $t \mapsto \T_t,t\mapsto Q_t, t\mapsto P_t$ are differentiable at $t_0$. Then for some $\e>0$ the following asymptotic expansion holds uniformly for $|\zeta - \zeta_0|<\e$,
			\begin{align}
				\begin{split}\label{Rt_asymp_exp}
					R_t(\zeta) &\underset{t\to t_0}{=} R_{t_0}(\zeta) + \Big ( - R_{t_0}(\zeta) \dot V_{t_0}  R_{t_0}(\zeta) + \tau\left ( \T_{t_0} R_{t_0}(\bar\zeta)   \right )^*  \big (\dot P_{t_0} J \oplus  -\dot Q_{t_0} J\big ) \T_{t_0} R_{t_0}(\zeta)  \\
					& \qquad \qquad \qquad \qquad +\tau\left ( \T_{t_0} R_{t_0}(\bar\zeta)   \right )^*  \cJ \dot\T_{t_0} R_{t_0}(\zeta)\Big ) (t-t_0) +o(t-t_0) \quad in\,\, \cB(\cH^2).
				\end{split}
			\end{align}
		\end{enumerate}
		\end{theorem}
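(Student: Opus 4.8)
The plan is to deduce all three assertions from the symplectic resolvent difference formula \eqref{resolvent_diff2new} of \cref{lemma:difference2}, specialised to $s=t_0$ and $r=t$, together with the a priori resolvent estimates worked out in the proof of \cref{prop:resolvent_bounded_continuous}. The first task is a \emph{preliminary step}: produce $\e>0$ with $\zeta\notin\spec(\cN_t+V_t)$ for all $(t,\zeta)\in\cU_\e$, together with the uniform bounds $\sup_{(t,\zeta)\in\cU_\e}\bigl(\|R_t(\zeta)\|_{\cB(\cH^2)}+\|R_t(\zeta)\|_{\cB(\cH^2,\cH_+^2)}+\|\T_t\|_{\cB(\cH_+^2,\fH^4)}\bigr)<\infty$ and the $\cB(\cH^2,\cH_+^2)$-continuity of $t\mapsto R_t(\zeta)$, uniformly for $|\zeta-\zeta_0|<\e$. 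Since $\zeta_0\notin\spec(\cN_{t_0}+V_{t_0})$, for $\e$ small $R_{t_0}(\zeta)$ exists and, via \eqref{eq:3.11.2}, is bounded in both $\cB(\cH^2)$ and $\cB(\cH^2,\cH_+^2)$ uniformly for $|\zeta-\zeta_0|\le\e$. Using that $\tau(\cN_t+V_t)\tau=(\cN_t+V_t)^*$ one has $R_t(\bar\zeta)=\tau R_t(\zeta)^*\tau$, hence $\tau(\T_tR_t(\bar\zeta))^*=R_t(\zeta)\tau\T_t^*$, so \eqref{resolvent_diff2new} with $s=t_0$, $r=t$ rearranges to the closed form $R_t(\zeta)\bigl(I-D_t(\zeta)\bigr)=R_{t_0}(\zeta)$, where $D_t(\zeta)\in\cB(\cH^2)$ collects the perturbations of the potential, the projections and the trace maps and satisfies $\|D_t(\zeta)\|_{\cB(\cH^2)}\to0$ as $t\to t_0$, uniformly in $|\zeta-\zeta_0|\le\e$, by the continuity hypotheses. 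A Neumann series yields the candidate $\widetilde R_t(\zeta):=R_{t_0}(\zeta)(I-D_t(\zeta))^{-1}$, and the decisive check — via the modified Green's identity \eqref{modified_greensOm} and the identification \eqref{projectionsQP} of $\ran P_t,\ran Q_t$ with $\tr_t(\dom\cA_t),\tr_t(\dom\cB_t)$ — is that $\widetilde R_t(\zeta)$ maps into $\dom\cN_t$ and is a two-sided inverse of $\cN_t+V_t-\zeta$; this gives $R_t(\zeta)=\widetilde R_t(\zeta)$ and the uniform bounds, and the $\cB(\cH^2,\cH_+^2)$-continuity then follows exactly as in the proof of \cref{prop:resolvent_bounded_continuous}(2), carried out on $\cU_\e$.

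Granting the preliminary step, parts (1) and (2) are immediate. Taking $\cB(\cH^2)$-norms in \eqref{resolvent_diff2new} with $s=t_0$, every factor ($R_t(\zeta)$, $R_t(\bar\zeta)$ viewed in $\cB(\cH^2,\cH_+^2)$, $\T_t$, $\T_{t_0}$, $R_{t_0}(\zeta)$) is $\cO(1)$ on $\cU_\e$, while $\|V_t-V_{t_0}\|$, $\|P_t-P_{t_0}\|$, $\|Q_t-Q_{t_0}\|$ and $\|\T_t-\T_{t_0}\|$ tend to $0$; hence $\|R_t(\zeta)-R_{t_0}(\zeta)\|_{\cB(\cH^2)}=o(1)$ uniformly in $|\zeta-\zeta_0|<\e$, which is (1). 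Under the Lipschitz hypotheses those same four differences are $\cO(|t-t_0|)$, so the identical estimate gives \eqref{resolvent_difference_norm}.

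For part (3) I would divide \eqref{resolvent_diff2new} (with $s=t_0$, $r=t$) by $t-t_0$ and let $t\to t_0$. In the first summand $R_t(\zeta)\tfrac{V_{t_0}-V_t}{t-t_0}R_{t_0}(\zeta)$, use $R_t(\zeta)\to R_{t_0}(\zeta)$ in $\cB(\cH^2)$ from part (1) and $\tfrac{V_{t_0}-V_t}{t-t_0}\to-\dot V_{t_0}$ to obtain $-R_{t_0}(\zeta)\dot V_{t_0}R_{t_0}(\zeta)$. For the two boundary summands, the crucial fact is $\T_tR_t(\bar\zeta)\to\T_{t_0}R_{t_0}(\bar\zeta)$ in $\cB(\cH^2,\fH^4)$, which follows from the decomposition $\T_tR_t(\bar\zeta)-\T_{t_0}R_{t_0}(\bar\zeta)=\T_t\bigl(R_t(\bar\zeta)-R_{t_0}(\bar\zeta)\bigr)+(\T_t-\T_{t_0})R_{t_0}(\bar\zeta)$ together with the $\cB(\cH^2,\cH_+^2)$-continuity of $R_\cdot(\bar\zeta)$, the bound $\|\T_t\|_{\cB(\cH_+^2,\fH^4)}=\cO(1)$, and $\|\T_t-\T_{t_0}\|\to0$; consequently $\tau(\T_tR_t(\bar\zeta))^*\to\tau(\T_{t_0}R_{t_0}(\bar\zeta))^*$ in $\cB(\fH^4,\cH^2)$. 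Combining this with $\tfrac{P_t-P_{t_0}}{t-t_0}\to\dot P_{t_0}$, $\tfrac{Q_t-Q_{t_0}}{t-t_0}\to\dot Q_{t_0}$ and $\tfrac{\T_t-\T_{t_0}}{t-t_0}\to\dot\T_{t_0}$ identifies the limits of the second and third summands as $\tau(\T_{t_0}R_{t_0}(\bar\zeta))^*(\dot P_{t_0}J\oplus-\dot Q_{t_0}J)\T_{t_0}R_{t_0}(\zeta)$ and $\tau(\T_{t_0}R_{t_0}(\bar\zeta))^*\cJ\dot\T_{t_0}R_{t_0}(\zeta)$ respectively, which together give the bracketed coefficient of $t-t_0$ in \eqref{Rt_asymp_exp}. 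Finally, writing $R_t(\zeta)-R_{t_0}(\zeta)$ minus $(t-t_0)$ times this coefficient as $(t-t_0)$ times a sum of difference-quotient errors, each $o(1)$ uniformly in $|\zeta-\zeta_0|<\e$ by the uniform bounds, yields the $o(t-t_0)$ remainder and the asserted uniformity.

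I expect the main obstacle to be the preliminary step — specifically, verifying that the operator $\widetilde R_t(\zeta)$ produced by the Neumann series genuinely maps into $\dom\cN_t$ and is therefore the resolvent, rather than merely a bounded parametrix; this is where the fine structure of the trace operator $\T_t$ and the Lagrangian boundary conditions \eqref{projectionsQP} must be used, beyond the soft norm estimates that suffice for parts (1)--(3). The only remaining subtlety is keeping all convergences uniform for $\zeta$ near $\zeta_0$, which is handled throughout by the uniform a priori bounds on $\cU_\e$.
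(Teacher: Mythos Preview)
Your treatment of parts (1)--(3), \emph{once the preliminary uniform bounds on $\cU_\e$ are in hand}, runs parallel to the paper's: both arguments estimate \eqref{resolvent_diff2new} with $s=t_0$ term by term, and for (3) both substitute the first-order expansions of $V_t,P_t,Q_t,\T_t$ and use the $\cB(\cH^2,\cH_+^2)$-continuity of the resolvent to pass to the limit.

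The gap is in your preliminary step. The factorisation $\tau(\T_tR_t(\bar\zeta))^*=R_t(\zeta)\tau\T_t^*$ does not hold as written. The left side is the $\cB(\fH^4,\cH^2)$-adjoint of $\T_tR_t(\bar\zeta)\in\cB(\cH^2,\fH^4)$, so it is computed with the $\cH^2$ pairing; but $\T_t$ is only bounded from $\cH_+^2$ to $\fH^4$, so its $\cH^2$-adjoint takes values in $(\cH_+^2)^*\supsetneq\cH^2$, and you cannot then apply $R_t(\zeta)\in\cB(\cH^2)$. (Concretely: the functional $w\mapsto\langle f,\T_tw\rangle_{\fH^4}$ is bounded for the graph norm but not for the $\cH^2$ norm, so there is no $h_f\in\cH^2$ representing it.) Consequently the rearrangement $R_t(\zeta)(I-D_t(\zeta))=R_{t_0}(\zeta)$ with $D_t(\zeta)\in\cB(\cH^2)$ is not available, and the Neumann-series candidate $\widetilde R_t(\zeta)$ is not defined. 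Even if one repaired the factorisation, the ``decisive check'' that $\widetilde R_t(\zeta)$ lands in $\dom\cN_t$ and inverts $\cN_t+V_t-\zeta$ is genuinely nontrivial and is not carried out.

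The paper bypasses all of this by using \cref{hyp:non_empty_res_set} for what it actually provides: a point $z$ with $z\notin\spec(\cN_t+V_t)$ for all $t$ near $t_0$. Then \cref{prop:resolvent_bounded_continuous} already gives $\|R_t(z)\|_{\cB(\cH^2)},\|R_t(z)\|_{\cB(\cH^2,\cH_+^2)}=\cO(1)$ and continuity of $t\mapsto R_t(z)$, and for arbitrary $\zeta$ near $\zeta_0$ the ordinary (first) resolvent identity $R_t(\zeta)=R_t(z)-(z-\zeta)R_t(z)R_t(\zeta)$ transfers these bounds to $\cU_\e$ (this is how the paper obtains \eqref{sup2}). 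Replace your preliminary step with this argument; the remainder of your proof then matches the paper's.
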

	\begin{proof}
		Assertion (1) follows immediately from \cref{prop:resolvent_bounded_continuous} (2). 
		
		For statement (2), by (1) we have 
		\begin{equation}\label{sup1}
			\sup\left \{ \|R_t(\zeta)\|_{\cB(\cH^2)} :(t,\zeta) \in \cU_\e \right \} < \infty.
		\end{equation}
		We claim that yet a smaller choice of $\e>0$ gives
		\begin{equation}\label{sup2}
			\sup\left \{ \|R_t(\zeta)\|_{\cB(\cH^2,\cH_+^2)} :(t,\zeta) \in \cU_\e \right \} < \infty.
		\end{equation}
		Indeed, by the resolvent identity we have 
		\begin{equation}
			R_t(\zeta) = R_t(z) - (z-\zeta)R_t(z)R_t(\zeta).
		\end{equation}
		Using this and the fact that 
		\[
		\|R_t(z)\|_{\cB(\cH^2, \cH_+^2)} = \cO(1), \quad t\to t_0,
		\]
	 \eqref{sup1} then implies \eqref{sup2}. Next, by the resolvent difference formula \eqref{resolvent_diff2new} and \eqref{sup2}, we infer
		\begin{align*}
			R_t(\zeta) - R_{t_0}(\zeta) &= R_t(\zeta)(V_{t_0}-V_t)R_{t_0}(\zeta) \\
			& \qquad + \tau (\T_t R_t(\bar{\zeta}))^*( (P_t - P_{t_0})\oplus (Q_{t} - Q_{t_0}) ) \cJ (P_{t_0} \oplus Q_{t_0}) \T_{t_0} R_{t_0}(\zeta) \\
			& \qquad \quad + \tau (\T_t R_t(\bar{\zeta}))^* \cJ (\T_t - \T_{t_0}) R_{t_0}(\zeta).
		\end{align*}		
		Hence
		\begin{align*}
			\| 	R_t(\zeta) - R_{t_0}(\zeta)  \|_{\cB(\cH^2)}& \leq c \max\Big\{ \|( (P_t - P_{t_0})\oplus (Q_{t} - Q_{t_0}) )\|_{\cB(\fH^4)}, \\
			& \qquad \qquad \qquad \qquad \qquad \qquad\|\T_t - \T_{t_0}\|_{\cB(\cH_+^2,\fH^4)}, \| V_t - V_{t_0} \|_{\cB(\cH^2)} \Big\},
		\end{align*}
		for some $c>0$ and all $(t,\zeta)\in\cU_\e$, where we used that 
		\begin{equation}
			\| \T_t R_t(\zeta)\|_{\cB(\cH^2,\fH^4)} \leq \|\T_t\|_{\cB(\cH_+^2, \fH^4)} \| R_t(\bar{\zeta}) \|_{\cB(\cH^2,\cH_+^2)}.
		\end{equation}
		For the proof of statement (3), we first note that the mapping $t\mapsto R_t(\zeta)\in\cB(\cH^2,\cH_+^2)$ is continuous at $t_0$, i.e. 
		\begin{equation}
			\|R_t(\zeta) - R_{t_0}(\zeta)\|_{\cB(\cH^2, \cH_+^2)} = o(1), \quad t\to t_0,
		\end{equation}
		uniformly for $|\zeta - \zeta_0|<\e$ for $\e>0$ as in \eqref{sup2}.		Next, by assumption we have
		\begin{align}
			\begin{split}\label{expansions}
				Q_t &\underset{t\to t_0}{=} Q_{t_0} + \dot Q_{t_0} (t-t_0) + o (t-t_0); \ P_t \underset{t\to t_0}{=}P_{t_0} + \dot P_{t_0} (t-t_0) + o (t-t_0), \\
				V_t &\underset{t\to t_0}{=} V_{t_0} + \dot V_{t_0} (t-t_0) + o (t-t_0); \ T_t\underset{t\to t_0}{=} \T_{t_0} + \,\dot \T_{t_0} (t-t_0) + o(t-t_0).
			\end{split}
		\end{align}
		Substituting $s=t_0$ into the resolvent difference formula \eqref{resolvent_diff2new}, we have, using \eqref{resolvent_difference_norm} and \eqref{expansions},
		\begin{align*}
			R_t(\zeta) &- R_{t_0}(\zeta) \, = R_t({\zeta})\left ( V_{t_0} -V_t \right ) R_{t_0}(\zeta) + \tau\left ( \T_t R_t(\bar{\zeta})\right )^* \Big ((P_t-P_{t_0}) J \oplus \left (-(Q_t-Q_{t_0}) J \right )\Big )\T_{t_0} R_{t_0}(\zeta) \\
			& \qquad \qquad \qquad \qquad \qquad \qquad \qquad +\tau \left ( \T_t R_t(\bar{\zeta})\right )^* \cJ(\T_t-\T_{t_0}) R_{t_0}(\zeta), \\
			&\hspace{-1.5mm}\underset{t\to t_0}{=} \left (R_{t_0}(\zeta) + \mathcal{O}(t-t_0) \right ) \left ( -\dot V_{t_0} (t-t_0) + o (t-t_0) \right ) R_{t_0}(\zeta)  \\
			& \qquad + \tau\left ( \left ( \T_{t_0} + \cO(t-t_0)\right )\left (R_{t_0}(\bar\zeta) + \cO_{\|\cdot \|_{\cB\left (\cH^2,\cH_+^2\right )}}(1) \right )  \right )^*\times \\
			& \qquad\Big ((\dot P_{t_0}(t-t_0) + o(t-t_0)) J\oplus - (\dot Q_{t_0}(t-t_0) +o(t-t_0)) J \Big) \T_{t_0} R_{t_0}(\zeta) \\
			& \qquad + \tau \left ( \left ( \T_{t_0} +  \cO(t-t_0)\right )\left (R_{t_0}(\bar\zeta) + \cO_{\|\cdot \|_{\cB\left (\cH^2,\cH_+^2\right )}}(1) \right )  \right )^* \cJ(\dot \T_{t_0} (t-t_0)+o(t-t_0)) R_{t_0}(\zeta), \\
			&\hspace{-2mm}\underset{t\to t_0}{=} \Big(  - R_{t_0}(\zeta) \dot V_{t_0} R_{t_0}(\zeta)  +  \tau \left ( \T_{t_0} R_{t_0}(\bar\zeta)\right )^*  \big (\dot P_{t_0} J \oplus  -\dot Q_{t_0} J\big ) \T_{t_0} R_{t_0}(\zeta)  \\
			& \qquad \qquad \qquad \qquad +\tau\left ( \T_{t_0} R_{t_0}(\bar\zeta)   \right )^*  \cJ \dot\T_{t_0} R_{t_0}(\zeta)  \Big) (t-t_0) + o(t-t_0),
		\end{align*}
		in $\cB(\cH^2)$ uniformly for $|\zeta - \zeta_0 | < \e$. This proves \eqref{Rt_asymp_exp}. 
	\end{proof}
	
	\begin{hypo}\label{hypo:simplicity}
		At a given $t_0\in[0,1]$, we suppose that $\la$ is an isolated, simple eigenvalue of $\cN_{t_0}+V_{t_0}$. We denote by $\gamma$ the following contour surrounding $\la$,
		\begin{equation}
			\gamma = \{ z\in \C : 2|z-\la| = \dist(\la,\spec(H_{t_0})\backslash\{\la\})\}.
		\end{equation}
	\end{hypo}
	By  Theorem \ref{thm:resolvent_regularity} for $t$ near $t_0$ the contour $\gamma$ encloses simple eigenvalue $\lambda(t)\in\spec(\cN_t+V_t)$. Corresponding to this eigenvalue is the Riesz projection 	\begin{equation}\label{Pt_defn}
		P(t) \coloneqq -\f{1}{2\pi i}\int_{\gamma} R_t(\zeta) d\zeta, \qquad R_t(\zeta) = (\cN_t +V_t -\zeta)^{-1},
	\end{equation}
	where $\gamma$ has the anti-clockwise orientation, as well as the reduced resolvent
	\begin{equation}\label{S_red_res}
		S \coloneqq \f{1}{2\pi i} \int_{\gamma} (\zeta - \la)^{-1} R_{t_0}(\zeta) d\zeta,
	\end{equation}
	and the identity 
	\begin{equation}\label{identity1}
		P(t_0)R_{t_0}(\zeta) = (\la-\zeta)^{-1}P(t_0).
	\end{equation}
\begin{rem}
	We note that \eqref{identity1} holds due to our assumption that $\lambda$ is a simple eigenvalue. It would still continue to hold for semi-simple eigenvalues (i.e., those with equal algebraic and geometric multiplicities). However in a more general case of non-semi-simple eigenvalues the right-hand side of \eqref{identity1} contains a nilpotent part, cf., e.g., \cite[eq. (5.23) in Chapter I.5]{Kato} which would enter an Hadamard-type formula for the derivative of eigenvalue curves. 
\end{rem}
	\begin{rem}\label{rem:bounded_in_H_to_H+}
		Due to continuity of the mapping $\zeta\mapsto R_t(\zeta)\in \cB(\cH^2, \cH^2_+)$ we can view $P(t), S$ as operators in $\cB(\cH^2, \cH_+^2)$ and we have
		\begin{align}
			\begin{split} \label{complex_integral_trace}
				\f{1}{2\pi i}\int_{\gamma}^{} \T_t\left ( (\zeta-\la)^{-1}R_t(\zeta) \right ) d\zeta &= \T_t \f{1}{2\pi i}\int_{\gamma}^{}\left ( (\zeta-\la)^{-1}R_t(\zeta) \right ) d\zeta = \T_t S, \\
				(\T_t P(t))&\in\cB(\cH^2, \fH^4).
			\end{split} 
		\end{align}
	\end{rem}
	We remark that, since $\cN_t + V_t$ is not self-adjoint, $P(t)$ and $S$ are not self-adjoint, see for example \cite[Proposition I.2.5]{Gohberg_Goldberg_Kaashoek}. However, using \eqref{N*tau} and \eqref{V*tau} we have
	\begin{align}
		(\tau P(t))^* = P(t)^*\tau = \f{1}{2\pi i}\int_{\bar\gamma} (\cN_t^* +V_t^* -\zeta)^{-1}\tau \,d\zeta = \f{-1}{2\pi i}\int_{\gamma}\tau  ( \cN_t + V_t - \zeta)^{-1} d\zeta = \tau P(t),
	\end{align}
	and, similarly, $(\tau S)^* = S^*\tau = \tau S$.

	Towards a Hadamard formula, we need to write down an asymptotic expansion for $P(t) H_t P(t)$ for $t$ near $t_0$. Since the domain of $P(t) H_t P(t)$ is $t$-dependent, we introduce the following transformation operators which allow us to work with a similar operator for which the domain is fixed, cf. \cite[Section I.4.6]{Kato}. To that end, we define $D(t) \coloneqq P(t) - P(t_0)$ satisfying $\|D(t)\|_{\cB(\cH)} = o(1)$ as $t\to t_0$, which follows from \eqref{resolvent_difference_norm} and the definition of $P(t)$ \eqref{Pt_defn}. The following operators are then well defined for $t$ near $t_0$.
	\begin{align}\label{UandU-1}
		\begin{split}
			U(t) &\coloneqq (I-D^2(t))^{-1/2}\Big(  (I-P(t))  (I-P(t_0))  + P(t)P(t_0) \Big), \\
			U(t)^{-1} &= \Big (   (I-P(t_0))  (I-P(t)) + P(t_0) P(t) \Big ) (I - D^2(t))^{-1/2}.
		\end{split}
	\end{align}
	Moreover, as in  \cite[Section I.4.6]{Kato} and \cite[Proposition 2.18]{F04}, we have that
	\begin{equation}
		U(t) P(t_0) = P(t) U(t), 
	\end{equation}
	and that $U(t)$ maps $\ran P(t_0)$ onto $\ran P(t)$ for $t$ near $t_0$. Unlike the analysis in \cite{LS20first}, since $\cN_t+V_t$ is not self-adjoint, the Riesz projection $P(t)$ is not self-adjoint and therefore not orthogonal, and hence $U(t)$ is not unitary. (In fact, one can show that $U(t)^{-1} = \tau U(t)^*\tau$.) Nonetheless, only minor tweaks to the proof of \cite[Lemma 3.24]{LS20first} are required for the following lemma.
	
	\begin{lemma}
		For a given $t_0$, we assume that the mappings $t\mapsto V_t$, $t \mapsto \T_t,t\mapsto Q_t, t\mapsto P_t$ are differentiable at $t_0$, and that \cref{hypo:simplicity} holds. We then have
		\begin{equation}
			\begin{split}\label{operator_expansion}
				&P(t_0) U(t)^{-1} (\cN_t +V_t) P(t) U(t) P(t_0) \underset{t\to t_0}{=} \la P(t_0) + \Big(P(t_0) \dot V_{t_0} P(t_0)  \\
				& \,- \tau \left ( \T_{t_0} P(t_0)\right  )^* \left ( \dot Q_{t_0} \oplus \dot P_{t_0} \right )\cJ \T_{t_0} P(t_0)  - \tau\left ( \T_{t_0} P(t_0)\right  )^* \cJ \dot\T_{t_0} P(t_0) \Big)(t-t_0) + o(t-t_0).
			\end{split}
		\end{equation}
	\end{lemma}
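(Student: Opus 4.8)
The plan is to follow the structure of the self-adjoint argument in \cite[Lemma 3.24]{LS20first}, the only change being that the Riesz projection $P(t)$, the reduced resolvent $S$ and the transformation operator $U(t)$ are no longer self-adjoint but merely \emph{$\tau$-twisted self-adjoint}, i.e.\ $(\tau P(t))^*=\tau P(t)$, $(\tau S)^*=\tau S$ and $U(t)^{-1}=\tau U(t)^*\tau$, as recorded in the paragraphs preceding the statement.

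First I would reduce the left-hand side to a single contour integral. Writing $H_t:=\cN_t+V_t$, the relation $U(t)P(t_0)=P(t)U(t)$ gives $P(t)U(t)P(t_0)=U(t)P(t_0)$ and $U(t)^{-1}P(t)U(t)=P(t_0)$, and since $\ran P(t)\subset\dom(H_t)$ the operator $H_tP(t)=P(t)H_tP(t)$ is bounded; using $H_tR_t(\zeta)=I+\zeta R_t(\zeta)$ and $\oint_\gamma d\zeta=0$ one has the bounded-operator identity $H_tP(t)=-\tfrac{1}{2\pi i}\oint_\gamma \zeta R_t(\zeta)\,d\zeta$. Since $t\mapsto V_t,\T_t,Q_t,P_t$ are differentiable at $t_0$, Theorem~\ref{thm:resolvent_regularity}(3) yields $R_t(\zeta)=R_{t_0}(\zeta)+(t-t_0)\dot R_{t_0}(\zeta)+o(t-t_0)$ in $\cB(\cH^2)$, uniformly for $\zeta$ near each point of $\gamma$; covering the compact contour $\gamma\subset\rho(H_{t_0})$ by finitely many such neighbourhoods makes the expansion uniform on all of $\gamma$, so it integrates to $H_tP(t)=H_{t_0}P(t_0)+(t-t_0)\dot X+o(t-t_0)$ with $\dot X:=-\tfrac{1}{2\pi i}\oint_\gamma\zeta\,\dot R_{t_0}(\zeta)\,d\zeta$, $\dot R_{t_0}(\zeta)$ being the bracket in \eqref{Rt_asymp_exp}. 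Likewise $P(t)$ is differentiable at $t_0$ (integrate \eqref{Rt_asymp_exp} over $\gamma$), hence so is $U(t)$ with $U(t_0)=I$ and $\dot{U^{-1}}(t_0)=-\dot U(t_0)$, cf.\ \cite[Section I.4.6]{Kato}. Substituting these expansions into $P(t_0)U(t)^{-1}\bigl(H_tP(t)\bigr)U(t)P(t_0)$ and collecting orders: the $0$-th order term is $P(t_0)H_{t_0}P(t_0)=\la P(t_0)$ (here simplicity of $\la$ is used, so there is no eigennilpotent), and the two $\dot U$-contributions combine as $\la P(t_0)\bigl(\dot{U^{-1}}(t_0)+\dot U(t_0)\bigr)P(t_0)=0$; hence the first-order coefficient is exactly $P(t_0)\dot X P(t_0)$.

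It then remains to evaluate $P(t_0)\dot X P(t_0)=-\tfrac{1}{2\pi i}\oint_\gamma\zeta\,P(t_0)\dot R_{t_0}(\zeta)P(t_0)\,d\zeta$ term by term. The tools are: the identity \eqref{identity1} in the form $P(t_0)R_{t_0}(\zeta)=R_{t_0}(\zeta)P(t_0)=(\la-\zeta)^{-1}P(t_0)$; the relation $H_{t_0}^*=\tau H_{t_0}\tau$ coming from \eqref{N*tau} and \eqref{V*tau}, which gives $R_{t_0}(\bar\zeta)^*=\tau R_{t_0}(\zeta)\tau$; the twisted self-adjointness $P(t_0)\tau=\tau P(t_0)^*$; and that $\la\in\bbR$ (this is already used, for instance, to make sense of $(\tau P(t))^*=\tau P(t)$, which needs the contour $\gamma$ to be symmetric about $\bbR$). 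After pushing the flanking $P(t_0)$'s through $R_{t_0}(\zeta)$ on the right and through $\tau\bigl(\T_{t_0}R_{t_0}(\bar\zeta)\bigr)^*$ on the left, each of the three terms of $\dot R_{t_0}(\zeta)$ reduces to a scalar $(\la-\zeta)^{-2}$ times a $\zeta$-independent operator, and the residue $\tfrac{1}{2\pi i}\oint_\gamma\zeta(\la-\zeta)^{-2}\,d\zeta=1$ reads off $P(t_0)\dot V_{t_0}P(t_0)$ from the first term and the projection- and trace-terms from the second and third. In the latter two, the conjugation by $\tau$ together with $\T_{t_0}\tau=\sigma\T_{t_0}$, where $\sigma=\bigl(\begin{smallmatrix}0&I_{\fH^2}\\ I_{\fH^2}&0\end{smallmatrix}\bigr)$ is the block-swap on $\fH^4$, is precisely what converts $(\dot P_{t_0} J\oplus -\dot Q_{t_0} J)$ into $(\dot Q_{t_0}\oplus\dot P_{t_0})\cJ$ and leaves the $\cJ$-term in place, the overall signs being arranged as in \eqref{operator_expansion}.

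I expect the main obstacle to be exactly this last bookkeeping: keeping straight the three layers of adjoints (the resolvent viewed in $\cB(\cH^2,\cH_+^2)$, the trace $\T_{t_0}\in\cB(\cH_+^2,\fH^4)$, and the composite $\T_{t_0}R_{t_0}(\bar\zeta)\in\cB(\cH^2,\fH^4)$), together with the $\tau$- and $\sigma$-conjugations, so that the $P\leftrightarrow Q$ swap between \eqref{Rt_asymp_exp} and \eqref{operator_expansion} and all the signs come out correctly. The differentiation-under-the-integral step and the order-collecting step are routine given Theorem~\ref{thm:resolvent_regularity}(3) and \cite[Section I.4.6]{Kato}; modulo the sign/swap tracking, the argument is the one in \cite[Lemma 3.24]{LS20first} with $\tau$-twisted self-adjointness replacing self-adjointness throughout.
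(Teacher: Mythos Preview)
Your overall strategy is correct and is a genuine shortcut compared to the paper. The paper does \emph{not} argue via differentiability of $U(t)$ and the cancellation $\dot{U^{-1}}(t_0)+\dot U(t_0)=0$. Instead it first derives an expansion of $P(t_0)U(t)^{-1}R_t(\zeta)U(t)P(t_0)$ by inserting $I=P(t_0)+(I-P(t_0))$ on both sides of $R_t(\zeta)$ (``Step~3''), after establishing in Steps~1--2 expansions of $P(t_0)P(t)P(t_0)$, $P(t_0)U(t)^{\pm1}P(t_0)$, $(I-P(t_0))U(t)P(t_0)$ and $P(t_0)U(t)^{-1}(I-P(t_0))$; three of the four resulting terms are $o(t-t_0)$ and the surviving one is $(\la-\zeta)^{-2}$ times the desired bracket, which is then integrated against $-\zeta/2\pi i$ (``Step~4''). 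Your route bypasses the cross-term bookkeeping entirely by using $H_{t_0}P(t_0)=P(t_0)H_{t_0}=\la P(t_0)$ and $U(t_0)=I$; both approaches use simplicity in the same way (no eigennilpotent, i.e.\ \eqref{identity1}).

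One correction to your last paragraph: the $\sigma$-swap you invoke is spurious. Pushing $P(t_0)$ through $\tau\bigl(\T_{t_0}R_{t_0}(\bar\zeta)\bigr)^*$ only uses $P(t_0)\tau=\tau P(t_0)^*$ and $R_{t_0}(\bar\zeta)P(t_0)=(\la-\bar\zeta)^{-1}P(t_0)$; no block-swap on $\fH^4$ enters, and the outcome is $(\la-\zeta)^{-2}\tau(\T_{t_0}P(t_0))^*\bigl(\dot P_{t_0}J\oplus(-\dot Q_{t_0}J)\bigr)\T_{t_0}P(t_0)$, with $\dot P,\dot Q$ in the \emph{same} order as in \eqref{Rt_asymp_exp}. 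This is exactly what the paper obtains in its Step~3 expansion \eqref{res_op_exp} and what the operator $T^{(1)}$ in Theorem~\ref{thm:Hadamard_via_resolvents} has; the order $(\dot Q_{t_0}\oplus\dot P_{t_0})$ in the displayed statement \eqref{operator_expansion} is a typographical slip in the paper, not something your calculation needs to manufacture.
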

	\begin{proof}
		We expand the left hand side by making use of the resolvent expansion \eqref{Rt_asymp_exp} for $t$ near $t_0$. Multiplying \eqref{Rt_asymp_exp} on the right by $P(t_0)$ and using the identity 
		\begin{equation}\label{RPPR}
			R_{t_0}(\zeta)P(t_0) = P(t_0)R_{t_0}(\zeta) = (\la-\zeta)^{-1} P(t_0),
		\end{equation} 
		we find that 
		\begin{align}
			\begin{split}\label{Rt_asymp_exp_Pt0}
				R_t(\zeta)P(t_0) \underset{t\to t_0}{=}& (\la-\zeta)^{-1} P(t_0) + (\la-\zeta)^{-1} \Big ( - R_{t_0}(\zeta) \dot V_{t_0}  P(t_0)  \\
				&\qquad + \tau\left ( \T_{t_0} R_{t_0}(\bar\zeta)   \right )^*  \big (\dot P_{t_0} J \oplus  -\dot Q_{t_0} J\big ) \T_{t_0} P(t_0) \\
				& \qquad \qquad +\tau\left ( \T_{t_0} R_{t_0}(\bar\zeta)   \right )^*  \cJ \dot\T_{t_0} P(t_0)\Big ) (t-t_0) +o(t-t_0).
			\end{split}
		\end{align}
		Similarly, multiplying \eqref{Rt_asymp_exp} on the left by $P(t_0)$, using \eqref{RPPR} and that $P(t_0)\tau = \tau P(t_0)^*$, we have
		\begin{align}
			\begin{split}\label{Pt0_Rt_asymp_exp}
				P(t_0)R_t(\zeta) \underset{t\to t_0}{=}& (\la-\zeta)^{-1} P(t_0) + (\la-\zeta)^{-1} \Big ( - P(t_0) \dot V_{t_0}  R_{t_0}(\zeta) \\
				&\qquad + \tau\left ( \T_{t_0} P(t_0)   \right )^*  \big (\dot P_{t_0} J \oplus  -\dot Q_{t_0} J\big ) \T_{t_0} R_{t_0}(\zeta) \\
				& \qquad \qquad +\tau\left ( \T_{t_0} P(t_0)  \right )^*  \cJ \dot\T_{t_0} R_{t_0}(\zeta) \Big ) (t-t_0) +o(t-t_0).
			\end{split}
		\end{align}
		The proof is split into several steps.
		
		\textbf{Step 1.} We have
			\begin{equation}\label{step1}
				P(t_0) P(t)P(t_0) \underset{t\to t_0}{=} P(t_0) + o(t-t_0).
			\end{equation}
		\begin{proof}
			For any continuous $F:\gamma \to \cB(\cH^2,\fH^4)$ we have
			\begin{equation*}
				\left ( \int_\gamma F(\zeta) d\zeta\right )^* = -\int_\gamma (F(\bar\zeta))^* d\zeta.
			\end{equation*}
			Applying this to $F(\zeta)=\f{1}{2\pi i}(\la-\zeta)^{-1}\T_{t_0}R_{t_0}(\zeta)$, and using \eqref{S_red_res}, \eqref{complex_integral_trace}, we have
			\begin{align*}
				\int_\gamma \left (\f{1}{2\pi i}(\la-\bar\zeta)^{-1}\T_{t_0}R_{t_0}(\bar\zeta)\right )^* d\zeta = \left (-\int_\gamma \f{1}{2\pi i}(\la-\zeta)^{-1}\T_{t_0} R_{t_0} (\zeta) d\zeta\right )^* = (\T_{t_0} S)^*.
			\end{align*}
			Using this, multiplying both sides of \eqref{Rt_asymp_exp_Pt0} by $-\f{1}{2\pi i}$ and integrating over $\gamma$, we obtain
			\begin{align}
				\begin{split}\label{PtPt0}
				P(t) P(t_0) &\underset{t\to t_0}{=} P(t_0) + \Big ( -S \dot{V}_{t_0} P(t_0)  + \tau \left ( \T_{t_0} S \right )^*\big (\dot P_{t_0} J \oplus  -\dot Q_{t_0} J\big ) \T_{t_0} P(t_0) \\
				& \qquad \qquad \qquad \qquad \qquad + \tau \left ( \T_{t_0} S \right )^*\cJ \dot\T_{t_0} P(t_0)\Big )(t-t_0) + o(t-t_0).			
				\end{split}
			\end{align}
			Similarly, multiplying both sides of \eqref{Pt0_Rt_asymp_exp} by $-\f{1}{2\pi i}$ and integrating over $\gamma$, we obtain
			\begin{align}
				\begin{split}\label{Pt0Pt}
					P(t_0) P(t) &\underset{t\to t_0}{=}  P(t_0) + \Big ( - P(t_0) \dot{V}_{t_0} S  + \tau \left ( \T_{t_0} P(t_0)\right )^*\big ( \dot P_{t_0} J \oplus  -\dot Q_{t_0} J\big ) \T_{t_0} S  \\
					&\qquad \qquad \qquad \qquad \qquad + \tau \left ( \T_{t_0} P(t_0)\right )^*\cJ\T_{t_0} S \Big )(t-t_0) + o(t-t_0).		
				\end{split}
			\end{align}
			Multiplying on the right by $P(t_0)$ and using $SP(t_0)=0$, we arrive at \eqref{step1}.
		\end{proof}
		\textbf{Step 2.} We have
			\begin{align}
				P(t_0) U(t) P(t_0) &\underset{t\to t_0}{=} P(t_0) + o(t-t_0), \label{step2_PUP}\\
				P(t_0) U(t)^{-1} P(t_0) & \underset{t\to t_0}{=} P(t_0) + o(t-t_0), \label{step2_PU-1P}\\
%				\left (I - P(t_0) \right )U(t) P(t_0) &= \left ( P(t_0)U(t)^{-1} (I - P(t_0)\right )^*,  \nonumber \\
				\begin{split}\label{I-PUP}
				\left (I - P(t_0) \right )U(t) P(t_0) &\underset{t\to t_0}{=} (I-P(t_0)) \Big ( -S \dot{V}_{t_0} P(t_0)  + \tau \left ( \T_{t_0} S \right )^*\big (\dot P_{t_0} J \oplus  -\dot Q_{t_0} J\big ) \T_{t_0} P(t_0)  \\
				&\hspace{-4em} \qquad \qquad\qquad + \tau \left ( \T_{t_0} S \right )^*\cJ \dot\T_{t_0} P(t_0)\Big )(t-t_0) + o(t-t_0),				
				\end{split} \\
				\begin{split}\label{I-PU-1P}
					P(t_0) U(t)^{-1} (I- P(t_0)) &\underset{t\to t_0}{=} \Big ( -P(t_0) \dot{V}_{t_0} S + \tau \left ( \T_{t_0} P(t_0) \right )^*\big (\dot P_{t_0} J \oplus  -\dot Q_{t_0} J\big ) \T_{t_0} S\\
					&\hspace{-4em} \qquad \qquad\qquad + \tau \left ( \T_{t_0} P(t_0) \right )^*\cJ \dot\T_{t_0} S\Big )(I-P(t_0))(t-t_0) + o(t-t_0).
				\end{split} 
			\end{align}
			\begin{proof}
				Using \eqref{Pt_defn} and \eqref{resolvent_difference_norm}, we have 
				\begin{equation*}
					D(t) = P(t) - P(t_0) = -\f{1}{2\pi i}\int_{\gamma} R_t(\zeta) - R_{t_0}(\zeta) d\zeta \underset{t\to t_0}{=} \mathcal{O} (t-t_0),
				\end{equation*}
				hence
				\begin{align*}
					(I-D(t)^2)^{-1/2}  \underset{t\to t_0}{=}  I + \cO(|t-t_0|^2).
				\end{align*}
				Thus
				\begin{align}
					U(t) &= (I - D(t)^2)^{-1/2}\left ((I-P(t) )(I-P(t_0)) +P(t)P(t_0)\right ),  \no \\
					&\hspace{-2mm}\underset{t\to t_0}{=} \left ((I-P(t) )(I-P(t_0)) +P(t)P(t_0)\right ) + o(t-t_0),		 \label{UDP} \\			
					U(t)^{-1} &= \left ((I-P(t_0) )(I-P(t)) +P(t_0)P(t)\right )(I - D(t)^2)^{-1/2},  \no \\
					&\hspace{-2mm}\underset{t\to t_0}{=} \left ((I-P(t_0) )(I-P(t)) +P(t_0)P(t)\right ) + o(t-t_0).\label{U-1DP}
				\end{align}
				Using \eqref{UDP}, \eqref{U-1DP} and \eqref{step1} we find that
				\begin{align*}
					P(t_0)U(t)P(t_0) \hspace{-0.5mm}\underset{t\to t_0}{=} P(t_0)P(t)P(t_0) + o(t-t_0) \hspace{-0.5mm}\underset{t\to t_0}{=} P(t_0) +o(t-t_0), \\
					P(t_0)U(t)^{-1}P(t_0) \hspace{-0.5mm}\underset{t\to t_0}{=} P(t_0)P(t)P(t_0) + o(t-t_0) \hspace{-0.5mm}\underset{t\to t_0}{=} P(t_0) +o(t-t_0),
				\end{align*}
				as required. On the other hand, from \eqref{UDP} and \eqref{U-1DP} we have 
				\begin{align*}
					\left (I-P(t_0)\right )U(t)P(t_0) \hspace{-0.5mm}\underset{t\to t_0}{=} \left (I-P(t_0)\right )P(t)P(t_0) + o(t-t_0), \\
					P(t_0)U(t)^{-1}\left (I-P(t_0)\right ) \hspace{-0.5mm}\underset{t\to t_0}{=} P(t_0)  P(t)\left (I-P(t_0)\right ) + o(t-t_0).
				\end{align*}
				Thus, multiplying \eqref{PtPt0} on the left by $(I-P(t_0))$ yields \eqref{I-PUP}, while multiplying  \eqref{Pt0Pt} on the right by $(I-P(t_0))$ yields \eqref{I-PU-1P}.
			\end{proof}
		\textbf{Step 3.} We have
			\begin{align}
				\begin{split}\label{res_op_exp}
				P(t_0)&U^{-1}(t) R_t(\zeta) U(t) P(t_0) \underset{t\to t_0}{=} (\la-\zeta)^{-1} P(t_0) \\
				&+ (\la-\zeta)^{-2} \Big( - P(t_0) \dot V_{t_0} P(t_0) + \tau \left (\T_{t_0}P(t_0)\right )^*\left (\dot P_{t_0} J \oplus -\dot Q_{t_0} J \right ) \T_{t_0} P(t_0) \\
				& \qquad \qquad \qquad \qquad + \tau \left ( \T_{t_0} P(t_0)\right )^*J\dot \T_{t_0} P(t_0)  \Big )(t-t_0) + o(t-t_0).
				\end{split}
			\end{align}
		\begin{proof}
			Sandwiching the factor $R_t(\zeta)$ in the left hand side by $ P(t_0) + (I- P(t_0))$, we have
			\begin{align*}
				\begin{split}
					P(t_0)U^{-1}(t) R_t(\zeta) U(t) P(t_0) &= P(t_0)U^{-1}(t) \Big( P(t_0) + (I- P(t_0)) \Big) R_t(\zeta) \\ 
					& \qquad \qquad \qquad \qquad \times \Big( P(t_0) + (I- P(t_0)) \Big) U(t) P(t_0), 			
				\end{split} \\
				& \eqqcolon I + II + III + IV.
			\end{align*}
			We compute each term individually. We have
			\begin{align*}
				I \coloneqq P(t_0)U^{-1}(t) (I- P(t_0)) \times (I- P(t_0)) R_t(\zeta) P(t_0) \times P(t_0)U(t) P(t_0)\underset{t\to t_0}{=} o(t-t_0),
			\end{align*}
			which follows from \eqref{Rt_asymp_exp_Pt0}, \eqref{step2_PUP} and \eqref{I-PU-1P}, since the terms $ (I- P(t_0)) R_t(\zeta) P(t_0) $ and  $P(t_0)U^{-1}(t) (I- P(t_0)) $ are both $\cO(t-t_0)$. We similarly infer that
			\begin{align*}
				II \coloneqq P(t_0)U^{-1}(t)  P(t_0) \times P(t_0) R_t(\zeta)(I- P(t_0)) \times (I- P(t_0))U(t) P(t_0) \underset{t\to t_0}{=} o(t-t_0),
			\end{align*}
			using \eqref{Pt0_Rt_asymp_exp}, \eqref{step2_PU-1P} and \eqref{I-PUP}, and 
			\begin{align*}
				III \coloneqq P(t_0)U^{-1}(t)  (I- P(t_0)) \times  R_t(\zeta) \times (I- P(t_0))U(t) P(t_0) \underset{t\to t_0}{=} o(t-t_0)
			\end{align*}
			by \eqref{I-PUP} and \eqref{I-PU-1P}. Finally, the term
			\begin{align}
				IV \coloneqq P(t_0)U^{-1}(t) P(t_0) \times P(t_0) R_t(\zeta)P(t_0) \times P(t_0)U(t) P(t_0)
			\end{align}
			admits the sought after expansion \eqref{res_op_exp}, upon using \eqref{Rt_asymp_exp_Pt0}, \eqref{RPPR}, \eqref{step2_PUP} and \eqref{step2_PU-1P}.
		\end{proof}
		\textbf{Step 4.} Recalling the identities
		\begin{equation}
			(\cN_t + V_t) P(t) \coloneqq \f{-1}{2\pi i}\int_{\gamma}^{}\zeta R_t(\zeta) d\zeta, \qquad \f{1}{2\pi i} \int_{\gamma}^{} \zeta(\la-\zeta)^{-1}d\zeta =1, 
		\end{equation}
		multiplying \eqref{res_op_exp} by $-\zeta/2\pi i$ and integrating over $\gamma$, we arrive at \eqref{operator_expansion}.
	\end{proof}
	\begin{theorem}\label{thm:Hadamard_via_resolvents}
		Assume \cref{hypo:simplicity} and that the mappings $t \mapsto \T_t, t\mapsto V_t,t\mapsto P_t, t \mapsto Q_t,$ are differentiable at $t_0$. We define
		\begin{equation*}
			T^{(1)} \coloneqq P(t_0) \dot V_{t_0} P(t_0) - \tau \left (\T_{t_0} P(t_0)\right )^* \left ( \dot P_{t_0} J \oplus  -\dot Q_{t_0} J \right )\T_{t_0} P(t_0) - \tau \left (\T_{t_0} P(t_0)\right )^*\cJ \dot \T_{t_0} P(t_0),
		\end{equation*}
		and denote the eigenvalue and eigenvector of this one-dimensional operator by $\la_{t_0}^{(1)}$ and $\{ \mathbf{u}_{t_0} \} \subset \ran P(t_0) = \ker (\cN_{t_0} + V_{t_0} - \la)$, respectively. Suppose further that $\lambda\in\bbR$ then for $t$ near $t_0$, the eigenvalue $\la(t)$ of $\cN_t + V_t$ satisfies the asymptotic formula
		\begin{equation}\label{la_asymp_expansion}
			\la(t) = \la + \la_{t_0}^{(1)}(t-t_0) +o(t-t_0).
		\end{equation}
		Moreover, one has
		\begin{equation}\label{Had_fmla_via_res}
			\la'(t_0) = \f{\langle \tau \dot{V}_{t_0} \mathbf{u}_{t_0}, \mathbf{u}_{t_0}\rangle_{\cH^2} + \Om \left (( \dot P_{t_0} \oplus \dot Q_{t_0} )\T_{t_0}\mathbf{u}_{t_0}, \T_{t_0}\mathbf{u}_{t_0}\right ) + \Om \left (\T_{t_0}\mathbf{u}_{t_0}, \dot{\T}_{t_0}\mathbf{u}_{t_0}\right )}{ \langle \tau \mathbf{u}_{t_0}, \mathbf{u}_{t_0}\rangle_{\cH^2}},
		\end{equation}
		where $\langle \tau \mathbf{u}_{t_0}, \mathbf{u}_{t_0}\rangle_{\cH^2}\neq 0$.
	\end{theorem}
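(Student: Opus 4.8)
The plan is to extract the eigenvalue asymptotics directly from the operator expansion \eqref{operator_expansion} established in the preceding lemma, and then to identify the leading coefficient by testing against a suitable vector. First I would record two elementary facts valid for $t$ near $t_0$: since (by \cref{thm:resolvent_regularity}) the contour $\gamma$ encloses a single, simple eigenvalue $\la(t)$ of $\cN_t+V_t$, the Riesz projection $P(t)$ has rank one and commutes with $\cN_t+V_t$, so $(\cN_t+V_t)P(t)=P(t)(\cN_t+V_t)P(t)=\la(t)P(t)$; and the transformation operator satisfies $U(t)P(t_0)=P(t)U(t)$, hence $U(t)^{-1}P(t)U(t)=P(t_0)$. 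Combining these gives
\[
P(t_0)U(t)^{-1}(\cN_t+V_t)P(t)U(t)P(t_0)=\la(t)\,P(t_0),
\]
so \eqref{operator_expansion} becomes $\la(t)P(t_0)=\la P(t_0)+T^{(1)}(t-t_0)+o(t-t_0)$ in $\cB(\cH^2)$.

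Next I would check that each summand of $T^{(1)}$ has range inside $\ran P(t_0)=\spn\{\mathbf{u}_{t_0}\}$: this is clear for $P(t_0)\dot V_{t_0}P(t_0)$, while for the two boundary terms it follows because $(\T_{t_0}P(t_0))^*=P(t_0)^*\T_{t_0}^*$ has range in $\ran P(t_0)^*=\tau\,\ran P(t_0)=\spn\{\tau\mathbf{u}_{t_0}\}$ (using $(\tau P(t_0))^*=\tau P(t_0)$), which $\tau$ carries back to $\spn\{\mathbf{u}_{t_0}\}$. Hence $T^{(1)}$ really is one-dimensional and $T^{(1)}\mathbf{u}_{t_0}=\la_{t_0}^{(1)}\mathbf{u}_{t_0}$; evaluating the displayed identity on $\mathbf{u}_{t_0}$ yields $\la(t)=\la+\la_{t_0}^{(1)}(t-t_0)+o(t-t_0)$, which is \eqref{la_asymp_expansion}, and in particular $\la'(t_0)=\la_{t_0}^{(1)}$.

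For \eqref{Had_fmla_via_res} I would pair $T^{(1)}\mathbf{u}_{t_0}=\la'(t_0)\mathbf{u}_{t_0}$ with $\tau\mathbf{u}_{t_0}$, that is, use $\la'(t_0)\langle\tau\mathbf{u}_{t_0},\mathbf{u}_{t_0}\rangle_{\cH^2}=\langle\tau T^{(1)}\mathbf{u}_{t_0},\mathbf{u}_{t_0}\rangle_{\cH^2}$ and expand the right-hand side summand by summand. With $P(t_0)\mathbf{u}_{t_0}=\mathbf{u}_{t_0}$, $\tau^2=I$ and $\tau^*=\tau$, the $\dot V$-term contributes $\langle\tau\dot V_{t_0}\mathbf{u}_{t_0},\mathbf{u}_{t_0}\rangle_{\cH^2}$; cancelling $\tau^2$ and transferring $(\T_{t_0}P(t_0))^*$ by adjunction turns the other two terms into pairings in $\fH^4$ of the form $\langle\cJ(\cdot)\,\T_{t_0}\mathbf{u}_{t_0},\T_{t_0}\mathbf{u}_{t_0}\rangle_{\fH^4}=\Om(\cdot,\cdot)$, the third of which is $\Om(\T_{t_0}\mathbf{u}_{t_0},\dot\T_{t_0}\mathbf{u}_{t_0})$ after using antisymmetry of $\Om$. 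The step I expect to be the only genuinely delicate point is matching the $\dot P_{t_0},\dot Q_{t_0}$ term with $\Om\big((\dot P_{t_0}\oplus\dot Q_{t_0})\T_{t_0}\mathbf{u}_{t_0},\T_{t_0}\mathbf{u}_{t_0}\big)$: since $\mathbf{u}_{t_0}\in\dom\cN_{t_0}$, \eqref{TtdomNt} gives $\T_{t_0}\mathbf{u}_{t_0}\in\ran P_{t_0}\times\ran Q_{t_0}$, and then differentiating the Lagrangian constraint $P_tJP_t=0$ at $t_0$ (so $\dot P_{t_0}JP_{t_0}=-P_{t_0}J\dot P_{t_0}$, and likewise for $Q_t$) together with $P_{t_0}^*=P_{t_0}$ converts $\langle\dot P_{t_0}J\mathbf{f},\mathbf{f}\rangle$ into $-\langle J\dot P_{t_0}\mathbf{f},\mathbf{f}\rangle$ for $\mathbf{f}\in\ran P_{t_0}$, which is exactly what is needed. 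Summing the three contributions produces the numerator of \eqref{Had_fmla_via_res}.

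It remains to show $\langle\tau\mathbf{u}_{t_0},\mathbf{u}_{t_0}\rangle_{\cH^2}\neq0$, which is where $\la\in\bbR$ is used. By \eqref{N*tau} and \eqref{V*tau} we have $(\cN_{t_0}+V_{t_0})^*=\tau(\cN_{t_0}+V_{t_0})\tau$, so $(\cN_{t_0}+V_{t_0})^*(\tau\mathbf{u}_{t_0})=\bar\la\,\tau\mathbf{u}_{t_0}=\la\,\tau\mathbf{u}_{t_0}$, and hence $\tau\mathbf{u}_{t_0}$ spans the one-dimensional space $\ran P(t_0)^*$. If $\langle\tau\mathbf{u}_{t_0},\mathbf{u}_{t_0}\rangle_{\cH^2}$ vanished, then $\mathbf{u}_{t_0}$ would be orthogonal to $\ran P(t_0)^*$, i.e. $\mathbf{u}_{t_0}\in(\ran P(t_0)^*)^{\perp}=\ker P(t_0)$, contradicting $P(t_0)\mathbf{u}_{t_0}=\mathbf{u}_{t_0}\neq0$. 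Dividing $\la'(t_0)\langle\tau\mathbf{u}_{t_0},\mathbf{u}_{t_0}\rangle_{\cH^2}=\langle\tau T^{(1)}\mathbf{u}_{t_0},\mathbf{u}_{t_0}\rangle_{\cH^2}$ by $\langle\tau\mathbf{u}_{t_0},\mathbf{u}_{t_0}\rangle_{\cH^2}$ then gives \eqref{Had_fmla_via_res}. The conceptual heavy lifting having already been carried out in the resolvent expansion of the preceding lemma, the work here is essentially bookkeeping beyond the Lagrangian-constraint manipulation flagged above.
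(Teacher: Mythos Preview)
Your proposal is correct and follows essentially the same approach as the paper: reduce to the one-dimensional operator via \eqref{operator_expansion}, pair $T^{(1)}\mathbf{u}_{t_0}=\la_{t_0}^{(1)}\mathbf{u}_{t_0}$ with $\tau\mathbf{u}_{t_0}$, and use the Lagrangian identities to simplify the boundary terms. The only cosmetic differences are that the paper cites finite-dimensional perturbation theory \`a la Kato rather than reading off $\la(t)P(t_0)$ directly (your way is cleaner in the rank-one case), and the paper proves $\langle\tau\mathbf{u}_{t_0},\mathbf{u}_{t_0}\rangle_{\cH^2}\neq 0$ via the Fredholm alternative (no generalised eigenvector) rather than your projection argument; one minor point is that ``antisymmetry of $\Om$'' holds only up to complex conjugation, so the paper explicitly notes $\Om(\dot\T_{t_0}\mathbf{u}_{t_0},\T_{t_0}\mathbf{u}_{t_0})\in\R$ by differentiating $t\mapsto\Om(\T_t\mathbf{u}_{t_0},\T_t\mathbf{u}_{t_0})$ at $t_0$.
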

	\begin{proof}
		Recalling that $U(t)$ is an isomorphism between $\ran P(t_0)$ and $\ran P(t)$, see \cite[Section I.4.6]{Kato}, \cite[Proposition 2.18]{F04}, we note that $(\cN_t + V_t)|_{\ran P(t)}$ is similar to 
		\[
			P(t_0) U(t)^{-1}(\cN_t + V_t)P(t)U(t) P(t_0) |_{\ran P(t_0)}
		\]
		for $t$ near $t_0$. Thus the eigenvalues of these operators coincide for all $t$ near $t_0$, see \cite[Section I.5.7]{Kato}. Asymptotically expanding the eigenvalues of the latter using the finite dimensional first order perturbation theory, specifically \cite[Theorem II.5.11]{Kato}, we deduce \eqref{la_asymp_expansion}.
		 
		 Next, applying $\langle \cdot, \tau \mathbf{u}_{t_0}\rangle$ to the eigenvalue equation $T^{(1)}\mathbf{u}_{t_0} = \la_{t_0}^{(1)}\mathbf{u}_{t_0}$, we find
		 \begin{align*}
		 	&\la_{t_0}^{(1)} \langle \mathbf{u}_{t_0}, \tau \mathbf{u}_{t_0}\rangle_{\cH^2} =  \left \langle  P(t_0) \dot V_{t_0} P(t_0) \mathbf{u}_{t_0}, \tau \mathbf{u}_{t_0}\right \rangle_{\cH^2} \\
		 	& \quad - \left \langle \tau \left (\T_{t_0} P(t_0)\right )^* \left (  \dot P_{t_0} \oplus \dot Q_{t_0} \right ) \cJ\T_{t_0} P(t_0)\mathbf{u}_{t_0}, \tau \mathbf{u}_{t_0}\right \rangle_{\cH^2}  -\left \langle \tau \left (\T_{t_0} P(t_0)\right )^* \cJ \dot\T_{t_0} P(t_0)\mathbf{u}_{t_0}, \tau \mathbf{u}_{t_0}\right \rangle_{\cH^2},  \\
		 	&\,\,=  \left \langle  \dot V_{t_0}\mathbf{u}_{t_0}, P(t_0) ^*\tau \mathbf{u}_{t_0}\right \rangle_{\cH^2}  - \left \langle \left (\dot P_{t_0}\oplus \dot Q_{t_0} \right )\cJ \T_{t_0} \mathbf{u}_{t_0}, \T_{t_0} \mathbf{u}_{t_0}\right \rangle_{\fH^4}  -\left \langle  \cJ \dot\T_{t_0} \mathbf{u}_{t_0},  \T_{t_0} \mathbf{u}_{t_0}\right \rangle_{\fH^4}  \\
		 	&\,\,=  \left \langle  \tau\dot V_{t_0}\mathbf{u}_{t_0},  \mathbf{u}_{t_0}\right \rangle_{\cH^2}  + \left \langle \cJ \left (\dot P_{t_0}\oplus \dot Q_{t_0} \right ) \T_{t_0} \mathbf{u}_{t_0}, \T_{t_0} \mathbf{u}_{t_0}\right \rangle_{\fH^4}  -\Om\left( \dot\T_{t_0} \mathbf{u}_{t_0},  \T_{t_0} \mathbf{u}_{t_0}\right), \\
		 	&\,\,=  \left \langle  \tau\dot V_{t_0}\mathbf{u}_{t_0},  \mathbf{u}_{t_0}\right \rangle_{\cH^2}  + \Om\left(\left (\dot P_{t_0}\oplus \dot Q_{t_0} \right )\T_{t_0} \mathbf{u}_{t_0},  \T_{t_0} \mathbf{u}_{t_0}\right)  +\Om\left( \T_{t_0} \mathbf{u}_{t_0},  \dot\T_{t_0} \mathbf{u}_{t_0}\right),
		 \end{align*}
		 where in the second last line, we used that $P(t_0)^*\tau = \tau P(t_0)$ , and that $\ran(P_t\oplus Q_t)$ is Lagrangian, hence $\cJ (P_t\oplus Q_t) + (P_t\oplus Q_t) \cJ = \cJ$ and $\cJ (\dot P_{t_0}\oplus \dot Q_{t_0}) = - (\dot P_{t_0}\oplus \dot Q_{t_0}) \cJ $. In the last line, we used that $ \Om\left( \dot\T_{t_0} \mathbf{u}_{t_0},  \T_{t_0} \mathbf{u}_{t_0}\right)\in \R$, which follows from differentiating $t\mapsto\Om\left( \T_{t} \mathbf{u}_{t_0},  \T_{t} \mathbf{u}_{t_0}\right)$ at 
		 $t_0$.
		 
		 Since $\la'(t_0) = \la_{t_0}^{(1)}$, \eqref{Had_fmla_via_res} follows. That $\langle \tau \mathbf{u}_{t_0}, \mathbf{u}_{t_0}\rangle_{\cH^2}\neq 0$ follows from the assumption that $\la$ is simple. Indeed, in this case, there are no generalised eigenvectors, i.e. no solution $\hat{\mathbf{u}}\in\dom(\cN_{t_0} + V_{t_0} -\la)$ to the inhomogeneous equation
		 	\[
		 	(\cN_{t_0} + V_{t_0} -\la) \hat{\mathbf{u}} = \mathbf{u}_{t_0}.
		 	\]
		 	Noting that $\ker(\cN_{t_0}^* + V_{t_0}^*-\la) = \{\tau \mathbf{u}_{t_0}\}$ (since $\lambda\in\bbR$), the conclusion now follows from the Fredholm Alternative (which is applicable due to $\lambda$ being a point in the discrete spectrum of $\cN_{t_0}+V_{t_0}$).		 	
		 \end{proof}

	\section{Hadamard formulas via Lyapunov-Schmidt}\label{sec:Hadamard_crossingforms}
	
	In this section, under certain assumptions we prove the existence of eigenvalue curves $\la(t)\in \spec\left (\cN_{t}+V_{t}\right )$ or $\la\in \spec\left (\cN_{t(\la)}+V_{t(\la)}\right )$, and compute local formulas for their derivatives. The analysis is similar to that in \cite{CCLM23}, however the situation herein differs due to the $t$-dependence of the Lagrangian planes describing the boundary conditions.
	
	Throughout this section we assume Hypothesis \ref{hypo:QPGH_families} and the following.
	\begin{hypo}\label{hypo:basic_assumptions}
		Given $\la_0\in\R$ and $t_0\in(0,1)$, we assume:
		\begin{enumerate}
			\item [\rm{(i)}] $\la_0\notin \esspec(\cN_{t_0}+V_{t_0})$. \vspace{1mm}
			\item [\rm{(ii)}] $\dim\ker (\cN_{t_0}+V_{t_0}-\la_0)=1$, with eigenfunction $\mathbf{u}_{t_0}$.
		\end{enumerate} 
		In addition, there exists an interval $\cI\subset[0,1]$ containing $t_0$ and an integer $n\geq 1$ such that:
		\begin{itemize}
			\item [\rm{(iii)}] the mappings $t\mapsto V_t, t\mapsto P_t, t\mapsto Q_t, t\mapsto \T_t$ are $C^2$ on $\cI$, and
			\item [\rm{(iv)}] $\ker \big( (N^* + V_t^* -\la) \tau\big) \cap \dom(\tau N) =\{0\}$ for all $t$ in $\cI$.
		\end{itemize}
	\end{hypo}

	The main results of this section are the following Hadamard formulas for the first derivatives of geometrically simple eigenvalues; the Maslov crossing forms appearing therein are defined in the next section. 
	
		\begin{theorem}\label{thm:Hadamard}
		Assume \cref{hypo:basic_assumptions} and let $\mathbf{q} = \T_{t_0} \mathbf{u}$.
		\begin{enumerate}
			\item If $\mathfrak{m}_{\la_0} \neq 0$, then there is a $C^1$ curve $\la=\la(t)$ near $t_0$ such that 
			\begin{equation}\label{hadamard1}
				\la(t) \in \spec\left (\cN_{t} + V_{t}\right ), \qquad \la'(t_0) = - \f{\mathfrak{m}_{t_0}(\mathbf{q},\mathbf{q})}{\mathfrak{m}_{\la_0}(\mathbf{q},\mathbf{q})}.
			\end{equation}
			\item If $\mathfrak{m}_{t_0} \neq 0$, then there is a $C^1$ curve $t=t(\la)$ near $\la_0$ such that 
			\begin{equation}\label{hadamard2}
				\la\in\spec\left (\cN_{t(\la)} + V_{t(\la)} \right ), \qquad t'(\la_0) = -\f{\mathfrak{m}_{\la_0}(\mathbf{q},\mathbf{q})}{\mathfrak{m}_{t_0}(\mathbf{q},\mathbf{q})}.
			\end{equation}
		\end{enumerate}
	\end{theorem}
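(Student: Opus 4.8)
The plan is to derive both formulas from the Lyapunov--Schmidt reduction of \cref{prop:M} together with the crossing form computation of \cref{lemma:cross_forms} and the implicit function theorem. First I would invoke \cref{prop:M}: since $\dim\ker(\cN_{t_0}+V_{t_0}-\la_0)=1$ by \cref{hypo:basic_assumptions}(ii), it produces on a neighbourhood of $(\la_0,t_0)$ a scalar-valued function $M(\la,t)$ (the $g\times g$ matrix with $g=1$), which by \cref{hypo:basic_assumptions}(iii) is $C^1$ (indeed $C^2$), whose zero set coincides with the set of $(\la,t)$ for which $\ker(\cN_t+V_t-\la)\neq\{0\}$, and which satisfies $M(\la_0,t_0)=0$ because $\la_0\in\spec(\cN_{t_0}+V_{t_0})$. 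The crucial input is that the first partials of $M$ at the crossing are exactly the Maslov crossing forms evaluated on the one-dimensional intersection spanned by $\mathbf{q}=\T_{t_0}\mathbf{u}_{t_0}$: applying \cref{lemma:cross_forms} along the two coordinate paths $t\mapsto\Upsilon_{\la_0,t}$ and $\la\mapsto\Upsilon_{\la,t_0}$ through $(\la_0,t_0)$ gives, up to a common nonzero factor fixed by the normalisation in \cref{prop:M},
\begin{equation*}
	\partial_\la M(\la_0,t_0) = \mathfrak{m}_{\la_0}(\mathbf{q},\mathbf{q}), \qquad \partial_t M(\la_0,t_0) = \mathfrak{m}_{t_0}(\mathbf{q},\mathbf{q}).
\end{equation*}

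With this in hand, part (1) is immediate from the implicit function theorem: if $\mathfrak{m}_{\la_0}(\mathbf{q},\mathbf{q})\neq 0$ then $\partial_\la M(\la_0,t_0)\neq 0$, so there is a $C^1$ curve $\la=\la(t)$ near $t_0$ with $M(\la(t),t)=0$, hence $\la(t)\in\spec(\cN_t+V_t)$, and implicit differentiation of $M(\la(t),t)\equiv 0$ yields
\begin{equation*}
	\la'(t_0) = -\frac{\partial_t M(\la_0,t_0)}{\partial_\la M(\la_0,t_0)} = -\frac{\mathfrak{m}_{t_0}(\mathbf{q},\mathbf{q})}{\mathfrak{m}_{\la_0}(\mathbf{q},\mathbf{q})}.
\end{equation*}
Part (2) is symmetric: if $\mathfrak{m}_{t_0}(\mathbf{q},\mathbf{q})\neq 0$ then $\partial_t M(\la_0,t_0)\neq 0$, the implicit function theorem gives a $C^1$ curve $t=t(\la)$ near $\la_0$ with $M(\la,t(\la))=0$, i.e.\ $\la\in\spec(\cN_{t(\la)}+V_{t(\la)})$, and $t'(\la_0) = -\partial_\la M/\partial_t M = -\mathfrak{m}_{\la_0}(\mathbf{q},\mathbf{q})/\mathfrak{m}_{t_0}(\mathbf{q},\mathbf{q})$. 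I would add the remark that although the Lyapunov--Schmidt determinant $M$ is only determined up to multiplication by a nowhere-vanishing $C^1$ scalar, both quoted ratios are invariant under such a rescaling, so the formulas are well defined; and that $\mathfrak{m}_{\la_0}(\mathbf{q},\mathbf{q}) = -\langle\tau\mathbf{u}_{t_0},\mathbf{u}_{t_0}\rangle_{\cH^2}$ vanishes precisely when $\la_0$ fails to be algebraically simple, so the hypothesis of part (1) is exactly what guarantees a single-valued eigenvalue curve.

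The main obstacle is the identification of $\partial_\la M$ and $\partial_t M$ with the crossing forms $\mathfrak{m}_{\la_0}$ and $\mathfrak{m}_{t_0}$; everything else (regularity of $M$, applicability of the implicit function theorem, the arithmetic of implicit differentiation) is routine once the correspondence is established. Carrying it out means differentiating the equivalence $\ker(\cN_t+V_t-\la)\neq\{0\}\iff \big(\K_{\la,t}\oplus\cF_t\big)\cap\fD\neq\{0\}$, expressing the intersection condition through a finite-dimensional Grassmannian chart adapted to $\Upsilon_{\la_0,t_0}$, and matching the first-order term with the crossing form formula \eqref{t_crossing_form_intro}--\eqref{la_crossing_form_intro}; this is where the modified Green's identity \eqref{modified_greens_real_la} enters, converting boundary pairings into the bulk pairing $\langle\tau\,\cdot\,,\,\cdot\,\rangle_{\cH^2}$, and where one uses $\la_0\in\R$ so that $\ker(\cN_{t_0}^*+V_{t_0}^*-\la_0)=\spn\{\tau\mathbf{u}_{t_0}\}$. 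I expect this to be the technical heart of the argument, and it is precisely the content of \cref{prop:M} and \cref{lemma:cross_forms}, on which the proof above relies.
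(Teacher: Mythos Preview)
Your proposal is correct and matches the paper's proof essentially line for line: invoke \cref{prop:M} in the scalar case $g=1$, identify $\partial_t M(\la_0,t_0)=\mathfrak{m}_{t_0}(\mathbf{q},\mathbf{q})$ and $\partial_\la M(\la_0,t_0)=\mathfrak{m}_{\la_0}(\mathbf{q},\mathbf{q})$ by comparing with \cref{lemma:cross_forms}, then apply the implicit function theorem. One small sharpening: the identification is exact, not merely ``up to a common nonzero factor'' --- the explicit formulas for $\partial_t M$, $\partial_\la M$ in \cref{prop:M} coincide term-by-term with those for $\mathfrak{m}_{t_0}$, $\mathfrak{m}_{\la_0}$ in \cref{lemma:cross_forms}, so no rescaling caveat is needed.
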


	\subsection{Maslov crossing forms}

	Towards the proof of \cref{thm:Hadamard}, we begin by computing crossing forms. The following discussion mimics that in \cite[\S 4.5]{LS20first}. Recall from \eqref{TtdomNt} that
	\begin{equation}\label{Ttilde2}
		\T_{t} (\dom \cA_t \times \dom \cB_t) = \ran P_t \times \ran Q_t,
	\end{equation}
	and also note that it follows from \eqref{N*} that $\dom(N^*\tau) = \dom A^*\times \dom A^*$. For $\la\in \R$ and $t\in[0,1]$, we consider the following families of subspaces:
	\begin{align}\label{cKcFcD}
		\begin{split}
			\K_{\la,t} &\coloneqq \T_{t}\big(\ker\big((N^* + V_t^*-\la)\tau \big)\big) \subset \fH^4, \\
			\mathcal{F}_t &\coloneqq \ran P_t \times \ran Q_t  \subset \fH^4, \\
			\Upsilon_{\la,t} &\coloneqq \K_{\la,t} \oplus \cF_t \subset \fH^4 \oplus \fH^4, \\
			\mathfrak{D}&\coloneqq \{ \mathbf{q} = (q,q)^\top : q\in \fH^4\} \subset \fH^4 \oplus \fH^4.
		\end{split}
	\end{align}
	Let us note that
	\begin{equation}\label{Lag_intersect_spectral_interp_iff}
		\ker(\cN_t +V_t - \la) \neq \{0\} \,\,\,\iff \,\,\, \K_{\la,t} \cap \cF_t \neq \{0\} \,\,\,\iff \,\,\, \Upsilon_{\la,t}\cap \mathfrak{D}\neq \{0\}.
	\end{equation}
In addition, as in the standard self-adjoint case the Cauchy data plane $\K_{\la,t}$ is Lagrangian for $\lambda\in\bbR$.
\begin{lemma}\label{lemma:_Cauchy_data_plane_is_Lagrangian}
	For all $\la\in\R$, the Cauchy data space $\K_{\la,t}$ is a Lagrangian subspace of $\fH^4$ with respect to the symplectic form $\Omega$ defined by $\cJ$ in \eqref{Om}.
\end{lemma}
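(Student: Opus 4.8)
The plan is to prove that $\K_{\la,t}$ coincides with its $\Om$-annihilator $\K_{\la,t}^\circ$, by checking separately that it is isotropic ($\K_{\la,t}\subseteq\K_{\la,t}^\circ$) and co-isotropic ($\K_{\la,t}^\circ\subseteq\K_{\la,t}$). The first ingredient is a version of the modified Green's identity \eqref{modified_greens_real_la} that absorbs the bounded potential. Since $V_t^*=\tau V_t\tau$ by \eqref{V*tau}, the operator $V_t^*\tau=\tau V_t$ is bounded and self-adjoint, so the terms it contributes cancel in the skew-symmetric combination, and for all $\bu,\bv\in\cH_+^2$ and $\la\in\R$ one obtains
\[
\langle (N^*+V_t^*-\la)\tau\bu,\bv\rangle_{\cH^2}-\langle\bu,(N^*+V_t^*-\la)\tau\bv\rangle_{\cH^2}=\Om(\T_t\bu,\T_t\bv).
\]
Taking $\bu,\bv\in\ker\big((N^*+V_t^*-\la)\tau\big)$ makes the left-hand side vanish, giving $\Om(\T_t\bu,\T_t\bv)=0$ and hence isotropy of $\K_{\la,t}$.

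For co-isotropy I would set $T:=\tau(N+V_t-\la)$ with $\dom T=\dom N$; by the identity above $T$ is a closed, densely defined, symmetric operator with adjoint $T^*=(N^*+V_t^*-\la)\tau$ on the domain $\cH_+^2$, and $\ker\T_t=\dom N$ by \eqref{kerT_domN}. Given $q\in\K_{\la,t}^\circ$, the surjectivity of $\T_t$ from Hypothesis \ref{hypo:QPGH_families} lets me pick $\bu\in\cH_+^2$ with $\T_t\bu=q$; then for every $\bv\in\ker T^*$ the identity gives $\langle T^*\bu,\bv\rangle_{\cH^2}=\Om(q,\T_t\bv)=0$, i.e. $T^*\bu\in(\ker T^*)^\perp=\overline{\ran T}$. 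If $\ran T$ is closed there is $\bw\in\dom N$ with $T\bw=T^*\bu$; then $T^*(\bu-\bw)=0$ and, since $\bw\in\dom N=\ker\T_t$, $\T_t(\bu-\bw)=q$, whence $q\in\T_t(\ker T^*)=\K_{\la,t}$. Equivalently, Hypothesis \ref{hypo:basic_assumptions}(iv) says $\ker T^*\cap\dom N=\ker T=\{0\}$, so $\T_t|_{\ker T^*}$ is injective and $\dim\K_{\la,t}=\dim\ker T^*$; together with isotropy this forces $\K_{\la,t}=\K_{\la,t}^\circ$ once $\dim\ker T^*=\tfrac12\dim\fH^4=2\dim\fH$.

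The hard part is precisely this closed-range (equivalently, dimension) statement at the \emph{real} point $\la$ for the non-self-adjoint minimal operator $N+V_t$, and I expect to handle it exactly as in the self-adjoint prototype \cite[\S 4.5]{LS20first}: the operator $\tau(N+V_t)=\big(A\oplus(-A)\big)+\tau V_t$ is symmetric with both deficiency indices equal to $2\dim\fH$ by \eqref{fH_deficiency_indices} (a bounded self-adjoint perturbation leaves them unchanged), so the range of $T$ has the desired closed range and codimension as soon as $\la$ is kept out of the essential spectrum, which is guaranteed by Hypothesis \ref{hypo:basic_assumptions}(i). Everything else — the two Green's identities, the lift through surjectivity of $\T_t$, and the bookkeeping with $\tau$ — is routine.
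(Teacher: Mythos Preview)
Your proof is correct and, rather than citing a black box, unpacks it: the paper's own proof is a single sentence observing that $(N^*+V_t^*-\la)\tau=(\tau N)^*+V_t^*\tau-\la\tau$ is a bounded self-adjoint perturbation of the adjoint $(\tau N)^*$ of the symmetric operator $\tau N$, and then invoking \cite[Proposition~3.5]{BBvnkF98}. Your argument is precisely the content of that citation made explicit---isotropy from the modified Green identity, and co-isotropy from the closed-range lifting step. The paper's route buys brevity; yours makes transparent exactly where the structure (equal finite deficiency indices, surjective trace, real $\la$) is used, and in particular isolates the closed-range condition as the only nontrivial input.

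One small caveat: you appeal to Hypotheses~\ref{hypo:basic_assumptions}(i) and (iv) to secure closed range and trivial kernel of $T$, but those are stated only at the fixed $(\la_0,t_0)$ (respectively for $t\in\cI$), whereas the lemma asserts the Lagrangian property for \emph{all} $\la\in\R$. Your closed-range argument actually does not need (iv) at all---once $\ran T$ is closed, the lift $\bw\in\dom N=\ker\T_t$ exists regardless of whether $\ker T$ vanishes---so the only genuine requirement is that $\la$ avoid the essential spectrum of the self-adjoint extension $\tau(\cN_t+V_t)$. The paper's cited result presumably absorbs this; in your direct argument, what you have proved is the Lagrangian property at those $(\la,t)$ covered by an essential-spectrum condition, which is all that is used downstream.
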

\begin{proof}
The result follows from the fact that the operator $(N^* + V_t^*-\la)\tau = (\tau N)^* + V_t^*\tau - \la \tau$ is a bounded perturbation of the adjoint $(\tau N)^* $ of a symmetric operator $\tau N $ satisfying the Green's identity \eqref{modified_greens_real_la} with the trace operator $\T=\T_t$ and \cite[Proposition 3.5]{BBvnkF98}.
\end{proof}

To compute crossing forms, it will be convenient to view the Lagrangian plane $\Upsilon_{\la,t}$ as the graph of an infinitesimally symplectic operator, the regularity of which follows from previous results. Let $\Pi_{\la,t}$ be the orthogonal projection onto $\Upsilon_{\la,t}$ so that the mapping $t \mapsto  \Pi_{\la,t}$ is continuously differentiable on $[0,1]$ for each $\la\in\R$, see e.g. \cite[p. 480-481]{LS18}. For $\la\in\R$ and $t_0\in[0,1]$ satisfying \cref{hypo:basic_assumptions}, there is an interval $\sI\subseteq \cI$ centred around $t_0$ and a $C^1$ family of operators $t\mapsto \sM_{\la,t}, t\in \sI$ which is in $C^1(\sI, \cB\left (\Upsilon_{\la,t_0},  (\Upsilon_{\la,t_0})^{\top} \right ))$ with $\sM_{\la,t_0}=0$ such that 
\begin{equation}\label{sM_defn}
	\Upsilon_{\la,t} = \left \{\mathbf{q} + \sM_{\la,t}\mathbf{q} : \mathbf{q} \in \Upsilon_{\la,t_0}  \right \}, \quad t\in \sI,
\end{equation}
see, e.g. \cite[Lemma 3.8]{CJLS14}. Then $(\la_0,t_0)$ is called a conjugate point if $\Upsilon_{\la_0,t_0} \cap \fD \neq \{0\}$ (or either of the equivalent assertions in \eqref{Lag_intersect_spectral_interp_iff} hold). The  $t$-crossing form for the Lagrangian plane $t\mapsto \Upsilon_{\la_0,t}$ with respect to $\mathfrak{D}$ on the finite dimensional intersection $\Upsilon_{\la_0,t_0} \cap \fD$ is defined to be 
\begin{align}\label{define_t_crossing_form}
	\mathfrak{m}_{t_0}(\mathbf{q},\mathbf{p}) \coloneqq \de{}{t}\big|_{t=t_0} \tilde \Om ( \mathbf{q}, \sM_{\la_0,t} \mathbf{p}), \qquad \mathbf{q}, \mathbf{p}\in \Upsilon_{\la_0,t_0}\cap \mathfrak{D},
\end{align}
where $\tilde \Om = \Om \oplus(- \Om)$ is a symplectic form on $\fH^4\oplus \fH^4$. Similarly, for the Lagrangian path $\la\mapsto \Upsilon_{\la,t_0}$, the $\la$-crossing form with respect to $\mathfrak{D}$ is defined as
\begin{align}\label{define_la_crossing_form}
	\mathfrak{m}_{\la_0}(\mathbf{q},\mathbf{p}) \coloneqq \de{}{\la}\big|_{\la=\la_0} \tilde \Om (  \mathbf{q}, \sM_{\la,t_0}\mathbf{p}), \qquad \mathbf{q},\mathbf{p}\in \Upsilon_{\la_0,t_0} \cap \mathfrak{D}.
\end{align}
The following is an analogue of \cite[Lemma 4.19]{LS20first}.
\begin{lemma}\label{lemma:t_la_family}
	Suppose $(\la_0,t_0)$ is a conjugate point, i.e. $\ker(\cN_{t_0} + V_{t_0} -\la_0)\neq \{0\}$, or equivalently $\cK_{\la_0,t_0} \cap \cF_{t_0} \neq \{0\}$. Let $\mathbf{u}_{t_0}\in\ker(\cN_{t_0} + V_{t_0} -\la_0)$. Then there exists an open interval $\sI\subseteq \cI$ around $t_0$, a family $t\mapsto \mathbf{w}_t$ in $C^1(\sI, \cH_+^2)$ and a family $t\mapsto g_t \in \cF_t$ in $C^1(\sI, \fH^4\oplus\fH^4)$ such that 
	\begin{subequations}\label{w_t_g_t_families}
		\begin{align}
			\mathbf{w}_{t_0} &= \mathbf{u}_{t_0}, \quad g_{t_0} = \T_{t_0} \mathbf{u}_{t_0}, \\
			\mathbf{w}_t &\in \ker(N^*+ V_t^* - \la_0)\tau, \\
			(\T_{t} \mathbf{w}_t, g_t)^{\top} &= (\T_{t_0} \mathbf{u}_{t_0}, \T_{t_0} \mathbf{u}_{t_0})^{\top} + \sM_{t,\la_0}(\T_{t_0} \mathbf{u}_{t_0}, \T_{t_0} \mathbf{u}_{t_0})^{\top}. \label{412c}
		\end{align}
	\end{subequations}
	Similarly, under the same assumptions, there exists an open interval interval $\sI\subseteq \cI$ around $\la_0$ and a family $\la\mapsto \mathbf{w}_\la$ in $C^\infty(\sI, \cH_+^2)$ such that 
	\begin{subequations}\label{w_la_family}
		\begin{align}
			\mathbf{w}_{\la_0} &= \mathbf{u}_{t_0},  \\
			\mathbf{w}_\la &\in \ker(N^*+ V_{t_0}^* - \la)\tau, \\
			(\T_{t_0} \mathbf{w}_\la , \T_{t_0} \mathbf u_{t_0})^{\top}&=( \T_{t_0} \mathbf{u}_{t_0}, \T_{t_0} \mathbf u_{t_0})^{\top} + \sM_{t_0,\la} (\T_{t_0} \mathbf{u}_{t_0}, \T_{t_0} \mathbf u_{t_0})^{\top}
		\end{align}
	\end{subequations}
\end{lemma}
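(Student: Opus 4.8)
The plan is to transport the given kernel vector $\mathbf{u}_{t_0}$ along the graph representation \eqref{sM_defn} of the Lagrangian path $\Upsilon_{\la_0,\cdot}$ (resp. $\Upsilon_{\cdot,t_0}$) and then pull the resulting boundary data back through $\T_t$ (resp. $\T_{t_0}$) using the injectivity granted by \cref{hypo:basic_assumptions}(iv). First I would check that $\mathbf{u}_{t_0}$ sits in the correct intersection even though $\cN_{t_0}+V_{t_0}$ is not a restriction of $N^*+V_{t_0}^*$: writing $\mathbf{u}_{t_0}=(u_1,u_2)^\top\in\dom\cA_{t_0}\times\dom\cB_{t_0}$, so that $(\cA_{t_0}+F_{t_0})u_1=\la_0u_2$ and $(\cB_{t_0}+G_{t_0})u_2=-\la_0u_1$, a one-line block computation using $\cA_{t_0},\cB_{t_0}\subset A^*$ gives $(N^*+V_{t_0}^*-\la_0)\tau\mathbf{u}_{t_0}=\big((\cA_{t_0}+F_{t_0})u_1-\la_0u_2,\,-(\cB_{t_0}+G_{t_0})u_2-\la_0u_1\big)^\top=0$, i.e. $\mathbf{u}_{t_0}\in\ker\big((N^*+V_{t_0}^*-\la_0)\tau\big)$; since also $\T_{t_0}\mathbf{u}_{t_0}\in\ran P_{t_0}\times\ran Q_{t_0}=\cF_{t_0}$, the vector $\mathbf{q}_0:=(\T_{t_0}\mathbf{u}_{t_0},\T_{t_0}\mathbf{u}_{t_0})^\top$ lies in $(\K_{\la_0,t_0}\times\cF_{t_0})\cap\fD=\Upsilon_{\la_0,t_0}\cap\fD$.

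Next I would apply \eqref{sM_defn}: on a small interval $\sI\ni t_0$ the vector $\mathbf{q}_0+\sM_{\la_0,t}\mathbf{q}_0$ lies in $\Upsilon_{\la_0,t}=\K_{\la_0,t}\oplus\cF_t$, so it decomposes uniquely along $\fH^4\oplus\fH^4$ as $(\mathbf{a}_t,g_t)^\top$ with $\mathbf{a}_t\in\K_{\la_0,t}$ and $g_t\in\cF_t$; both are $C^1$ in $t$ because $t\mapsto\sM_{\la_0,t}$ is, and at $t_0$ (where $\sM_{\la_0,t_0}=0$) they equal $\T_{t_0}\mathbf{u}_{t_0}$. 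Since $\ker\T_t=\dom(N)=\dom(\tau N)$, \cref{hypo:basic_assumptions}(iv) says $\T_t$ is injective on $\ker\big((N^*+V_t^*-\la_0)\tau\big)$, while surjectivity of this restriction onto $\K_{\la_0,t}$ is the definition of $\K_{\la_0,t}$; hence there is a unique $\mathbf{w}_t\in\ker\big((N^*+V_t^*-\la_0)\tau\big)$ with $\T_t\mathbf{w}_t=\mathbf{a}_t$, and uniqueness at $t_0$ forces $\mathbf{w}_{t_0}=\mathbf{u}_{t_0}$. Together with $g_{t_0}=\T_{t_0}\mathbf{u}_{t_0}$ and \eqref{412c} (which is now just the decomposition above), this gives \eqref{w_t_g_t_families} once $\mathbf{w}_t\in C^1(\sI,\cH_+^2)$ is established.

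That last regularity statement is the main obstacle. Set $L_t:=(N^*+V_t^*-\la_0)\tau=(\tau N)^*+\tau V_t-\la_0\tau\in\cB(\cH_+^2,\cH^2)$; the family $t\mapsto L_t$ is $C^2$ (the $(\tau N)^*$-part is fixed and $V_t$ is $C^2$ by \cref{hypo:basic_assumptions}(iii)), by \cref{hypo:basic_assumptions}(i) it is Fredholm for $t$ near $t_0$, and $\dim\ker L_t=\dim\K_{\la_0,t}$ (by the injectivity above) is finite (as $\dim\fH<\infty$) and independent of $t$ (as each $\K_{\la_0,t}$ is Lagrangian in $\fH^4$ by \cref{lemma:_Cauchy_data_plane_is_Lagrangian}). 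The standard smooth dependence of kernels of constant-dimensional $C^k$ Fredholm families (cf. \cite[Lemma 3.8]{CJLS14}, used in the same way in \cite{LS20first}) then produces a $C^1$ frame $\mathbf{w}^{(1)}_t,\dots,\mathbf{w}^{(n)}_t$ of $\ker L_t$ in $\cH_+^2$; its image $\{\T_t\mathbf{w}^{(j)}_t\}$ is a $C^1$ frame of $\K_{\la_0,t}$, so the coordinates of $\mathbf{a}_t$ in this frame solve a $C^1$-invertible linear system and are $C^1$, whence $\mathbf{w}_t=\sum_jc_j(t)\mathbf{w}^{(j)}_t\in C^1(\sI,\cH_+^2)$. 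I expect the delicate point here to be verifying the Fredholm/closed-range property of $L_t$ from \cref{hypo:basic_assumptions}(i) (via $(\cN_{t_0}+V_{t_0})^*\subset N^*+V_{t_0}^*$) so that the cited smooth-frame machinery applies verbatim.

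Finally, the $\la$-family is obtained identically with $t_0$ fixed. Now $L_\la:=(N^*+V_{t_0}^*-\la)\tau$ is affine in $\la$, hence a holomorphic Fredholm family with constant kernel dimension $2\dim\fH$ near $\la_0$ (again by \cref{hypo:basic_assumptions}(iv) and \cref{lemma:_Cauchy_data_plane_is_Lagrangian}), so $\ker L_\la$, $\K_{\la,t_0}$ and — with $\cF_{t_0}$ frozen — also $\Upsilon_{\la,t_0}$, $\Pi_{\la,t_0}$ and $\sM_{\la,t_0}$ depend real-analytically on $\la$; defining $\mathbf{w}_\la$ as the $\T_{t_0}$-preimage of the $\K_{\la,t_0}$-component of $\mathbf{q}_0+\sM_{t_0,\la}\mathbf{q}_0$ yields $\mathbf{w}_\la\in C^\infty(\sI,\cH_+^2)$ with $\mathbf{w}_{\la_0}=\mathbf{u}_{t_0}$. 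The precise form of the last identity in \eqref{w_la_family} — that the $\cF_{t_0}$-slot of $\mathbf{q}_0+\sM_{t_0,\la}\mathbf{q}_0$ is exactly $\T_{t_0}\mathbf{u}_{t_0}$ for all $\la$ — follows because $\{0\}\times\cF_{t_0}\subset\Upsilon_{\la_0,t_0}\cap\Upsilon_{\la,t_0}$ forces $\{0\}\times\cF_{t_0}\subset\ker\sM_{\la,t_0}$, and then the $\cF_{t_0}$-component of $\sM_{\la,t_0}$ applied to the $\K_{\la_0,t_0}$-part of $\mathbf{q}_0$ lies simultaneously in $\cF_{t_0}^\perp$ (as $\ran\sM_{\la,t_0}\subset(\Upsilon_{\la_0,t_0})^\perp$) and in $\cF_{t_0}$ (as $\mathbf{q}_0+\sM_{\la,t_0}\mathbf{q}_0\in\Upsilon_{\la,t_0}$), hence vanishes.
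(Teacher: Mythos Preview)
Your argument is correct and follows essentially the same strategy as the paper: decompose $\mathbf{q}_0+\sM_{\la_0,t}\mathbf{q}_0$ along $\K_{\la_0,t}\oplus\cF_t$, pull the first component back through the bijection $\T_t|_{\ker L_t}$, and then argue regularity. The only genuine difference is in how you establish $\mathbf{w}_t\in C^1(\sI,\cH_+^2)$: the paper uses Kato-type transformation operators $U_t$ in $\cH_+^2$ that split the orthogonal projections onto $\ker L_t$ and $\ker L_{t_0}$, writing $\mathbf{w}_t=U_t\circ(\T_t\circ U_t|_{\ker L_{t_0}})^{-1}h_t$ as a composition of $C^1$ maps, whereas you extract a $C^1$ frame of $\ker L_t$ from the constant-rank Fredholm family and solve for coefficients. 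Both are standard and essentially equivalent; your frame route is perhaps more elementary, while the transformation-operator route sidesteps the need to verify the Fredholm property of $L_t$ directly (note that this follows from the \emph{finite deficiency} assumption rather than from \cref{hypo:basic_assumptions}(i), since $A^*-z:\cH_+\to\cH$ is surjective with finite-dimensional kernel for $z$ in the resolvent set of any self-adjoint extension). Your explicit verification that the $\cF_{t_0}$-component of $\mathbf{q}_0+\sM_{t_0,\la}\mathbf{q}_0$ is frozen at $\T_{t_0}\mathbf{u}_{t_0}$ is a nice addition that the paper leaves implicit.
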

\begin{proof}
	We begin with the proof of \eqref{w_t_g_t_families} and the statements preceding those equations. The proof is almost identical to that of \cite[Lemma 4.19]{LS20first}, but we give it here for completeness. Let us momentarily denote $K_t \coloneqq \ker\left ( (N^*+V_t - \la)\tau\right )$, so that $\K_{\la,t} = \T_t K_t$. We also denote $q \coloneqq \T_{t_0} \mathbf{u}$, $\mathbf{q} \coloneqq (q,q)$, and let $\bbP_t$ be the orthogonal projections onto $\K_{\la,t}$. Then $\bbP_t \in C^1(\cI,\cB(\fH^4))$ for some open interval $\sI \subseteq \cI $ centred at $t_0$ (see, for example, \cite[Theorem 3.9]{BBvnkF98}).
	
	We now consider the projections in $\fH^4 \times \fH^4$ given by
	\begin{equation}
		\widehat{\bbP}_t \coloneqq \begin{bmatrix}
			\bbP_t & 0 \\ 0 & 0 
		\end{bmatrix}, \qquad \widehat{\Q}_t \coloneqq \begin{bmatrix}
			0 & 0 \\ 0 & P_t \oplus Q_t
		\end{bmatrix},
	\end{equation}
	so that $\widehat{\bbP}_t + \widehat{\Q}_t = \Pi_{\la,t}$ where $\ran(\Pi_{\la,t}) = \Upsilon_{\la,t} = \K_{\la,t} \oplus \cF_t$. Using the definition of $\Upsilon_{\la,t}$ and $\sM_{\la,t}$, see \eqref{cKcFcD} and \eqref{sM_defn}, we define 
	\begin{equation}
		h_t \in \ran \bbP_t \subset \fH^4, \quad g_t \in \ran(P_t\oplus Q_t) \subset \fH^4 
	\end{equation}
	such that 
	\begin{equation}
		(h_t,0)^\top = \widehat{\bbP}_t (\mathbf{q} + \sM_{\la,t} \mathbf{q}), \qquad (0,g_t)^\top  = \widehat{\Q}(\mathbf{q}+ \sM_{\la,t} \mathbf{q}),
	\end{equation}
	and so $h_{t_0} = g_{t_0} = q$. Since $t\mapsto \sM_{\la,t}, t\mapsto \bbP_t$ and $t\mapsto P_t\oplus Q_t$ are $C^1$, it follows that $t\mapsto h_t$ and $t\mapsto g_t$ are $C^1$. Employing \cref{hypo:basic_assumptions} and that $\ker(\T_t) = \dom(N)$ (see \eqref{kerT_domN}), it follows that the restriction
	\begin{equation}\label{T_t_bijection}
		\T_t |_{K_t} : K_t \to \ran \bbP_t \subset \fH^4
	\end{equation}
	is a bijection. Hence, there is a unique vector $\mathbf{w}_t\in K_t$ such that $\T_t\mathbf{w}_t = h_t$. Thus equations \eqref{w_t_g_t_families} hold for this choice of $\mathbf{w}_t$ and $g_t$.
	
	It remains to show that the mapping $t\mapsto \mathbf{w}_t$ is in $C^1(\sI,\cH_+^2)$; we will exploit the bijectivity of the mapping \eqref{T_t_bijection}, although some care is needed to handle the $t$-dependent domain of that mapping. Let $U_t$ denote the $C^1$ family of boundedly invertible transformation operators in $\cH_+^2$ that split the projections $\cP_{K_t}$ onto $K_t$ and $\cP_{K_{t_0}}$ onto $K_{t_0}$ such that $U_t\cP_{K_{t_0}} = \cP_{K_t}U_t$ and $U_t : K_{t_0} \to K_{t}$ are bijections for $t$ near $t_0$, see \cite[Remark 2.4]{LSS18}, \cite[Remark 3.5]{CJLS14}, \cite[Section IV.1]{Dalecki_Krein}, \cite[Remark 6.11]{F04}. Introducing $\mathbf{v}_t \in K_{t_0}$ by $\mathbf{v}_t = U_t^{-1} \mathbf{w}_t$ so that $\T_t\mathbf{w}_t = h_t$ yields $(\T_t \circ U_t) \mathbf{v}_t = h_t$. The map $\T_t \circ U_t|_{K_{t_0}} : K_{t_0} \to \ran \bbP_t$ is a bijection and $t\mapsto \T_t \circ U_t |_{K_{t_0}}$ is in $C^1(\mathcal{I}, \cB(K_{t_0}, \fH^4))$ by the assumptions in the lemma. Since $\mathbf{w}_t = U_t \circ(\T_t \circ U_t)^{-1} h_t$, the function $t\mapsto \mathbf{w}_t$ is $C^1$ because each of the terms in the composition is $C^1$.
	
	The proof above can now be applied to prove the existence and regularity of the family $\la \mapsto \mathbf{w}_\la $ satisfying \eqref{w_la_family}. Indeed, setting $\tilde V^*_\la \coloneqq V^*_{t_0} - \la$, in this situation only the bounded perturbation $\tilde V^*_\la$ depends on $\la$, while the trace operator $\T_{t_0}$ and Lagrangian plane $\ran P_{t_0}\times \ran Q_{t_0}$ are $\la$-independent. The mapping $\la\mapsto \tilde V_\la^*$ is $C^\infty$, and the proof now follows from the arguments used to prove \eqref{w_t_g_t_families}. 
\end{proof}

We are ready to compute crossing forms. 
\begin{lemma}\label{lemma:cross_forms}
	Let $(\la_0,t_0)$ be a simple conjugate point, let $\mathbf{u}_{t_0}\in\ker(\cN_{t_0} + V_{t_0} - \la_0)$ and let $\mathbf{q} \coloneqq (\T \mathbf{u}_{t_0}, \T \mathbf{u}_{t_0}) \in \Upsilon_{\la,t} \cap \mathfrak{D}$. The $t$-crossing form introduced in \eqref{define_t_crossing_form} for the Lagrangian path $t\mapsto \Upsilon_{\la_0,t}$ with respect to the reference plane $\mathfrak{D}$ (c.f. \eqref{cKcFcD}) is given by
	\begin{align}\label{t_crossing_form}
		\mathfrak{m}_{t_0}(\mathbf{q},\mathbf{q}) &= \langle \tau \dot{V}_{t_0} \mathbf{u}_{t_0}, \mathbf{u}_{t_0}\rangle_{\cH^2} + \Om \left (( \dot P_{t_0} \oplus \dot Q_{t_0} )\T_{t_0}\mathbf{u}_{t_0}, \T_{t_0}\mathbf{u}_{t_0}\right ) + \Om \left (\T_{t_0}\mathbf{u}_{t_0}, \dot{\T}_{t_0}\mathbf{u}_{t_0}\right )
	\end{align}
	while the $\la$ crossing form introduced in \eqref{define_la_crossing_form} for the Lagrangian path $\la\mapsto \Upsilon_{\la,t_0}$ with respect to the reference plane $\mathfrak{D}$ is given by 
	\begin{align}\label{la_crossing_form}
		\mathfrak{m}_{\la_0}(\mathbf{q},\mathbf{q}) &= -\langle \tau \mathbf{u}_{t_0}, \mathbf{u}_{t_0}\rangle_{\cH^2}.
	\end{align}
\end{lemma}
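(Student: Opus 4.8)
The plan is to compute both crossing forms directly from their definitions \eqref{define_t_crossing_form}, \eqref{define_la_crossing_form}, using the $C^1$ families furnished by \cref{lemma:t_la_family}. Fix the simple conjugate point $(\la_0,t_0)$ and eigenfunction $\mathbf{u}_{t_0}$, write $q\coloneqq\T_{t_0}\mathbf{u}_{t_0}$ and $\mathbf{q}=(q,q)^\top\in\Upsilon_{\la_0,t_0}\cap\fD$; note $q\in\cF_{t_0}$ since $\mathbf{u}_{t_0}\in\dom(\cN_{t_0})$ and $\T_{t_0}(\dom\cN_{t_0})=\cF_{t_0}$ by \eqref{TtdomNt}. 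By \eqref{412c}, $\mathbf{q}+\sM_{\la_0,t}\mathbf{q}=(\T_t\mathbf{w}_t,g_t)^\top$, where $t\mapsto\mathbf{w}_t\in C^1(\sI,\cH_+^2)$ with $\mathbf{w}_{t_0}=\mathbf{u}_{t_0}$ and $\mathbf{w}_t\in\ker((N^*+V_t^*-\la_0)\tau)$, and $t\mapsto g_t\in C^1(\sI,\fH^4)$ with $g_t\in\cF_t$, $g_{t_0}=q$. Using that $\tilde\Om$ is antisymmetric one has $\tilde\Om(\mathbf{q},\sM_{\la_0,t}\mathbf{q})=\tilde\Om(\mathbf{q},(\T_t\mathbf{w}_t,g_t)^\top)=\Om(q,\T_t\mathbf{w}_t)-\Om(q,g_t)$, and differentiating at $t_0$ with $\tfrac{d}{dt}\big|_{t_0}\T_t\mathbf{w}_t=\dot\T_{t_0}\mathbf{u}_{t_0}+\T_{t_0}\dot{\mathbf{w}}_{t_0}$ gives
\[
\mathfrak{m}_{t_0}(\mathbf{q},\mathbf{q})=\Om(\T_{t_0}\mathbf{u}_{t_0},\dot\T_{t_0}\mathbf{u}_{t_0})+\Om(\T_{t_0}\mathbf{u}_{t_0},\T_{t_0}\dot{\mathbf{w}}_{t_0})-\Om(q,\dot g_{t_0}).
\]

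Next I would evaluate the last two summands. For $\Om(q,\dot g_{t_0})$, differentiating $g_t=(P_t\oplus Q_t)g_t$ at $t_0$ yields $\dot g_{t_0}=(\dot P_{t_0}\oplus\dot Q_{t_0})q+(P_{t_0}\oplus Q_{t_0})\dot g_{t_0}$; since $q\in\cF_{t_0}$ and $\cF_{t_0}=\ran P_{t_0}\times\ran Q_{t_0}$ is Lagrangian for $\Om$ (a product of Lagrangian planes with respect to $J\oplus(-J)$), the $\cF_{t_0}$-summand is killed by $\Om(q,\cdot)$, so $-\Om(q,\dot g_{t_0})=-\Om(q,(\dot P_{t_0}\oplus\dot Q_{t_0})q)=\Om((\dot P_{t_0}\oplus\dot Q_{t_0})\T_{t_0}\mathbf{u}_{t_0},\T_{t_0}\mathbf{u}_{t_0})$ by antisymmetry. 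For $\Om(\T_{t_0}\mathbf{u}_{t_0},\T_{t_0}\dot{\mathbf{w}}_{t_0})$, I would differentiate the kernel relation $(N^*+V_t^*-\la_0)\tau\mathbf{w}_t=0$ at $t_0$ — legitimate because $t\mapsto\mathbf{w}_t$ is $C^1$ into $\cH_+^2$, $N^*\tau\in\cB(\cH_+^2,\cH^2)$, and $t\mapsto V_t$ is $C^1$ — obtaining $(N^*-\la_0)\tau\dot{\mathbf{w}}_{t_0}=-V_{t_0}^*\tau\dot{\mathbf{w}}_{t_0}-\dot V_{t_0}^*\tau\mathbf{u}_{t_0}$, while $(N^*-\la_0)\tau\mathbf{u}_{t_0}=-V_{t_0}^*\tau\mathbf{u}_{t_0}$. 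Substituting both into the modified Green's identity \eqref{modified_greens_real_la} (trace $\T_{t_0}$, parameter $\la_0$, arguments $\mathbf{u}_{t_0}$ and $\dot{\mathbf{w}}_{t_0}$) and using $V_{t_0}^*\tau=\tau V_{t_0}$ from \eqref{V*tau} together with $\tau^*=\tau$, the two $V_{t_0}$-terms cancel, leaving $\Om(\T_{t_0}\mathbf{u}_{t_0},\T_{t_0}\dot{\mathbf{w}}_{t_0})=\langle\mathbf{u}_{t_0},\dot V_{t_0}^*\tau\mathbf{u}_{t_0}\rangle_{\cH^2}=\langle\tau\dot V_{t_0}\mathbf{u}_{t_0},\mathbf{u}_{t_0}\rangle_{\cH^2}$. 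Assembling the three pieces then gives \eqref{t_crossing_form}.

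For the $\la$-crossing form the argument is structurally the same, using the family $\la\mapsto\mathbf{w}_\la$ from \eqref{w_la_family}, with the simplification that $\cF_{t_0}$ does not depend on $\la$, so $\sM_{\la,t_0}\mathbf{q}=(\T_{t_0}\mathbf{w}_\la-q,\,0)^\top$ and hence $\mathfrak{m}_{\la_0}(\mathbf{q},\mathbf{q})=\Om(\T_{t_0}\mathbf{u}_{t_0},\T_{t_0}\mathbf{w}'_{\la_0})$, where $\mathbf{w}'_{\la_0}\coloneqq\tfrac{d}{d\la}\big|_{\la_0}\mathbf{w}_\la$. Differentiating $(N^*+V_{t_0}^*-\la)\tau\mathbf{w}_\la=0$ at $\la_0$ gives $(N^*-\la_0)\tau\mathbf{w}'_{\la_0}=\tau\mathbf{u}_{t_0}-V_{t_0}^*\tau\mathbf{w}'_{\la_0}$; plugging this together with $(N^*-\la_0)\tau\mathbf{u}_{t_0}=-V_{t_0}^*\tau\mathbf{u}_{t_0}$ into \eqref{modified_greens_real_la} (the $V_{t_0}$-terms cancelling exactly as above) yields $\Om(\T_{t_0}\mathbf{u}_{t_0},\T_{t_0}\mathbf{w}'_{\la_0})=-\langle\mathbf{u}_{t_0},\tau\mathbf{u}_{t_0}\rangle_{\cH^2}=-\langle\tau\mathbf{u}_{t_0},\mathbf{u}_{t_0}\rangle_{\cH^2}$, which is \eqref{la_crossing_form}.

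The steps requiring the most care are (i) justifying the termwise differentiation of the kernel relations in the right topology, which relies on the $C^1$-regularity of $t\mapsto\mathbf{w}_t$ and $\la\mapsto\mathbf{w}_\la$ from \cref{lemma:t_la_family} and the boundedness of $N^*\tau:\cH_+^2\to\cH^2$; and (ii) the cancellation of the potential terms inside the modified Green's identity, which is precisely the point where the canonically symplectic structure — encoded in $V_t^*=\tau V_t\tau$ and $\tau=\tau^*$ — enters. I expect (ii), though short, to be the conceptual heart of the argument; the remainder is bookkeeping with $\tilde\Om=\Om\oplus(-\Om)$, the vanishing of $\Om(q,\cdot)$ on the Lagrangian plane $\cF_{t_0}$, and sign tracking via antisymmetry of $\Om$.
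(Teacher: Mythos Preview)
Your proof is correct and follows essentially the same route as the paper: both compute the crossing forms from the $C^1$ families $\mathbf{w}_t$, $g_t$, $\mathbf{w}_\la$ of \cref{lemma:t_la_family}, differentiate the kernel relations, and combine the modified Green's identity \eqref{modified_greens_real_la} with the Lagrangian property of $\cF_{t_0}$ for the $g_t$-term. The only cosmetic difference is that you split off the $(N^*-\la_0)\tau$ part and cancel the $V_{t_0}$-contributions explicitly via $V_{t_0}^*\tau=\tau V_{t_0}$, whereas the paper applies Green's identity directly to $(N^*+V_{t_0}^*-\la_0)\tau$, tacitly using that $\tau V_{t_0}$ is self-adjoint; the content is identical.
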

\begin{proof}
	We have
	\begin{align}
		(N^*+V_t^* -\la_0 )\tau \mathbf{w}_t = 0,
	\end{align}
	and, differentiating this equation with respect to $t$ at $t_0$ and applying $\langle \cdot, \mathbf{w}_{t_0}\rangle_{\cH^2}$, we find that
	\begin{align}
		\langle (N^*+V^*_{t_0} -\la_0 )\tau \dot{\mathbf{w}}_{t_0},\mathbf{w}_{t_0} \rangle_{\cH^2} + \langle \dot{V}^*_{t_0}\tau \mathbf{w}_{t_0}, \mathbf{w}_{t_0} \rangle_{\cH^2} = 0.
	\end{align}
	Using Green's identity \eqref{modified_greens_real_la} with $\mathbf{u} = \dot{\mathbf{w}}_{t_0}$ and $\mathbf{v} = {\mathbf{w}}_{t_0}$, we find that
	\begin{equation}
		\left \langle (N^*+V^*_{t_0} - \la_0) \tau \dot{\mathbf{w}}_{t_0}, {\mathbf{w}}_{t_0}\right \rangle_{\cH^2} - \left \langle \dot{\mathbf{w}}_{t_0}, (N^* +V^*_{t_0} - \la_0) \tau{\mathbf{w}}_{t_0}\right \rangle_{\cH^2} = \Om (\T_{t_0}\dot{\mathbf{w}}_{t_0}, \T_{t_0} {\mathbf{w}}_{t_0}).
	\end{equation}
	Combining the previous two equations and noting that $\mathbf{w}_{t_0} \in \ker(N^*+ V_{t_0}^* - \la_0)\tau$, we obtain
	\begin{equation}\label{V_and_omega}
		\langle \dot{V}^*_{t_0}\tau \mathbf{w}_{t_0}, \mathbf{w}_{t_0} \rangle_{\cH^2} - \Om (\T_{t_0} {\mathbf{w}}_{t_0},\T_{t_0}\dot{\mathbf{w}}_{t_0}) =0.
	\end{equation} 
	Now by \eqref{define_t_crossing_form} and \eqref{412c} we have
	\begin{align}
		\mathfrak{m}_{t_0}(\mathbf{q},\mathbf{q}) &= \de{}{t}\Big|_{t=t_0}  \, \tilde \Om ((\T_{t_0} \mathbf{w}_{t_0}, g_{t_0})^{\top},(\T_{t} \mathbf{w}_t, g_t)^{\top}), \nonumber\\
		&=  \Om \left (\T \mathbf{w}_{t_0}, \T \dot{\mathbf{w}}_{t_0}\right ) + \Om \left (\T_{t_0} \mathbf{w}_{t_0}, \dot{\T}_{t_0} \mathbf{w}_{t_0}\right ) - \Om \left ( g_{t_0}, \dot g_{t_0}\right ). \label{cross_form_t}
	\end{align}
	Since $g_t \in \cF_t$ and hence $ g_t = (P_t\oplus Q_t ) g_t $, we have 
	\begin{equation}
		\dot g_{t_0} =( \dot P_{t_0} \oplus \dot Q_{t_0} )g_{t_0} + ( P_{t_0} \oplus Q_{t_0})\dot g_{t_0}.
	\end{equation}
	Using this, along with the fact that $\cF_t = \ran(P_t) \times \ran(Q_t)$ is a Lagrangian subspace of $\fH^4\oplus \fH^4$, and $\T_{t_0} \mathbf{u}_{t_0} = \T_{t_0}\mathbf{w}_{t_0} \in \ran(P_{t_0}) \times \ran(Q_{t_0})$, we find that 
	\begin{align}
		\Om \left (\T_{t_0}\mathbf{w}_{t_0}, \dot g_{t_0}\right ) &=  \Om \left ( \T_{t_0}\mathbf{w}_{t_0}, ( \dot P_{t_0} \oplus \dot Q_{t_0} )\T_{t_0}\mathbf{w}_{t_0}+ ( P_{t_0} \oplus Q_{t_0})\dot g_{t_0}\right ), \nonumber\\
		&= \Om \left (\T_{t_0}\mathbf{w}_{t_0}, ( \dot P_{t_0} \oplus \dot Q_{t_0} )\T_{t_0}\mathbf{w}_{t_0}\right ). \label{omega_g_Tw}
	\end{align}
	Using \eqref{omega_g_Tw} and \eqref{V_and_omega} in \eqref{cross_form_t}, and the fact that $\dot{V}^*_{t_0} \tau = \tau \dot{V}_{t_0}$, yields \eqref{t_crossing_form}.
	
	For the $\la$-crossing form calculation, we now consider the family of vectors $\la \mapsto \mathbf{w}_\la$ given by \eqref{w_la_family}. By \eqref{define_la_crossing_form} \footnote{here we note that the second component of the mapping $\lambda\mapsto (\T_{t_0} \mathbf{w}_\la , \T_{t_0} u_{t_0})^{\top}$ is $\lambda$-independent, hence, the computation reduces the first component $\Omega$ of the symplectic form $\tilde\Omega=\Omega\oplus(-\Omega)$} we have
	\begin{align}
		\mathfrak{m}_{\la_0}(\mathbf{q},\mathbf{q}) &= \de{}{\la}\Big|_{\la=\la_0}  \, \Om (\T_{t_0} \mathbf{w}_{\la_0}, \T_{t_0} \mathbf{w}_\la) =  \Om \left (\T_{t_0} \mathbf{w}_{\la_0}, \T_{t_0} \dot{\mathbf{w}}_{\la_0}\right ).
	\end{align}
	As prior, for the equation 
	\begin{equation}\label{diff_eqn_la_w_family}
		(N^*+V^*_{t_0} -\la )\tau \mathbf{w}_\la = 0,
	\end{equation}
	differentiating at $\la_0$ and applying $\langle \cdot, \mathbf{w}_{\la_0}\rangle_{\cH^2}$ yields
	\begin{align}
		\langle (N^*+V^*_{t_0} -\la_0 )\tau \dot{\mathbf{w}}_{\la_0},\mathbf{w}_{\la_0} \rangle_{\cH^2} - \langle \tau \mathbf{w}_{\la_0}, \mathbf{w}_{\la_0} \rangle_{\cH^2} = 0.
	\end{align}
	Using Green's identity \eqref{modified_greens_real_la} with $\mathbf{u} = \dot{\mathbf{w}}_{\la_0}$ and $\mathbf{v} = {\mathbf{w}}_{\la_0}$, we find that
	\begin{equation}
		\left \langle (N^*+V^*_{t_0} - \la_0) \tau \dot{\mathbf{w}}_{\la_0}, {\mathbf{w}}_{\la_0}\right \rangle_{\cH^2} - \left \langle \dot{\mathbf{w}}_{\la_0}, (N^* +V^*_{t_0} - \la_0) \tau{\mathbf{w}}_{\la_0}\right \rangle_{\cH^2} = \Om (\T_{t_0}\dot{\mathbf{w}}_{\la_0}, \T_{t_0} {\mathbf{w}}_{\la_0}).
	\end{equation}
	Combining the previous two equations and noting that $\mathbf{w}_{\la_0} \in \ker(N^*+ V_{t_0}^* - \la_0)\tau$, we obtain 
	\begin{equation}
		\Om (\T_{t_0} {\mathbf{w}}_{\la_0}, \T_{t_0}\dot{\mathbf{w}}_{\la_0}) = - \langle \tau \mathbf{w}_{t_0}, \mathbf{w}_{t_0} \rangle_{\cH^2}.
	\end{equation} 
	Substituting the previous equation into the $\la$-crossing form, we obtain \eqref{la_crossing_form}.

\end{proof}

	\subsection{Lyapunov-Schmidt reduction}
	
	In this subsection we prove the following proposition, which states that eigenvalues $\la\in\spec(\cN_{t}+V_{t})$, for $(\la,t)$ near $(\la_0,t_0)$, are determined by the zero set of the determinant of a symmetric $n\times n$ matrix $M(\la,t)$.
	
	\begin{prop}\label{prop:M}
		Assume $\dim \ker(\cN_{t_0} + V_{t_0} - \la_0)=g$ with basis $\{\bu_{t_0}^{(1)}, \dots, \bu_{t_0}^{(g)} \}$. There exists an $g\times g$ matrix $M(\lambda,t)$, defined near $(\lambda_0,t_0)$, such that $\lambda \in \spec(\cN_t+V_t)$ if and only if $\det M(\lambda,t) = 0$. This matrix satisfies $M(\lambda_0,t_0) = 0$, 
		\begin{align}
			\frac{\p M_{ji}}{\p t}(\lambda_0,t_0) &= \left<  \tau\dot{V}_{t_0} \bu_{t_0}^{(i)}, \bu_{t_0}^{(j)}\right> + \Om\left ( (\dot{P}_{t_0} \oplus \dot{Q}_{t_0} ) \T_{t_0} \bu^{(i)}_{t_0}, \T_{t_0} \bu^{(j)}_{t_0} \right ) + \Om\left ( \T_{t_0} \bu^{(i)}_{t_0}, \dot{\T}_{t_0} \bu^{(j)}_{t_0} \right ), \\
			\frac{\p M_{ji}}{\p \lambda}(\lambda_0,t_0) &= - \left< \tau\bu_{t_0}^{(i)}, \bu_{t_0}^{(j)}\right>.
		\end{align}
		\end{prop}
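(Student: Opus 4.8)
The plan is to run a Lyapunov--Schmidt reduction on the symplectic ``boundary'' side, reducing the condition $\la\in\spec(\cN_t+V_t)$ to the vanishing of the determinant of a small matrix, and then to identify the first derivatives of that matrix with the Maslov crossing forms of \cref{lemma:cross_forms}. I would work in a neighbourhood of $(\la_0,t_0)$; by \eqref{Lag_intersect_spectral_interp_iff}, $\la\in\spec(\cN_t+V_t)$ is equivalent to $\Upsilon_{\la,t}\cap\fD\neq\{0\}$. Since $(N^*+V_t^*-\la)\tau=(\tau N)^*+V_t^*\tau-\la\tau$ is a bounded, jointly $C^2$ (indeed $C^\infty$ in $\la$) perturbation of $(\tau N)^*$, the projection onto $\K_{\la,t}$ is jointly $C^2$ near $(\la_0,t_0)$ (cf.\ \cite[Theorem 3.9]{BBvnkF98}), and with the $C^2$ families $t\mapsto P_t,Q_t$ this makes the projection $\Pi_{\la,t}$ onto $\Upsilon_{\la,t}=\K_{\la,t}\oplus\cF_t$ jointly $C^2$; hence, exactly as in \eqref{sM_defn} but now in both variables, $\Upsilon_{\la,t}=\{\mathbf{p}+\sM_{\la,t}\mathbf{p}:\mathbf{p}\in\Upsilon_{\la_0,t_0}\}$ for a jointly $C^2$ family $\sM_{\la,t}$ with $\sM_{\la_0,t_0}=0$. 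Moreover \cref{hypo:basic_assumptions}(iv) together with $\ker\T_{t_0}=\dom(N)=\dom(\tau N)$ makes $\T_{t_0}$ injective on $\ker((N^*+V_{t_0}^*-\la_0)\tau)$, which upgrades \eqref{Lag_intersect_spectral_interp_iff} to the dimension count $\dim\big(\Upsilon_{\la_0,t_0}\cap\fD\big)=\dim\ker(\cN_{t_0}+V_{t_0}-\la_0)=g$, with basis $\mathbf{q}^{(i)}:=(\T_{t_0}\bu_{t_0}^{(i)},\T_{t_0}\bu_{t_0}^{(i)})$, $i=1,\dots,g$, of $W:=\Upsilon_{\la_0,t_0}\cap\fD$.

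I would then reduce exactly as in \cite{CJLS14,LS20first,CCLM23}: choosing a Lagrangian transversal to both $\fD$ and $\Upsilon_{\la_0,t_0}$ (these form a Fredholm pair), one writes $\Upsilon_{\la,t}$ near the crossing as the graph of a jointly $C^2$ family of symmetric operators and reduces the associated symmetric bilinear form by a Schur complement onto $W$. This yields a symmetric $g\times g$ matrix $M(\la,t)$, $C^2$ in $(\la,t)$, with $\det M(\la,t)=0$ if and only if $\Upsilon_{\la,t}\cap\fD\neq\{0\}$, i.e.\ if and only if $\la\in\spec(\cN_t+V_t)$. Because the off-diagonal ($W$ versus $W^\perp$) blocks of the graphing operator vanish at the crossing, one gets both $M(\la_0,t_0)=0$ and that the Schur correction is quadratically small there; consequently $\tfrac{\partial M_{ji}}{\partial t}(\la_0,t_0)$ and $\tfrac{\partial M_{ji}}{\partial\la}(\la_0,t_0)$ equal the values $\mathfrak{m}_{t_0}(\mathbf{q}^{(i)},\mathbf{q}^{(j)})$ and $\mathfrak{m}_{\la_0}(\mathbf{q}^{(i)},\mathbf{q}^{(j)})$ of the $t$- and $\la$-crossing forms \eqref{define_t_crossing_form}, \eqref{define_la_crossing_form} (using the standard independence of the crossing form of the chosen Lagrangian splitting, and that $DM(\la_0,t_0)$ is the compression of $D\sM_{\la_0,t_0}$ to $W$).

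It then remains to evaluate these crossing forms, which I would do by redoing the proof of \cref{lemma:cross_forms} with two indices. First extend \cref{lemma:t_la_family} to the $g$-dimensional crossing --- for each $i$ one obtains $C^1$ families $t\mapsto\mathbf{w}_t^{(i)}\in\ker((N^*+V_t^*-\la_0)\tau)$ and $g_t^{(i)}\in\cF_t$, and $\la\mapsto\mathbf{w}_\la^{(i)}\in\ker((N^*+V_{t_0}^*-\la)\tau)$, with $\mathbf{w}_{t_0}^{(i)}=\bu_{t_0}^{(i)}$, $g_{t_0}^{(i)}=\T_{t_0}\bu_{t_0}^{(i)}$, realising $\mathbf{q}^{(i)}+\sM_{\la,t}\mathbf{q}^{(i)}$ as in \eqref{412c}; this requires no new idea. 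Then differentiate $(N^*+V_t^*-\la_0)\tau\mathbf{w}_t^{(i)}=0$ at $t_0$, pair with $\mathbf{w}_{t_0}^{(j)}$ (rather than $\mathbf{w}_{t_0}^{(i)}$), and apply the modified Green's identity \eqref{modified_greens_real_la} with $\mathbf{u}=\dot{\mathbf{w}}_{t_0}^{(i)}$, $\mathbf{v}=\mathbf{w}_{t_0}^{(j)}$; combining these and using $\dot V_{t_0}^*\tau=\tau\dot V_{t_0}$, the Lagrangianity of $\cF_t$ (so the $(P_{t_0}\oplus Q_{t_0})\dot g_{t_0}^{(i)}$ contribution drops out as in \eqref{omega_g_Tw}), and $\Om(\T_{t_0}\bu_{t_0}^{(i)},\dot\T_{t_0}\bu_{t_0}^{(j)})\in\R$ (from differentiating $t\mapsto\Om(\T_t\bu_{t_0}^{(i)},\T_t\bu_{t_0}^{(j)})$), one arrives at
\[
\mathfrak{m}_{t_0}(\mathbf{q}^{(i)},\mathbf{q}^{(j)})=\langle\tau\dot V_{t_0}\bu_{t_0}^{(i)},\bu_{t_0}^{(j)}\rangle_{\cH^2}+\Om\big((\dot P_{t_0}\oplus\dot Q_{t_0})\T_{t_0}\bu_{t_0}^{(i)},\T_{t_0}\bu_{t_0}^{(j)}\big)+\Om\big(\T_{t_0}\bu_{t_0}^{(i)},\dot\T_{t_0}\bu_{t_0}^{(j)}\big),
\]
and, analogously (differentiating $(N^*+V_{t_0}^*-\la)\tau\mathbf{w}_\la^{(i)}=0$ at $\la_0$), $\mathfrak{m}_{\la_0}(\mathbf{q}^{(i)},\mathbf{q}^{(j)})=-\langle\tau\bu_{t_0}^{(i)},\bu_{t_0}^{(j)}\rangle_{\cH^2}$. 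Combined with the previous paragraph this gives the two stated derivative formulas.

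The hard part will be the reduction step --- constructing $M$ so that $\det M$ detects the Lagrangian intersection \emph{exactly}, while keeping $M$ symmetric and controlling both $M(\la_0,t_0)$ and $DM(\la_0,t_0)$. The Schur/Lyapunov--Schmidt argument accomplishes all of this at once, the decisive point being that the off-crossing blocks of the graphing operator vanish precisely at $(\la_0,t_0)$, which simultaneously forces $M(\la_0,t_0)=0$ and decouples the essential $g\times g$ block to first order, so that $DM(\la_0,t_0)$ is literally the compression of $D\sM_{\la_0,t_0}$ to $W$, i.e.\ the crossing form. By contrast the crossing-form computation is routine once \cref{lemma:t_la_family} and \cref{lemma:cross_forms} are granted in their $g$-vector versions, which only amounts to carrying the basis $\{\bu_{t_0}^{(i)}\}$ through the same arguments and polarising.
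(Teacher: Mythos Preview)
Your approach is correct but genuinely different from the paper's. The paper performs the Lyapunov--Schmidt reduction \emph{directly on the operator equation} in $\cH^2$: it splits $(\cN_t+V_t-\la)\mathbf u=0$ using the orthogonal projection $\Pi$ onto $\ker(\cN_{t_0}^*+V_{t_0}^*-\la_0)=\spn\{\tau\bu_{t_0}^{(j)}\}$, solves the infinite-dimensional piece by inverting $T(\la,t)=(I-\Pi)(\cN_t+V_t-\la)|_{X_t}$, and defines $M_{ji}(\la,t)=\big\langle(\cN_t+V_t-\la)(I+W(\la,t))\bu_{t_0}^{(i)},\tau\bu_{t_0}^{(j)}\big\rangle$ with $W(\la,t)=-T^{-1}(\la,t)(I-\Pi)(\cN_t+V_t-\la)$. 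The derivative formulas are then obtained by a direct computation: one rewrites $M_{ji}$ via the modified Green identity \eqref{modified_greensOm} and differentiates, the key cancellation being $(\tau(N+V_{t_0}-\la_0))^*\bu_{t_0}^{(j)}=0$ (no polarised version of \cref{lemma:cross_forms} is invoked). By contrast, you reduce on the \emph{boundary} side: Schur-complement the graphing operator of $\Upsilon_{\la,t}$ over $\fD$ onto $W=\Upsilon_{\la_0,t_0}\cap\fD$, and then identify $DM(\la_0,t_0)$ with the bilinearised crossing forms via the standard splitting-independence argument and a two-index version of \cref{lemma:cross_forms}. Your route is more geometric and makes the link between $DM$ and the Maslov crossing forms transparent from the outset (rather than observing it a posteriori as the paper does in \eqref{M_derivs_simple_case}); the paper's route is more explicit and self-contained, avoiding the auxiliary Lagrangian transversal and the appeal to splitting-independence, and giving a concrete $M$ in terms of the inverse $T^{-1}$. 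Both yield the same matrix up to congruence and, in particular, the same first derivatives.
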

	Our goal is to construct a matrix $M(\la,t)$, the zero set of the determinant of which locally coincides with the real spectrum of $\cN_t+V_t$ (i.e. for $(\la,t)$ near $(\la_0,t_0)$). We proceed with Lyapunov-Schmidt reduction.
	\begin{proof}
		The first step is to split the eigenvalue equation $(\cN_t +V_t - \lambda)\bu = 0$ into two parts, one of which can always be solved uniquely. Let $\Pi$ denote the $\cH$-orthogonal projection onto $\ker (\cN^*_{t_0} + V^*_{t_0} - \lambda_0)$, so that $I-\Pi$ is the projection onto $\ker (\cN^*_{t_0}+ V^*_{t_0}  - \lambda_0)^\perp = \ran (\cN_{t_0} + V_{t_0}-\lambda_0)$.  
		
		It follows that $\lambda$ is an eigenvalue of $\cN_t$ if and only if there exists a nonzero $\bu\in\dom \cN_t$ such that both
		\begin{equation}
			\label{Gsplit1}
			\Pi(\cN_t + V_t - \lambda)\bu  = 0
		\end{equation}
		and
		\begin{equation}
			\label{Gsplit2}
			(I-\Pi)(\cN_t  + V_t - \lambda)\bu  = 0
		\end{equation}
		hold.

		We first consider \eqref{Gsplit2}. Defining $X_t = \ker (\cN_{t_0} + V_{t_0} - \lambda_0) ^\perp \cap \dom \cN_t$, we have that any $\bu \in \dom \cN_t$ can be written uniquely as 
		\begin{equation}\label{decomp1}
			\bu = \sum_{i=1}^g \al_i \bu_{t_0}^{(i)} +  \tilde{\bu},
		\end{equation}
		where $\al_i \in \R$ and $\tilde{\bu} \in X_t$. This means \eqref{Gsplit2} holds if and only if there exists a vector $\pmb{\alpha} = (\al_1, \dots ,\al_g) \in \R^g$ and a function $\tilde{\bu} \in X_t$ such that
		\begin{equation}
			\label{Gsplit3}
			(I-\Pi)(\cN_t + V_t - \lambda) \left (\sum_{i=1}^g \al_i \bu_{t_0}^{(i)} + \tilde{\bu}\right ) = 0.
		\end{equation}
		We claim that for each $(\pmb{\al},\lambda,t)$ there exists a unique $\tilde{\bu} = \tilde{\bu}(\pmb{\al},\lambda,t) \in X_t$ satisfying \eqref{Gsplit3}. Writing this equation out explicitly, it is
		\[
		(I-\Pi)(\cN_t + V_t - \lambda)\tilde{\bu}(\pmb{\al},\lambda,t) = - (I-\Pi)(\cN_t + V_t - \lambda)\sum_{i=1}^g \al_i \bu_{t_0}^{(i)}.
		\]
		We define
		\begin{equation}\label{eq:Tdefn}
			T(\lambda,t) \colon X_t \to \ran (\cN_{t_0} + V_{t_0}- \lambda_0) , \qquad T(\lambda,t) = (I-\Pi)\big(\cN_t + V_{t} - \lambda\big)\Big|_{X_t},
		\end{equation}
		and observe that $T(\lambda_0,t_0)$ is invertible, hence, due to continuity of resolvents as mappings from $\cH^2$ to $\cH^2_+$, see  Proposition \ref{thm:resolvent_regularity} (2), $T(\lambda,t)$ is also invertible for nearby $(\lambda,t)$. 
		
		In a slight abuse of notation, we denote $X_t^\perp = \ker (\cN_{t_0} + V_{t_0} - \lambda_0)\cap \dom \cN_t$. Then, defining
		\begin{equation}\label{eq:matrixW_lam_t}
			W(\lambda,t) : X_t^\perp  \to X_t, \qquad W(\lambda,t)  = -T^{-1}(\lambda,t) (I-\Pi)\big(\cN_t + V_t - \lambda\big)\Big|_{X_t^\perp},
		\end{equation}
		the unique solution to \eqref{Gsplit3} is thus
		\begin{equation}
			\label{uhat}
			\tilde{\bu}(\pmb{\al},\lambda,t) = W(\lambda,t) \sum_{i=1}^g \al_i \bu_{t_0}^{(i)}.
		\end{equation}
		So far we have shown that the equation $(I-\Pi)(\cN_{t} + V_t - \lambda)\bu  = 0$ is satisfied if and only if $\bu$ has the form
		\begin{equation}
			\label{uform}
			\bu = \sum_{i=1}^g \al_i \bu_{t_0}^{(i)} + W(\lambda,t)\sum_{i=1}^g \al_i \bu_{t_0}^{(i)} = \big(I + W(\lambda,t)\big)\sum_{i=1}^g \al_i \bu_{t_0}^{(i)}
		\end{equation}
		for some $\pmb{\al}\in \R^g$. We conclude that there exists $\bu$ for which $(\cN_t + V_t - \lambda)\bu  = 0$ holds if and only if
		\begin{equation}
			\label{PG}
			\Pi(\cN_t + V_t - \lambda) \big(I + W(\lambda,t)\big)\left( \sum_{i=1}^g \al_i \bu_{t_0}^{(i)}\right) = 0
		\end{equation}
		for some $\pmb{\al} \in \R^g$. Moreover, $\bu$ is nonzero if and only if $\pmb{\al}$ is nonzero. Finally, we observe that $\ker (\cN^*_{t_0} + V^*_{t_0} - \lambda_0)$ is spanned by $\{\tau\bu_{t_0}^{(1)}, \tau\bu_{t_0}^{(2)}, \dots, \tau\bu_{t_0}^{(g)}\}$, and so \eqref{PG} is equivalent to
		\begin{equation}
			\label{systemeqs}
			\left<(\cN_t + V_t - \lambda)\big(I + W(\lambda,t)\big) \left(\sum_{i=1}^g \al_i \bu_{t_0}^{(i)}\right), \tau\bu_{t_0}^{(j)}\right> = 0, \qquad j=1,\dots, g. 
		\end{equation}
		Defining the $g\times g$ matrix $M(\la,t)$ by 
		\begin{equation}\label{eq:matrixM}
			M_{ji}(\la,t) = \left<(\cN_t + V_t - \lambda) \big(I + W(\lambda,t)\big)\bu^{(i)}_{t_0}, \tau\bu_{t_0}^{(j)}\right>, \quad i,j=1, \dots, g,
		\end{equation}
		the system of $g$ equations \eqref{systemeqs} may be written as $M(\la,t) \pmb{\al} = 0$, which is satisfied for a nonzero vector $\pmb{\al}$ if and only if $\det M(\la,t)=0$. This completes the first part of the proof.

		It follows that $M(\lambda_0,t_0) = 0$, because $W(\lambda_0,t_0) \bu^{(i)}_{t_0}=0$. For the $t$ derivative, we first use the modified Green's identity \eqref{modified_greensOm} and the fact that $\tau(\cN_t + V_t - \lambda)$ is a symmetric operator to write 
		\begin{align}
			M_{ji}(\la,t) &= \left<\tau(\cN_t + V_t - \lambda) \big(I + W(\lambda,t)\big)\bu^{(i)}_{t_0}, \bu_{t_0}^{(j)}\right>, \nonumber\\
			&= \left<(\tau(N + V_{t_0} - \lambda))^* \big(I + W(\lambda,t)\big)\bu^{(i)}_{t_0}, \bu_{t_0}^{(j)}\right> + \left<(\tau V_t - \tau V_{t_0})^* \big(I + W(\lambda,t)\big)\bu^{(i)}_{t_0}, \bu_{t_0}^{(j)}\right> , \nonumber\\
			&=  \left<\big(I + W(\lambda,t)\big)\bu^{(i)}_{t_0}, (\tau(N + V_{t_0} - \lambda))^* \bu_{t_0}^{(j)}\right> + \Om\left (\T_{t}\big(I + W(\lambda,t)\big)\bu^{(i)}_{t_0}, \T_{t} \bu^{(j)}_{t_0} \right ) \nonumber\\
			& \qquad \qquad \qquad + \left<(\tau V_t - \tau V_{t_0})^* \big(I + W(\lambda,t)\big)\bu^{(i)}_{t_0}, \bu_{t_0}^{(j)}\right>,\nonumber \\
			&=   \left<(\tau V_t - \tau V_{t_0})^* \big(I + W(\lambda,t)\big)\bu^{(i)}_{t_0}, \bu_{t_0}^{(j)}\right> + \Om\left (\T_{t}\big(I + W(\lambda,t)\big)\bu^{(i)}_{t_0}, \T_{t} \bu^{(j)}_{t_0} \right ), \label{M_la_t}
		\end{align}
		because $(\tau(N + V_{t_0} - \lambda))^* \bu_{t_0}^{(j)} = (\tau(\cN_{t_0} + V_{t_0} - \lambda))^* \bu_{t_0}^{(j)} = 0$.  Now defining 
		\[
		g_t \coloneqq \T_{t}\big(I + W(\lambda,t)\big)\bu^{(i)}_{t_0},
		\]
		since $\big(I + W(\lambda,t)\big)\bu^{(i)}_{t_0} \in \dom \cN_t$ for all $t$ near $t_0$, we have $g_t \in \ran P_t \times \ran Q_t$. Hence $g_t = (P_t \oplus Q_t ) g_t$, and 
		\[
		\dot{g}_{t_0} = (\dot{P}_{t_0} \oplus \dot{Q}_{t_0} ) g_{t_0} + ({P}_{t_0} \oplus {Q}_{t_0} ) \dot{g}_{t_0}.
		\] 
		Moreover, since $W(\la_0,t_0)\bu^{(i)}_{t_0}=0 $, we have $g_{t_0} = \T_{t_0} \bu^{(i)}_{t_0}$. Now differentiating \eqref{M_la_t} with respect to $t$ at $t_0$, we find that (where dot denotes $d/dt$),
		\begin{align*}
			\frac{\p M_{ji}}{\p t}(\lambda_0,t_0) &= \left<(\tau \dot{V}_t)^* \big(I + W(\lambda_0,t)\big)\bu^{(i)}_{t_0}, \bu_{t_0}^{(j)}\right> 
			+ \left<(\tau V_t - \tau V_{t_0})^* \big(I + \p_tW(\lambda_0,t_0)\big)\bu^{(i)}_{t_0}, \bu_{t_0}^{(j)}\right> \Big|_{t=t_0} \\
			& \qquad \qquad\qquad + \Om\left (\dot{g}_{t_0}, \T_{t_0} \bu^{(j)}_{t_0} \right ) + \Om\left (\T_{t_0} \bu^{(i)}_{t_0}, \dot{\T}_{t_0} \bu^{(j)}_{t_0} \right ), \\
			&= \left<\tau \dot{V}_{t_0} \bu^{(i)}_{t_0}, \bu_{t_0}^{(j)}\right> + \Om\left ( (\dot{P}_{t_0} \oplus \dot{Q}_{t_0} ) \T \bu^{(i)}_{t_0}, \T \bu^{(j)}_{t_0} \right ) + \Om\left (\T_{t_0} \bu^{(i)}_{t_0}, \dot{\T}_{t_0} \bu^{(j)}_{t_0} \right ).
		\end{align*}
		For the $\la$ derivative, we first observe that since $W(\lambda,t_0) \bu^{(i)}_{t_0} \in X_{t_0} \subseteq \dom \cN_{t_0} = \dom \cA_{t_0} \times \dom \cB_{t_0}$ for all $\la$ near $\la_0$, we have $\p_\la W(\lambda,t_0) \bu^{(i)}_{t_0} \in \dom \cN_{t_0}$ for all $\la$ near $\la_0$, and hence
		\begin{equation}\label{TlaPQ}
			\T_{t_0} \p_\la W(\lambda_0,t_0) \bu^{(i)}_{t_0}\in \ran P_{t_0}\times \ran Q_{t_0}.
		\end{equation}
		Therefore, we may differentiate \eqref{eq:matrixM} directly because $\p_\la W(\lambda,t_0) \bu^{(i)}_{t_0} \in \dom \cN_{t_0}$, arriving at
		\begin{align*}
			\frac{\p M_{ji}}{\p \lambda}(\lambda_0,t_0) &= \left<-\big(I + W(\lambda_0,t_0)\big)\bu^{(i)}_{t_0} + (\cN_{t_0} + V_{t_0} - \lambda_0) \p_\la W(\lambda_0,t_0) \bu^{(i)}_{t_0}, \tau \bu^{(j)}_{t_0}\right>, \\
			&= - \left<\bu^{(i)}_{t_0}, \tau \bu^{(j)}_{t_0}\right>,
		\end{align*} 
		where we used that $\tau \bu^{(j)}_{t_0} \in \ker(\cN_{t_0}^* + V_{t_0}^* - \lambda_0)$, as required. 
	
	\end{proof}

	\begin{proof}[Proof of \cref{thm:Hadamard}]
		Suppose that $(\la_0,t_0)$ is a simple conjugate point. In this case $M(\la,t)$ defined in \eqref{eq:matrixM} is a scalar, and comparing the expressions in \cref{lemma:cross_forms} and \cref{prop:M}, we have
		\begin{align}\label{M_derivs_simple_case}
			\frac{\p M}{\p t}(\lambda_0,t_0) = \mathfrak{m}_{t_0}(\mathbf{q},\mathbf{q}), \qquad \frac{\p M}{\p \la}(\lambda_0,t_0) = \mathfrak{m}_{\la_0}(\mathbf{q},\mathbf{q}),
		\end{align}
		where $\mathbf{q} = \T\mathbf{u}_{t_0}$. \Cref{thm:Hadamard} now follows from \cref{prop:M} and the implicit function theorem. Namely, in the case that $\p_\la M(\lambda_0,t_0) =\mathfrak{m}_{\la_0}(\mathbf{q},\mathbf{q})\neq 0$, applying the implicit function theorem to $M(\la,t)=0$ implies that the existence of a $C^1$ curve $\la(t)$, defined for $t$ near $t_0$, whose first derivative $\la'(t_0)$ is given by \eqref{hadamard1}. Formula \eqref{hadamard2} in the case when $\p_t M(\lambda_0,t_0) =\mathfrak{m}_{t_0}(\mathbf{q},\mathbf{q})\neq 0$ follows similarly. 
	\end{proof}
	
	\section{Application: Linearised NLS for standing waves on compact star graphs}\label{sec:applications}
	
	In this section we apply our abstract theory to study the example detailed in the introduction, that is, linearisation about a standing wave solution \eqref{standingwave} to the nonlinear Schr\"odinger equation \eqref{NLS} on a compact star graph $\cG$, which satisfies the standing wave equation \eqref{SWE} and vertex conditions \eqref{vertex_conds_1}. After restricting the eigenvalue problem \eqref{Nop1}--\eqref{Nop2} to the sub-graph $\cG_t$, $t\in(0,1]$, and rescaling back to $\cG$, we obtain the $t$-dependent eigenvalue problem \eqref{Nt_EVP_NLS}--\eqref{Nt_stargraphs}.     
	
	Our primary goal will be to prove \cref{thm:spectral_index_thm}. This follows from a homotopy argument and explicit expressions for the crossing forms, which are used to compute local contributions to the Maslov index, see \eqref{Maslov_index_defn}. Our first task will therefore be to compute the crossing forms \eqref{define_t_crossing_form} and \eqref{define_la_crossing_form} in $t$ and $\la$ respectively. As an aside, we also compute Hadamard formulas for the first derivatives of the eigenvalue curves. We emphasise that the Hadamard-type formula for arbitrary $(\la,t)=(\la_0,t_0)$, given in \eqref{ladash}, is inconsequential to the proof of \cref{thm:spectral_index_thm}; however, for the purposes of this paper, we write down this expression to highlight examples of the abstract Hadamard formulas given in \cref{thm:Hadamard,thm:Hadamard_via_resolvents}.
	
	\begin{rem}\label{rem:existence}
		We will not discuss here the issue of the existence of a solution to \eqref{SWE} --  \eqref{vertex_conds_1}. Since the focus will be on the spectrum of the associated linearised operator, instead we assume a solution exists, and use the crossing forms and Hadamard formulas derived in \cref{sec:Hadamard_crossingforms} to study the existence of positive real eigenvalues. 
	\end{rem}

	The $t$-dependent eigenvalue problem \eqref{Nt_EVP_NLS} is described in the notation of \cref{sec:2_setup} as follows. The function spaces are 
	\begin{equation}
		\cH = L^2(\cG), \quad \cH_+= \hatt{H}^2(\cG), \quad \fH = L^2(\p \cG) \cong \R^{2m}.
	\end{equation}
	The Sobolev space of functions vanishing on the boundary $\partial\cG$ together with their derivatives is denoted by
	\begin{equation}\no
		\hatt H^2_0(\cG) \coloneqq \left\{f\in \hatt{H}^2(\cG): \T f=0\right\}.
	\end{equation}
	The minimal symmetric operator with finite and equal deficiency indices acting in $\cH^2 = \left (L^2(\cG)\right )^2$ is then $A = -\p_{xx}, \,\dom(A) = \hatt H_0^2(\cG)$, with maximal adjoint operator $A^* = -\p_{xx},\, \dom(A^*) = \cH_+ = \hatt H^2(\cG)$. 

	The trace operators 
	\begin{align*}
		\tr_t = (\Gamma_{0,t}, \Gamma_{1,t})^\top : \cH_+ \to \fH, \qquad 
		\T_t = \tr_t\oplus \tr_t = [\Gamma_{0,t}, \Gamma_{1,t}]^\top \oplus [\Gamma_{0,t}, \Gamma_{1,t}]^\top: \cH_+^2 \to \fH^4,
	\end{align*} 
	from \cref{hypo:QPGH_families} are given by \eqref{Tt_trt_NLS}, \eqref{Gamma_01_NLS}. Recalling that $\cL\subset \fH^4$ is the Lagrangian plane describing the vertex conditions \eqref{vertex_conds_1}, we denote by $P_\cL$ the ($t$-independent) orthogonal projection in $\R^{4m}$ onto $\cL$. In the notation of \cref{hypo:QPGH_families}, we therefore have $P_t = Q_t = P_\cL$. Thus, the vertex conditions \eqref{vertex_conds_1} state that $\T_t \mathbf{u} \in \cL\oplus \cL$ for $\mathbf{u}=(u,v)^\top \in \cH_+^2$ if
	\begin{equation}\label{vertex_conds_sec_5}
		\T_{t} \mathbf{u} \in \cL\oplus \cL \im 
		\begin{cases}
			u_{1}(0) = u_{2}(0) = \dots = u_{m}(0), \\
			u_{1}(\ell_1) = u_{2}(\ell_2) = \dots = u_{m}(\ell_m)=0, \\
			\sum_{i=1}^{m} u_{i}'(0) = \al u_{1}(0), \quad \al\in \R,
		\end{cases}
	\end{equation} 
	with similar conditions holding for $v$.

	Having established the relevant notation, we proceed with the computation of crossing forms. In what follows, we denote the spectral parameter in the eigenvalue problem \eqref{Nt_EVP_NLS} by $\mu \coloneqq t^2\la$. Let $(\la_0,t_0)$ be a conjugate point with eigenfunction $\mathbf{u}_{t_0}$. For the Lagrangian path $t\mapsto \Upsilon_{\mu_0,t}$, the crossing form with respect to the diagonal subspace $\mathfrak{D}$ introduced in \eqref{define_t_crossing_form} is given by
	\begin{align}\label{t_cross_form_NLS}
		\mathfrak{m}_{t_0}(\mathbf{q},\mathbf{q}) &= \big\langle \tau \dot{V}_{t_0} \mathbf{u}_{t_0}, \mathbf{u}_{t_0}\big\rangle_{\cH^2}  + \Om \left (\T_{t_0}\mathbf{u}_{t_0}, \dot{\T}_{t_0}\mathbf{u}_{t_0}\right ).
	\end{align}
	For the first term in \eqref{t_cross_form_NLS}, using the expression for $V_t$ given above, and writing $\mathbf{u}_{t_0} = (u_{t_0}, v_{t_0}) \in\cH^2$, we have
	\begin{align*}
		 \big\langle \tau \dot{V}_{t_0}& \mathbf{u}_{t_0}, \mathbf{u}_{t_0}\big\rangle_{\cH^2} = -\big \langle (2t_0g(t_0x)+t_0^2g'(t_0x)x)u_{t_0}, u_{t_0}\big \rangle_\cH + \big \langle(2t_0h(t_0x)+t_0^2h'(t_0x)x)v_{t_0}, v_{t_0}\big \rangle_\cH, \\
		 &= - \sum_{i=1}^{m} \int_{0}^{\ell_i}(2t_0g(t_0x)+t_0^2g'(t_0x)x)u_{t_0,i}^2(x) dx + \sum_{i=1}^{m} \int_{0}^{\ell_i}(2t_0h(t_0x)+t_0^2h'(t_0x)x)v^2_{t_0}(x) dx.
	\end{align*} 
	A direct calculation using the equation
	\begin{align*}
		v_{t_0}''(x) + t_0^2 h(t_0 x) v_{t_0}=0
	\end{align*}
	shows that, for each $i=1, \dots, m$, we have
	\begin{align*}
		\de{}{x}\left[\f{1}{t_0^2}x (v'_{t_0,i}(x))^2 + x(v_{t_0,i}(x))^2h(t_0x) - \f{1}{t_0^2}v_{t_0,i}(x)v'_{t_0,i}(x)\right] 
		&= \left[2 h(t_0x) + t_0xh'(t_0x) \right ] v_{t_0,i}^2(x).
	\end{align*}
	Hence
	\begin{align*}
		 \int_{0}^{\ell_i} (2t_0h(t_0x)+t_0^2h'(t_0x)x)v_{t_0,i}^2(x) dx &= \f{\ell_i}{t_0^2} (v'_{t_0,i}(\ell_i))^2 + \f{1}{t_0^2} v_{t_0,i}(0)v_{t_0,i}'(0).
	\end{align*}
	It can be similarly deduced using $u_{t_0}''(x) + t_0^2 g(t_0 x) u_{t_0}=0$ that
	\begin{align*}
		\int_{0}^{\ell_i} (2t_0g(t_0x)+t_0^2g'(t_0x)x)u_{t_0,i}^2(x) dx = \f{\ell_i}{t_0^2} (u'_{t_0,i}(\ell_i))^2 + \f{1}{t_0^2} u_{t_0,i}(0)u_{t_0,i}'(0),
	\end{align*}
	and therefore
	\begin{align*}
		\big\langle \tau \dot{V}_{t_0} \mathbf{u}_{t_0}, \mathbf{u}_{t_0}\big\rangle_{\cH^2} &= - \sum_{i=1}^{m}\left( \f{\ell_i}{t_0^2} (u'_{t_0,i}(\ell_i))^2 + \f{1}{t_0^2} u_{t_0,i}(\ell_i)u_{t_0,i}'(0),\right) + \sum_{i=1}^{m}\left ( \f{\ell_i}{t_0^2} (v'_{t_0,i}(\ell_i))^2 + \f{1}{t_0^2} v_{t_0,i}(0)v_{t_0,i}'(0) \right ).
	\end{align*}
	Using the conditions at the central vertex (the third line in \eqref{vertex_conds_sec_5} and the equivalent statement for $v_{t_0}$), we conclude that 
	\begin{align}
		 \big\langle \tau \dot{V}_{t_0} \mathbf{u}_{t_0}, \mathbf{u}_{t_0}\big\rangle_{\cH^2} &= \f{1}{t_0^2}\sum_{i=1}^m \left \{ \ell_i v'_{t_0,i}(\ell_i)^2 -  \ell_i u'_{t_0,i}(\ell_i)^2 - u_{t_0,i}(0)u_{t_0,i}'(0) + v_{t_0,i}(0)v_{t_0,i}'(0)\right \},  \no \\
		 &= \f{1}{t_0^2}\sum_{i=1}^m \left \{ \ell_i v'_{t_0,i}(\ell_i)^2 -  \ell_i u'_{t_0,i}(\ell_i)^2 \right \} - \f{\al}{t_0^2} \left ( u_{t_0,1}^2(0)- v_{t_0,1}^2(0)\right ). \label{term1}
	\end{align}
	For the second term in \eqref{t_cross_form_NLS}, from the definition of $\T_t$, we have
	\begin{equation*}
		\dot \T_{t_0} \mathbf{u}_{t_0} = (0, \dot \Gamma_{1,t_0} u_{t_0}, 0 , \dot \Gamma_{1,t_0} v_{t_0})^\top,
	\end{equation*}
	where, for example,
	\begin{equation*}
		\dot \Gamma_{1,t_0} u_{t_0} = -\f{1}{t_0^2}\left ( u_{t_0,1}'(0), \dots, u_{t_0,m}'(0), -u_{t_0,1}'(\ell_1),\dots, -u_{t_0,m}'(\ell_m)  \right ).
	\end{equation*}
	Hence, from the definition of $\Om$ in \eqref{Om}, and with $\T_{t_0}\mathbf{u}_{t_0} = (\Gamma_{0,t_0} u_{t_0},\Gamma_{1,t_0} u_{t_0}, \Gamma_{0,t_0} v_{t_0}, \Gamma_{1,t_0} v_{t_0})^\top$, we have
	\begin{align}\label{term2}
		 \Om \left (\T_{t_0}\mathbf{u}_{t_0}, \dot{\T}_{t_0}\mathbf{u}_{t_0}\right )
		 &= - \langle \Gamma_{0,t} u_{t_0} ,\dot \Gamma_{1,t} u_{t_0} \rangle_{\R^{2m}} + \langle \Gamma_{0,t} v_{t_0} ,\dot \Gamma_{1,t} v_{t_0} \rangle_{\R^{2m}}.
	\end{align}
	Again using the vertex condition at the central vertex in \eqref{vertex_conds_sec_5}, we find 
	\begin{align}\label{term3}
		 \langle \Gamma_{0,t} u_{t_0} ,\dot \Gamma_{1,t} u_{t_0} \rangle_{\R^{2m}} &= -\f{1}{t_0^2} \sum_{i=1}^m u_{t_0,i}(0) u'_{t_0,i}(0) = - \f{u_{t_0,1}(0)}{t_0^2} \sum_{i=1}^m u'_{t_0,i}(0) = -\f{\al}{t_0^2} \left ( u_{t_0,1}(0) \right )^2.
	\end{align}
	Similarly, 
	\begin{equation}\label{term4}
		\langle \Gamma_{0t} v_{t_0} ,\dot \Gamma_{1t} v_{t_0} \rangle_{\R^{2m}} = -\f{\al}{t_0^2} \left ( v_{t_0,1}(0) \right )^2.
	\end{equation}
	Combining equations \eqref{term1} and \eqref{term2}--\eqref{term4}, \eqref{t_cross_form_NLS} reduces to 
	\begin{align}\label{t_crossing_form_star_NLS}
		\mathfrak{m}_{t_0}(\mathbf{q},\mathbf{q})&=  \f{1}{t_0^2}\sum_{i=1}^m \ell_i \left ( v'_{t_0,i}(\ell_i)^2 -  u'_{t_0,i}(\ell_i)^2 \right ).
	\end{align} 
	On the other hand, the crossing form for the path $\mu\mapsto \Upsilon_{\mu,t_0}$ with respect to $\mathfrak{D}$ is given by 
	\begin{align}\label{la_cross_form_NLS}
	\mathfrak{m}_{\mu_0}(\mathbf{q},\mathbf{q}) &= -\langle \tau \mathbf{u}_{t_0}, \mathbf{u}_{t_0}\rangle_{\cH^2} = -2\langle u_{t_0}, v_{t_0}\rangle_\cH.
	\end{align}
	Next, we use \cref{thm:Hadamard} to write down Hadamard-type variational formulas for the eigenvalue curves via crossing forms. Under the assumption that $\mu\in\spec(\cN_{t_0}+V_{t_0})$ is simple, we have $\mathfrak{m}_{\mu_0}\neq0$. By \cref{thm:Hadamard} (or indeed \cref{thm:Hadamard_via_resolvents}) it follows that there exists a locally defined $C^1$ curve $\mu=\mu(t)$ satisfying $\mu(t_0) = \mu_0 = t_0^2\la_0$ and such that $\mu'(t_0) = - \mathfrak{m}_{t_0} / \mathfrak{m}_{\mu_0}$. Since $\mu = t^2 \la$, the existence of a $C^1$ curve $\la(t)$ through $(t_0^2 \la_0, t_0)$ immediately follows. By the chain rule we have $\mu'(t) = \de{}{t} (t^2\la(t)) = 2t\la(t) + t^2 \la'(t)$; rearranging yields the desired Hadamard formula for $\la'(t_0)$,
	\begin{equation}\label{ladash}
		\la'(t_0) = \f{ \sum_{i=1}^m \ell_i \left ( v'_{t_0,i}(\ell_i)^2 -  u'_{t_0,i}(\ell_i)^2 \right )- 4\,t_0\la_0\,\langle u_{t_0}, v_{t_0}\rangle_\cH }{ 2\,t_0^2 \,\langle u_{t_0}, v_{t_0}\rangle_\cH }.
	\end{equation}
	In the case when $\mathfrak{m}_{t_0}\neq 0$, by \cref{thm:Hadamard} there exists a locally defined $C^1$ curve $t(\la)$ through $(\la_0,t_0)$, whose derivative $t'(\la_0)$ is given by the reciprocal of the right hand side of \eqref{ladash}. 
	
	We conclude with the proof of \cref{thm:spectral_index_thm}. We first recall a definition of the Maslov index via crossing forms relevant for our analysis, following \cite{RS93}. Let $s$ denote a general parameter, representing either $t$ or $\lambda$, and denote by $s \mapsto \Upsilon_s$ the path obtained from the Lagrangian subspace $\Upsilon_{\la,t}$ by varying one of $t$ or $\la$, and holding the other fixed. 
	For the Lagrangian path $s\mapsto \Upsilon_{s}$, let $s_0$ be a crossing, i.e. $\Upsilon_{s_0}\cap\fD \neq \{0\}$, and suppose $s_0\in[a,b]$ for some interval $[a,b]$ such that $s_0$ is the only crossing in $[a,b]$. Supposing that $s_0$ is \emph{regular}, i.e. $\mathfrak{m}_{s_0}$ is nondegenerate, the Maslov index is defined locally as follows,
	\begin{align}\label{Maslov_index_defn}
		\Mas(\Upsilon_{s},\fD: s\in[a,b]) \coloneqq \begin{cases}
			-n_-(\mathfrak{m}_{s_0}) & s_0 = a, \\
			n_+(\mathfrak{m}_{s_0}) - n_-(\mathfrak{m}_{t_0}) & a<s_0<b, \\
			n_+(\mathfrak{m}_{s_0}) & s_0 =b,
		\end{cases}
	\end{align}
	where $n_+(\mathfrak{m}_{s_0})$, resp. $n_-(\mathfrak{m}_{s_0})$, is the number of positive, resp. negative, squares of the quadratic form $\mathfrak{m}_{s_0}$. The Maslov index of the path $\mathscr{J}\ni s\mapsto \Upsilon_{s}$ is then obtained by summing the Maslov indices of each crossing $s_0\in\mathscr{J}$.  Importantly, the Maslov index is invariant under fixed-endpoint homotopies of the path, and using this property one can extend the definition to \emph{all} Lagrangian paths (i.e. those with non-regular crossings).

	Next, we give an outline of the proof and record some preliminary results. Recall that $\K_{\la,t}$ is the Cauchy data plane, defined in \eqref{cKcFcD}, and $\cL \subset \fH^2 \times \fH^2$ is the Lagrangian plane describing the vertex conditions \eqref{vertex_conds_sec_5}, i.e. $\delta$-type conditions at the central vertex and Dirichlet conditions at the free vertices, and $\mathfrak{D}\subset \fH^4\times \fH^4$ is the diagonal plane in $\fH^4\times \fH^4$. In what follows, we will exploit \eqref{Lag_intersect_spectral_interp_iff}, i.e. that
	\begin{equation}\label{Lag_ints_kernel}
		\ker(\cN +V_t - t^2\la) \neq \{0\} \,\,\,\iff \,\,\, \Upsilon_{\la,t}\cap \mathfrak{D}\neq \{0\}
	\end{equation} 
	(recalling that $t^2\la$ is the spectral parameter). For example, to show the triviality of intersections of Lagrangian planes, we will instead prove the triviality of the kernel of the associated differential operator.
	
	Consider the Lagrangian path 
	\begin{equation}
		\Gamma \ni (\la,t) \mapsto \Upsilon_{\la,t} \coloneqq \K_{\la,t} \oplus (\cL\oplus \cL)
	\end{equation} 
	over the contour $\Gamma\coloneqq \Gamma_1\cup \Gamma_2\cup \Gamma_3\cup \Gamma_4$ in the $\la t$-plane, oriented clockwise, where the segments $\Gamma_i$ are given by
	\begin{align}\label{Gammas_contour}
		\begin{split}
		&\Gamma_1: \la\in[0,\la_\infty], \quad t=\e_0; \qquad  \Gamma_2: \la=0, \quad \,\,\,\,\,t\in[\e_0,1]; \\
		&\Gamma_3: \la\in[0,\la_\infty], \quad t=1;\qquad \hspace{1.4mm} \Gamma_4: \la=\la_\infty, \quad 	t\in[\e_0,1].
		\end{split}
	\end{align}
	Here $0<\e_0\ll 1$ and $\la_\infty$ is taken large enough so that 
	\begin{equation}\label{N_upper_bound}
		\spec(\cN_t + V_t)\cap \{z\in\C : {\rm{Re}}\,z \geq \la_\infty\}=\emptyset \quad \text{ for all} \quad  t\in[\e_0,1].
	\end{equation}
	Indeed such a choice is possible, since for each $t\in[\e_0,1]$, $\cN_t+V_t$ is a bounded perturbation of a skew self-adjoint operator, and therefore its spectrum is contained in the vertical strip $\left \{z\in\C: |{\rm{Re}}(z)| \leq \|V_t\|_{\cB(\cH^2)} \right \}$ around the imaginary axis in the complex plane \cite{Kato}. Taking the supremum of the spectral bounds over $t\in[\e_0,1]$ yields the claim. 
	
	If $\e_0>0$ is small enough, we claim that $\spec(\cN +V_{\e_0})\cap \R = \emptyset$; the proof is similar to that of \cite[Lemma 3.23]{CCLM23}. First, we note that the operators $\cA + F_{\e_0}$ and $\cA + G_{\e_0}$ with domain $\dom(\cA) = \{u\in\cH_+ : \tr_t u \in \cL\}$ are strictly positive. To see this, since $\al>0$ we note that $\spec(\cA)\subset (0,\infty)$, hence,  $\cA\geq C$ for sufficiently small $C>0$. Using this inequality we arrive at 
	\begin{equation}\label{lb1}
		\left \langle (\cA + G_{\e_0}) u, u\right \rangle_{L^2(\cG)} \geq C \| u \|^2_{L^2(\cG)} - \e_0^2 \| g \|_{L^\infty(\cG)} \| u \|^2_{L^2(\cG)}>c\| u \|^2_{L^2(\cG)}
	\end{equation}
	for sufficiently small $c>0$, where $g(x) = (2p+1)\phi(x)+\be$. A similar inequality holds for $\cA + F_{\e_0}$. 
	Next, suppose, by way of contradiction, that there exists $\e_0^2 \la \in \spec(\cN_{\e_0})\cap \R$ with eigenfunction $\mathbf{u}_{t_0} = (u_{t_0}, v_{t_0}) \in \dom(\cN_{\e_0})$. Using the fact that $\cA + G_{\e_0}$ is strictly positive and arguing as in \cite[Lemma 3.21]{CCLM23} we note that the eigenvalue problem \eqref{Nt_EVP_NLS} for $t=\e_0$ small enough is equivalent to the self-adjoint eigenvalue problem
	\begin{equation}\label{SA_EVP}
		(\cA + F_{\e_0})^{1/2}(\cA + G_{\e_0})(\cA + F_{\e_0})^{1/2} w_{\e_0} = -\e_0^4\la^2 w_{\e_0},
	\end{equation}
	where $w_{\e_0} \in \dom (\cA + F_{\e_0})^{1/2}$ is such that $(\cA + F_{\e_0})^{1/2} w_{\e_0} \in \dom (\cA + G_{\e_0})$ and $(\cA + G_{\e_0})(\cA + F_{\e_0})^{1/2} w_{\e_0} \in \dom(\cA + F_{\e_0})$. Applying $\langle \cdot, w_{\e_0}\rangle$ to \eqref{SA_EVP}, using self-adjointness of $(\cA + F_{\e_0})^{1/2}$ and positivity of $\cA + F_{\e_0}, \cA + G_{\e_0}$, for positive constants $C_G, C_F$ we find that
	\begin{equation}
		0 > -\e_0^4\la^2 \|w_{\e_0}\|^2 = \langle (\cA + G_{\e_0})(\cA + F_{\e_0})^{1/2} w_{\e_0}, (\cA + F_{\e_0})^{1/2}w_{\e_0}\rangle \geq C_G C_F \|w_{\e_0}\|^2 >0.
	\end{equation}
	We conclude that 
	\begin{equation}\label{no_crossings_bottom}
		\spec(\cN + V_{\e_0})\cap \R = \emptyset.
	\end{equation}
	\begin{proof}[Proof of \cref{thm:spectral_index_thm}]
	By homotopy invariance and additivity under concatenation we have
	\begin{equation}
		\Mas(\Upsilon_{\la,t}, \fD; \Gamma_1) + \Mas(\Upsilon_{\la,t}, \fD; \Gamma_2) + \Mas(\Upsilon_{\la,t}, \fD; \Gamma_3)+ \Mas(\Upsilon_{\la,t}, \fD; \Gamma_4) = 0.
	\end{equation}
	From the choice of $\la_\infty$ given by \eqref{N_upper_bound}, it follows that $\Mas(\Upsilon_{\la,t}, \fD; \Gamma_4)=0$. In addition, for our choice of $\e_0>0$ small, by \eqref{no_crossings_bottom} we have $\Mas(\Upsilon_{\la,t}, \fD; \Gamma_1)=0$. Again using the concatenation property, it follows that
	\begin{align}\label{Mas_concat_2}
		\Mas(\Upsilon_{0,t}, \fD; t\in[\e_0, 1-\e]) + \mathfrak{c} +  \Mas(\Upsilon_{\la,1}, \fD; \la\in[\e,\la_\infty]) = 0,
	\end{align}
	for some $\e>0$, where 
	\begin{align}\label{define_c}
		\mathfrak{c} \coloneqq  \Mas(\Upsilon_{0,t}, \fD; t\in[1-\e, 1]) + \Mas(\Upsilon_{\la,1}, \cD; \la\in[0,\e]).
	\end{align}
	By choosing $\e>0$ small enough, we can guarantee that $\mathfrak{c}$ represents the contribution to the Maslov index from the corner crossing $(\la,t)=(0,1)$ only. This follows from the fact that $(\la,t)=(0,1)$ is an isolated crossing of both $\Gamma_2$ (due to the nondegeneracy of the crossing form $\mathfrak{m}_{t_0}$) and $\Gamma_3$ (due to $\cN$  having compact resolvent). 
	
	The issue with computing $\mathfrak{c}$ directly is that $(0,1)$ is a  non-regular crossing of $\Gamma_3$; indeed, when $\la_0=0$ the eigenfunction for $\cN+V$ is given by $\mathbf{u}_{t_0} = (0, v_{t_0})$,  hence $\mathfrak{m}_{\la_0} = \left \langle \mathbf{u}_{t_0}, \tau \mathbf{u}_{t_0}\right \rangle=0$. To remedy this, we will homotope the Lagrangian path through this crossing to one with only regular crossings, exploiting the fact that we can readily compute the signature of $\mathfrak{m}_{t_0}$ when $\la=0$. 
	
	To that end, we employ \cref{thm:Hadamard}. In particular, there exists a $C^2$ curve $t=t(\la)$ through the point $(\la,t)=(0,1)$ satisfying $\dot t(0) = 0$ and, by assumption, $\ddot t(0)\neq0$. By \eqref{Lag_ints_kernel}, this curve represents the locus of points through $(\la,t)=(0,1)$ such that $\Upsilon_{\la,t(\la)}\cap \fD \neq\{0\}$. We homotope the path according to whether $t''(0)>0$ or $t''(0)<0$, see \cref{fig:homotope}. 
	
		\begin{figure}
		\centering
		\hspace*{\fill}
		\subcaptionbox{\label{homotope1}} 
		{\includegraphics[width=0.2\textwidth]{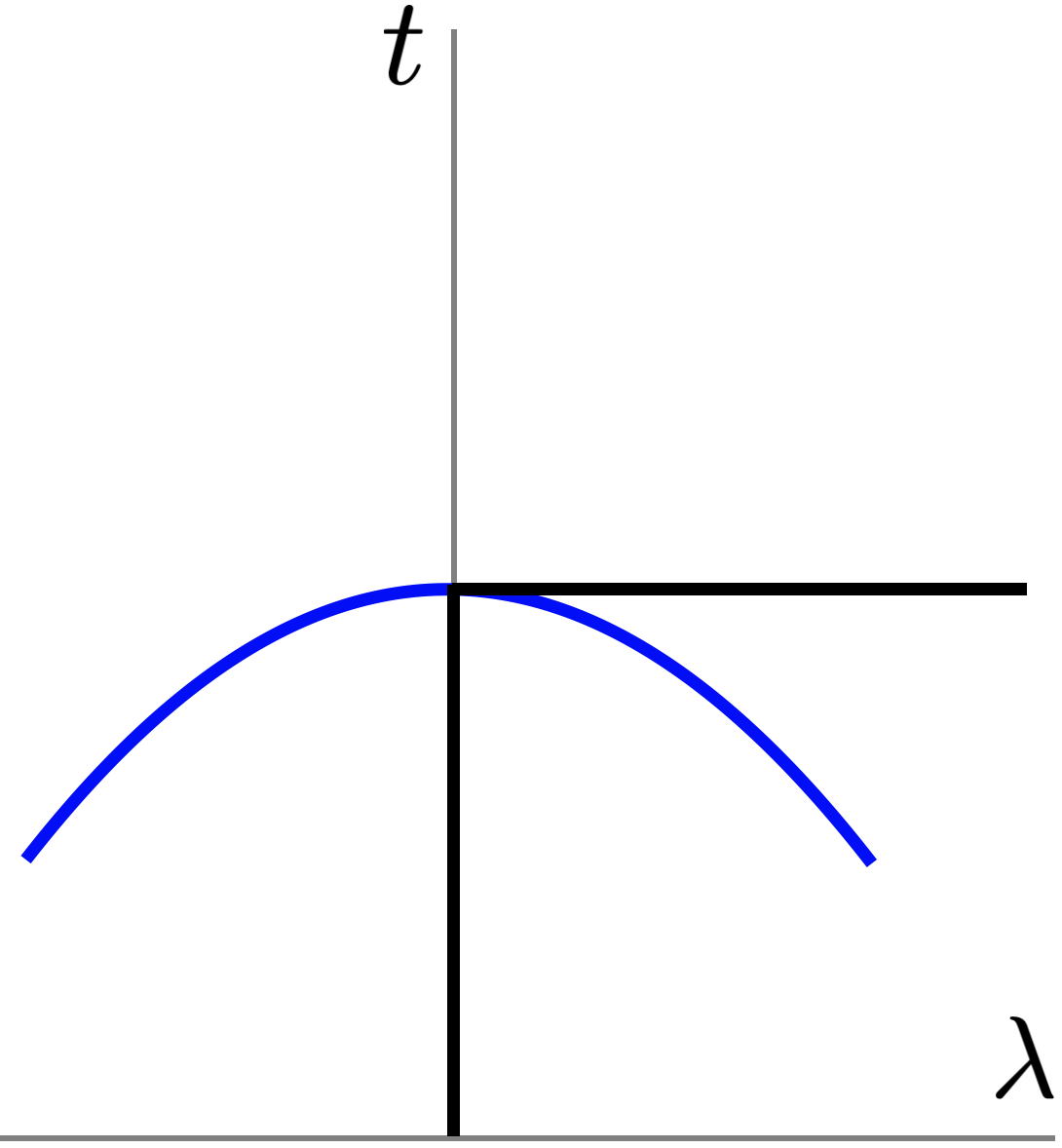}}\hfill 
		\subcaptionbox{\label{homotope2}}
		{\includegraphics[width=0.2\textwidth]{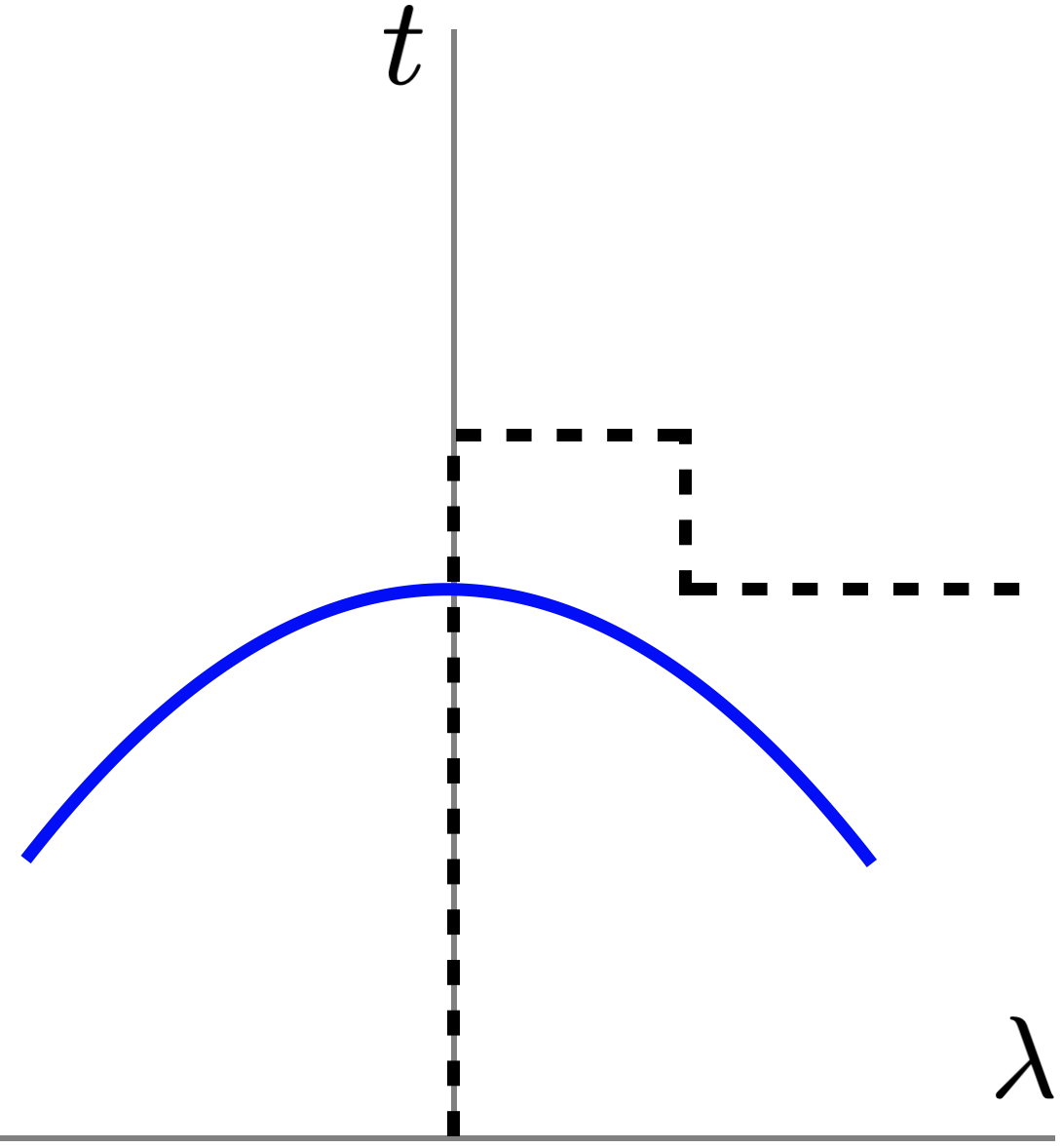}} \hspace{5mm}
		\hspace*{\fill} 
		\subcaptionbox{\label{homotope3}} 
		{\includegraphics[width=0.2\textwidth]{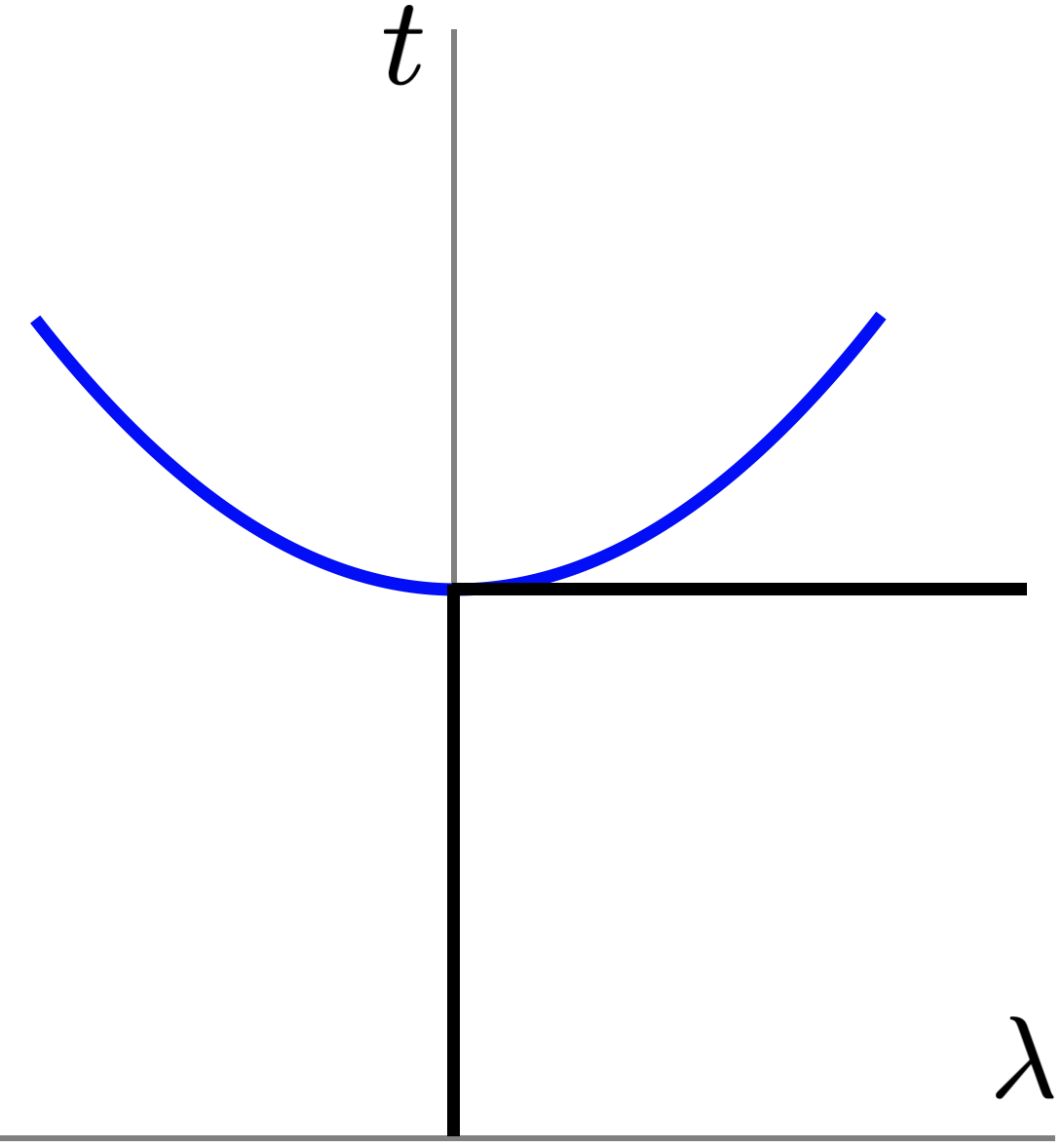}}\hfill 
		\subcaptionbox{\label{homotope4}}
		{\includegraphics[width=0.2\textwidth]{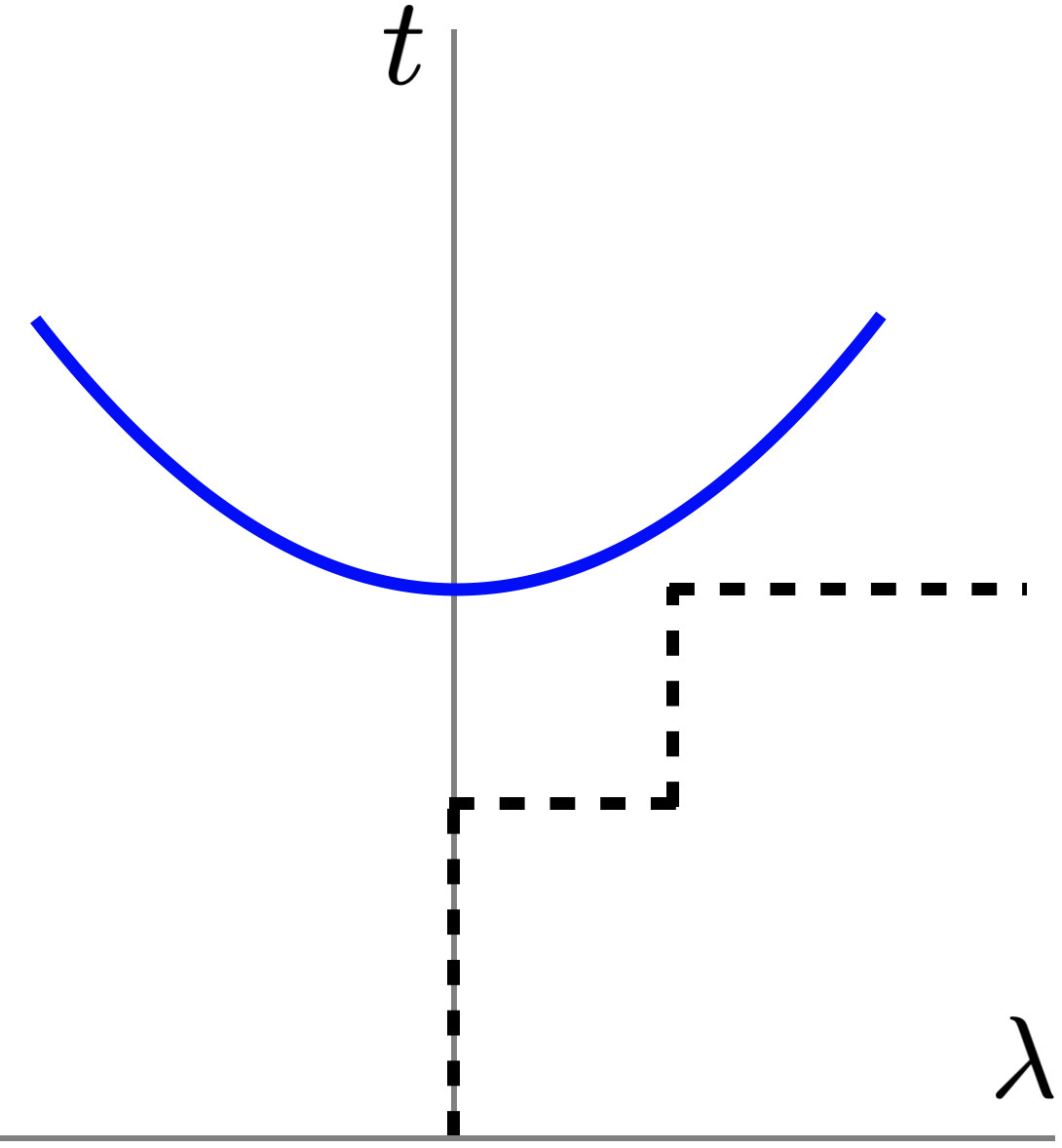}} 
		\hspace*{\fill} 
		\caption{Schematic of the Lagrangian path (solid black) through the top left corner crossing $(\la,t)=(0,1)$ when the eigenvalue curve $t(\la)$ (blue) satisfies (a) $t''(0)<0$ and (c) $t''(0)>0$, and the new path (dashed black) to which we homotope the original path when (b) $t''(0)<0$ and (d) $t''(0)>0$. The new path has only regular crossings.}
		\label{fig:homotope}
	\end{figure}
	
	When $t''(0)<0$, we homotope the path to one having one regular crossing in the $t$ direction when $\la=0$, see \cref{homotope1,homotope2}. (Note that the homotoped path is indeed well-defined for $t>1$, see \cref{rem:t_greater_than_one}.) In this case, using the expression for the crossing form $\mathfrak{m}_{t_0}$ from \eqref{t_crossing_form_star_NLS}, since $\mathbf{u}_{t_0} = (0,v_{t_0})$ we have $n_+(\mathfrak{m}_{t_0})=1$. In this case it follows that $\mathfrak{c}=1$. On the other hand, when $t''(0)>0$ we homotope the path to one having no crossings at all, see \cref{homotope1,homotope2}. It follows that $\mathfrak{c}=0$. In summary, we have
	\begin{align}\label{compute_c}
		\mathfrak{c} = \begin{cases}
			+1  &t''(0) < 0, \\
			0  &t''(0) > 0.
		\end{cases}
	\end{align}
	For the crossings along $\Gamma_3$ (excluding the crossing at $\la=0$), the crossing form \eqref{la_cross_form_NLS} is not signed, and hence the Maslov index does not count the number of crossings. Nonetheless, the absolute value provides a lower bound for the number of crossings, i.e.
	\begin{equation}\label{Gamma3_lowerbound}
		n_+(\cN) \geq \big |\Mas(\Upsilon_{\la,1}, \cD; \la\in[\e,\la_\infty]) \big |.
	\end{equation}
	For the crossings along $\Gamma_2$, suppose that $t_0 \in (\e_0,1-\e]$ is a crossing with $\mathbf{u}_{t_0} \in \ker(\cN_{t_0}+V_{t_0})$. Observe that when $\la=0$ the eigenvalue equations decouple into two independent equations:
	\begin{equation}
		(\cN + V_{t_0})\mathbf{u} = 0 \quad \iff \quad \begin{cases*}
			(\cA + F_{t_0}) v_{t_0} = 0, \\
			(\cA + G_{t_0}) u_{t_0} = 0.
		\end{cases*}
	\end{equation}
	It then follows from the expression for the $t$-crossing form \eqref{t_crossing_form_star_NLS} that: if $t_0$ is such that $0\in\spec(\cA + F_{t_0}) \backslash \spec(\cA + G_{t_0})$ then the quadratic form $\mathfrak{m}_{t_0}$ has $\dim\ker(\cA + F_{t_0})$ positive squares; if $t_0$ is such that $0\in\spec(\cA + G_{t_0}) \backslash \spec(\cA + F_{t_0})$ then $\mathfrak{m}_{t_0}$ has $\dim\ker(\cA + G_{t_0})$ negative squares; and if $t_0$ is such that $0\in\spec(\cA + F_{t_0}) \cap \spec(\cA + G_{t_0})$ then $\mathfrak{m}_{t_0}$ has $\dim\ker(\cA + F_{t_0})$ positive and  $\dim\ker(\cA + G_{t_0})$ negative squares. In summary, for $\delta>0$ small enough we have
	\begin{align}
		%\Mas(\Upsilon_{0,t}, \cD; t\in[t_0-\delta, t_0+\delta]) = 
		\sgn \mathfrak{m}_{t_0} = \begin{cases}
			+\dim\ker(\cA + F_{t_0}) & 0\in\spec(\cA + F_{t_0}) \backslash \spec(\cA + G_{t_0}), \\
			-\dim\ker(\cA + G_{t_0}) & 0\in\spec(\cA + G_{t_0}) \backslash \spec(\cA + F_{t_0}), \\
			 \dim\ker(\cA + F_{t_0}) -\dim\ker(\cA + G_{t_0})  & 0\in \spec(\cA + F_{t_0})\cap \spec(\cA + G_{t_0}).
		\end{cases}
	\end{align}
	Summing the signatures over all crossings $t_0 \in[\e_0, 1-\e] \subset \Gamma_2$ (excluding the corner crossing at $(\la,t)=(0,1)$), and recalling the definitions of $p_c$ and $q_c$ in \eqref{pcqc}, it follows that
	\begin{align}
			\Mas(\Upsilon_{0,t}, \cD; t\in[\e_0, 1-\e]) 
			&= +\sum_{t_0} \dim \ker (\cA+F_{t_0}) -  \sum_{t_0}\dim \ker (\cA+G_{t_0}) = q_c - p_c. \label{mas_gamma2}
	\end{align}
	Now collecting \eqref{Gamma3_lowerbound} and \eqref{Mas_concat_2} together, we obtain
	\begin{equation}
		n_+(\cN) \geq \big |\Mas(\Upsilon_{0,t}, \cD; t\in[\e_0, 1-\e]) + \mathfrak{c} \big |,
	\end{equation}
	and using \eqref{mas_gamma2} and \eqref{compute_c}, we arrive at the inequality in \cref{thm:spectral_index_thm}.
	\end{proof}

	{\bf Conflict of interest.} The authors declare no conflict of interest.
	
	{\bf Data availability.} The data that support the findings of this study are available from the corresponding author upon reasonable request.

	\parskip=0em
	
	\bibliographystyle{alpha}
	\bibliography{mybib}

\end{document}